\newtheorem{assumption}{Assumption}
\newtheorem{lemma}{Lemma}
\newtheorem{theorem}{Theorem}
\newtheorem{definition}{Definition}
\theoremstyle{definition}
\DeclareMathOperator*{\argmax}{arg\,max}
\DeclareMathOperator*{\argmin}{arg\,min}
\title{\Huge{How Does Momentum Help Frank Wolfe?}}
\author{ 
	Bingcong Li*  ~~ Mario Couti$\tilde{\text{n}}$o$^\dagger$ ~~ Georgios B. Giannakis* ~~ Geert Leus$^\dagger$  	\vspace{0.1cm} \\	 
	 * \textit{University of Minnesota - Twin Cities, Minneapolis, MN, USA} \\
	 \texttt{\{lixx5599, georgios\}@umn.edu} \\
	 $\dagger$ \textit{Delft University of Technology, Delft, The Netherlands} \\
	 \texttt{\{m.a.coutinominguez, g.j.t.leus\}@tudelft.nl } 
	 }
\begin{document}

\maketitle
%\vspace{-0.2cm}
\begin{abstract}
We unveil the connections between Frank Wolfe (FW) type algorithms and the momentum in Accelerated Gradient Methods (AGM). On the negative side, these connections illustrate why momentum is unlikely to be effective for FW type algorithms. The encouraging message behind this link, on the other hand, is that momentum is useful for FW on a class of problems. In particular, we prove that a momentum variant of FW, that we term accelerated Frank Wolfe (AFW), converges with a faster rate $\tilde{\cal O}(\frac{1}{k^2})$ on certain constraint sets despite the same ${\cal O}(\frac{1}{k})$ rate as FW on general cases. Given the possible acceleration of AFW at almost no extra cost, it is thus a competitive alternative to FW. Numerical experiments on benchmarked machine learning tasks further validate our theoretical findings.
\end{abstract}

\section{Introduction}

We consider efficient manners to solve the following optimization problem
\begin{align}\label{eq.prob}
	\min_{\mathbf{x} \in {\cal X}} f(\mathbf{x})	
\end{align}
where $f$ is a smooth convex function. The constraint set ${\cal X} \subset \mathbb{R}^d$ is assumed to be convex and compact, and  $d$ is the dimension of the variable $\mathbf{x}$. We denote by $\mathbf{x}^* \in {\cal X}$ a minimizer of \eqref{eq.prob}. Among problems across signal processing, machine learning, and other areas, the constraint set ${\cal X}$ can be structural but difficult or expensive to project onto. Examples include the nuclear norm ball constraint for matrix completion in recommender systems \citep{freund2017} and the total-variation norm ball adopted in image reconstruction tasks \citep{harchaoui2015}. Due to the computational inefficiency of the projection, especially for a large $d$, it is thus impaired the applicability of projected gradient descent (GD) \citep{nesterov2004} and projected Accelerated Gradient Method (AGM) \citep{allen2014,nesterov2015}.

An alternative to GD for solving \eqref{eq.prob} is the Frank Wolfe (FW) method \citep{frank1956,jaggi2013,lacoste2015}, also known as the conditional gradient approach. FW circumvents the projection in GD by first minimizing an affine function, which is the \textit{supporting hyperplane} of $f(\mathbf{x})$ at $\mathbf{x}_k$, over ${\cal X}$ to obtain $\mathbf{v}_{k+1}$, and then updating $\mathbf{x}_{k+1}$ as a convex combination of $\mathbf{x}_k$ and $\mathbf{v}_{k+1}$. When dealing with structural constraints such as nuclear norm balls and total variation norm balls, an efficient implementation manner or even a closed-form solution for computing $\mathbf{v}_{k+1}$ is available \citep{jaggi2013,garber2015}, resulting in reduced computational complexity compared with projection steps. In addition, when initializing well, FW directly promotes low rank (sparse) solutions when the constraint set is a nuclear norm ($\ell_1$ norm) ball \citep{freund2017}. Providing the easiness in implementation and enabling structural solutions, FW is of interest in various applications. Besides those mentioned earlier, other examples encompass structural SVM \citep{lacoste2013block}, video colocation \citep{joulin2014}, and optimal transport \citep{luise2019sinkhorn}, to name a few.

Despite the reduced computational complexity, one drawback of FW is its slow convergence rate. Such a negative aspect is theoretically justified through the established lower bound stating that the number of FW subproblems to be solved is no less than ${\cal O}\big(\frac{1}{\epsilon}\big)$ in order to ensure $f(\mathbf{x}_k) - f(\mathbf{x}^*) \leq \epsilon$ \citep{lan2013complexity,jaggi2013}. FW is hence a lower-bound-matching algorithm. Though theoretically tight in the general case, FW type algorithms can still be improved through either enhancing their empirical performance or focusing on certain subclasses of problems for faster rates. With these directions in mind, we first revisit existing works.

\subsection{Related works}

\textit{FW and its variants.} The prominence of FW has given rise to a vast corpus of literature which goes beyond our key focus on the relation of momentum and FW. Here we only list a few related works as examples. Different manners of step sizes scheduling for FW can be found in \citep{jaggi2013,freund2016new}. Lazy updates using weak linear separation oracle to reduce the run time of FW is studied in \citep{braun2017}. Better empirical performance can be achieved using coefficients other than gradients in the FW subproblem \citep{combettes2020}.

\textit{FW with faster rates.} If additional assumptions are posed on the loss function or the constraints, a faster rate can be achievable by variants of FW. In the classic results \citep{levitin1966,dunn1979}, it is shown that when ${\cal X}$ is strongly convex and the optimal solution is at the boundary of ${\cal X}$, FW converges linearly. Another case where a linear convergence rate can be obtained is when $f$ is strongly convex and the optimal solution lives in the relative interior of the constraint set \citep{guelat1986}. It is established in \citep{garber2015} that when both $f$ and ${\cal X}$ are strongly convex, FW converges at a rate of ${\cal O}(\frac{1}{k^2})$ regardless of the position of the optimal solution. Equipping with ``away steps'', variants of FW are proven to converge linearly on strongly convex problems when ${\cal X}$ is a polytope \citep{lacoste2015}. Works along this line also include e.g., \citep{pedregosa2018}. To improve the memory efficiency of away steps, modifications are further developed in \citep{garber2016linear}. Blending FW with projection steps to enable linear convergence on a strongly convex loss function and a polytope constraint is studied in \citep{braun2018blended}. When $f(\mathbf{x})$ is a twice-differentiable function with locally strong convexity around $\mathbf{x}^*$, a faster rate is obtained on a polytope ${\cal X}$ \citep{bach2020}.

\textit{Nesterov momentum.} After the ${\cal O}(\frac{1}{k^2})$ convergence rate  was established in \citep{nesterov1983, nesterov2004}, the efficiency of Nesterov momentum is proven almost universal; see e.g., the accelerated proximal gradient \citep{beck2009,nesterov2015}, projected AGM \citep{allen2014,nesterov2015} for problems with constraints; accelerated mirror descent \citep{allen2014,krichene2015,nesterov2015}, accelerated coordinate descent \citep{allen2016cd}, and accelerated variance reduction for problems with finite-sum structures \citep{nitanda2014,lin2015}. Parallel to these works, AGM has been also investigated from an ordinary differential equation (ODE) perspective \citep{su2014,krichene2015,zhang2018direct,shi2019}. However, the  efficiency of Nesterov momentum on FW type algorithms is shaded given the lower bound on the number of subproblems \citep{lan2013complexity,jaggi2013}. One idea to introduce momentum into FW is to adopt CGS \citep{lan2016}, where the projection subproblem in the original AGM is substituted by gradient sliding which solves a sequence of FW subproblems. The faster rate ${\cal O}(\frac{1}{k^2})$ is obtained with the price of: i) the requirement of at most ${\cal O}(k)$ FW subproblems in the $k$th iteration; and ii) an inefficient implementation (e.g., the AGM subproblem has to be solved to certain accuracy, and relies on other parameters that are not necessary in FW). We will take a different route to understand \textit{how momentum influences FW type algorithms}. Though the effectiveness of momentum is hindered in the general case, it still enables a faster rate at least for certain constraints.

\subsection{Our contributions}
We unveil a close connection between Nesterov momentum and FW, namely, the momentum update in AGM can be understood from an FW perspective. Exploring this connection, we show that FW type algorithms (partially) benefits from momentum.
 
In particular, we prove that a variant of FW, which we term accelerated Frank Wolfe (AFW) achieves a faster rate $\tilde{\cal O}(\frac{1}{k^2})$ on (some of) active $\ell_p$ norm ball constraints. Compared with CGS \citep{lan2016}, when accelerated, AFW i) \textit{guarantees} that only one FW subproblem is needed per iteration; and ii) relies on neither the diameter of the constraint set nor the smooth parameter of the objective function and thus eases implementation. Though the acceleration is unlikely to be achievable on general problems, the same ${\cal O}(\frac{1}{k})$ convergence rate as FW is still guaranteed by AFW. Given the possible acceleration of AFW, it thus strictly dominates FW. The reason behind the ineffectiveness of momentum can be also explained via the momentum -- FW connection.

The numerical efficiency of AFW is corroborated on two benchmark machine learning tasks. The faster rate $\tilde{\cal O}(\frac{1}{k^2})$ of AFW is validated on binary classification problems with different constraint sets. And for matrix completion, AFW finds low rank solutions with small optimality error faster than FW.

\textbf{Notation}. Bold lowercase letters denote column vectors; $\| \mathbf{x}\|$ stands for the $\ell_2$ norm of a vector $\mathbf{x}$; and $\langle \mathbf{x}, \mathbf{y} \rangle$ denotes the inner product between vectors $\mathbf{x}$ and $\mathbf{y}$.

\section{Preliminaries}

%\vspace{-0.1cm}
%%%%%%%%%%%%%%%%%%%%%%%%%%%%%%%%%%%%%%%%%%%%%%%%%%%%%%%%%%%
\begin{wrapfigure}{r}{0.47\textwidth}
\begin{minipage}[t]{0.45\textwidth}
\vspace{-0.8cm}
\begin{algorithm}[H]
    \caption{FW \citep{frank1956}}\label{alg.fw}
    \begin{algorithmic}[1]
    	\State \textbf{Initialize:} $\mathbf{x}_0\in {\cal X}$
    	\For {$k=0,1,\dots,K-1$}
    		\State $\mathbf{v}_{k+1} = \argmin_{\mathbf{x} \in \cal X} \langle \nabla f(\mathbf{x}_k), \mathbf{x} \rangle$
			\State $\mathbf{x}_{k+1} = (1-\delta_k) \mathbf{x}_k + \delta_k \mathbf{v}_{k+1}$ 
		\EndFor
		\State \textbf{Return:} $\mathbf{x}_K$
	\end{algorithmic}
\end{algorithm}
\end{minipage}
\begin{minipage}[t]{0.45\textwidth}
\vspace{-0.2cm}
\begin{algorithm}[H]
    \caption{AGM \citep{nesterov2004}}\label{alg.agm}
    \begin{algorithmic}[1]
    	\State \textbf{Initialize:} $\mathbf{x}_0$
    	\For {$k=0,1,\dots,K-1$}
    		\State $\mathbf{y}_k = \delta_k \mathbf{v}_k + (1-\delta_k) \mathbf{x}_k$
			\State $\mathbf{x}_{k+1} = \mathbf{y}_k - \frac{1}{L} \nabla f(\mathbf{y}_k)$ 
			\State $\mathbf{v}_{k+1} = \mathbf{v}_k - \frac{\delta_k}{\mu_{k+1}} \nabla f(\mathbf{y}_k)$
		\EndFor
		\State \textbf{Return:} $\mathbf{x}_K$
	\end{algorithmic}
\end{algorithm}
\end{minipage}
\vspace{-1.4cm}
\end{wrapfigure}
%%%%%%%%%%%%%%%%%%%%%%%%%%%%%%%%%%%%%%%%%%%%%%%%%%%%%%%%%

This section briefly reviews FW starting with the assumptions to clarify the class of problems we are focusing on.
\begin{assumption}\label{as.1}
	(Lipschitz Continuous Gradient.)
	The function $f: \mathbb{R}^d\rightarrow \mathbb{R}$ has $L$-Lipchitz continuous gradients; that is, $\|\nabla f(\mathbf{x}) \!-\! \nabla f(\mathbf{y}) \| \leq L \| \mathbf{x}-\mathbf{y} \|, \forall\, \mathbf{x}, \mathbf{y} \in \mathbb{R}^d$.
\end{assumption} 

\begin{assumption}\label{as.2}
	(Convex Objective Function.)
	The function $f: \mathbb{R}^d \rightarrow \mathbb{R}$ is convex; that is, $f(\mathbf{y}) - f(\mathbf{x}) \geq \langle \nabla f(\mathbf{x}), \mathbf{y} - \mathbf{x} \rangle, \forall\, \mathbf{x}, \mathbf{y} \in \mathbb{R}^d$.
\end{assumption} 

\begin{assumption}\label{as.3}
	(Convex Constraint Set.)
	The constraint set $\cal X$ is convex and compact with diameter $D$, that is, $\| \mathbf{x} - \mathbf{y} \| \leq D, \forall \mathbf{x}, \mathbf{y} \in {\cal X}$.
\end{assumption} 
Assumptions \ref{as.1} -- \ref{as.3} are standard for FW type algorithms, and they are assumed to hold true throughout. 

FW is summarized in Alg. \ref{alg.fw}. A subproblem with a linear loss needs to be solved to obtain $\mathbf{v}_{k+1}$ per iteration. This subproblem is also termed as an \textit{FW step} and it admits a geometrical explanation. In particular, $\mathbf{v}_{k+1}$ can be rewritten as 
\begin{align}\label{eq.fw_lb}
	 \mathbf{v}_{k+1}  = \argmin_{\mathbf{x} \in {\cal X}}  f(\mathbf{x}_k) + \langle \nabla f(\mathbf{x}_k), \mathbf{x} - \mathbf{x}_k \rangle.
\end{align}
Noticing that the RHS of \eqref{eq.fw_lb} is a supporting hyperplane of $f(\mathbf{x})$ as $\mathbf{x}_k$, it is thus clear that $ \mathbf{v}_{k+1} $ is a minimizer of this supporting hyperplane over ${\cal X}$. Note also that the supporting hyperplane in \eqref{eq.fw_lb} is also a global lower bound of $f(\mathbf{x})$ due to the convexity of $f$, i.e., $f(\mathbf{x}) \geq  f(\mathbf{x}_k) + \langle \nabla f(\mathbf{x}_k), \mathbf{x} - \mathbf{x}_k \rangle$. Upon minimizing this lower bound in \eqref{eq.fw_lb} to obtain $\mathbf{v}_{k+1}$, $\mathbf{x}_{k+1}$ is updated as a convex combination of $\mathbf{v}_{k+1}$ and $\mathbf{x}_k$ to eliminate the projection. The step size is usually chosen as $\delta_k = \frac{2}{k+2}$. Such a choice eases the implementation since neither line search nor $L$ is needed (recall the step size for GD is $1/L$). Regarding convergence, FW guarantees $f(\mathbf{x}_k) - f(\mathbf{x}^*) = {\cal O}(\frac{LD^2}{k})$.

\section{Connections between momentum and FW}
%%%%%%%%%%%%%%%%%%%%%%%%%%%%%%%%%%%%%%%%%%%%%%%%%%%%%%%%%%%%
%\begin{wrapfigure}{r}{0.45\textwidth}
%\begin{minipage}[t]{0.45\textwidth}
%\vspace{-0.9cm}
%\begin{algorithm}[H]
%    \caption{AGM \citep{nesterov2004}}\label{alg.agm}
%    \begin{algorithmic}[1]
%    	\State \textbf{Initialize:} $\mathbf{x}_0$
%    	\For {$k=0,1,\dots,K-1$}
%    		\State $\mathbf{y}_k = \delta_k \mathbf{v}_k + (1-\delta_k) \mathbf{x}_k$
%			\State $\mathbf{x}_{k+1} = \mathbf{y}_k - \frac{1}{L} \nabla f(\mathbf{y}_k)$ 
%			\State $\mathbf{v}_{k+1} = \mathbf{v}_k - \frac{\delta_k}{\mu_{k+1}} \nabla f(\mathbf{y}_k)$
%		\EndFor
%		\State \textbf{Return:} $\mathbf{x}_K$
%	\end{algorithmic}
%\end{algorithm}
%\end{minipage}
%\vspace{-1.0cm}
%\end{wrapfigure}
%%%%%%%%%%%%%%%%%%%%%%%%%%%%%%%%%%%%%%%%%%%%%%%%%%%%%%%%%%

To bring intuition on how momentum can be helpful for FW type algorithms, we first recap AGM for unconstrained convex problems, i.e., ${\cal X} = \mathbb{R}^d$. Note that the reason for discussing the unconstrained problem here is only for the simplicity of exposition, and one can extend the arguments to constrained cases straightforwardly. AGM \citep{nesterov1983,nesterov2004,allen2014} is summarized in Alg. \ref{alg.agm}. We start this section by characterizing the behavior of $\{ \mathbf{x}_k\}$, $\{ \mathbf{y}_k\}$ and $\{ \mathbf{v}_k\}$ in the next theorem.

\begin{theorem}\label{thm.cvx_yk}
Under Assumptions \ref{as.1} and \ref{as.2}, with $\delta_k= \frac{2}{k+3}$, $\mu_0 = 2L$, and $\mu_{k+1} = (1-\delta_k) \mu_k $, AGM in Alg. \ref{alg.agm} guarantees that
	\begin{align*}
		& f(\mathbf{x}_k) - f(\mathbf{x}^*)  = {\cal O}\Big( \frac{f(\mathbf{x}_0) - f(\mathbf{x}^*) + L \|\mathbf{x}_0 -  \mathbf{x}^* \|^2 }{k^2} \Big), ~\forall k. \\
		& \| \nabla f(\mathbf{y}_k) \|^2  \leq {\cal O} \bigg( \frac{L \big(f(\mathbf{x}_0) -  f(\mathbf{x}^*)  + L \|\mathbf{x}_0 - \mathbf{x}^* \|^2  \big) }{(k+2)^2}  \bigg), \forall k.
	\end{align*}
	In addition, it holds for any $k$ that $\| \mathbf{v}_k - \mathbf{x}^* \|^2 \leq \frac{1}{L} \big( f(\mathbf{x}_0) -  f(\mathbf{x}^*)  + L \|\mathbf{x}_0 - \mathbf{x}^* \|^2  \big)$.
\end{theorem}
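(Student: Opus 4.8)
The plan is to control all three sequences through a single estimate-sequence (Lyapunov) potential
\[
\Phi_k := \big(f(\mathbf{x}_k) - f(\mathbf{x}^*)\big) + \frac{\mu_k}{2}\|\mathbf{v}_k - \mathbf{x}^*\|^2 ,
\]
and to prove the one-step contraction $\Phi_{k+1} \le (1-\delta_k)\Phi_k$. Because the weights are chosen so that $\mu_{k+1}=(1-\delta_k)\mu_k$ with $\mu_0 = 2L$, unrolling the contraction gives $\Phi_k \le \big(\prod_{i=0}^{k-1}(1-\delta_i)\big)\Phi_0 = \frac{\mu_k}{\mu_0}\Phi_0 = \frac{\mu_k}{2L}\Phi_0$. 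With $\delta_i = \frac{2}{i+3}$ the product telescopes to the closed form $\mu_k = \frac{4L}{(k+1)(k+2)}$, and taking $\mathbf{v}_0 = \mathbf{x}_0$ makes $\Phi_0 = f(\mathbf{x}_0)-f(\mathbf{x}^*) + L\|\mathbf{x}_0-\mathbf{x}^*\|^2$, which is exactly the numerator in the statement.

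To establish the contraction I would expand $\|\mathbf{v}_{k+1}-\mathbf{x}^*\|^2$ using the $\mathbf{v}$-update and multiply by $\frac{\mu_{k+1}}{2}$. The cross term is handled by rewriting $\mathbf{v}_k - \mathbf{x}^* = \tfrac1{\delta_k}(\mathbf{y}_k - \mathbf{x}^*) - \tfrac{1-\delta_k}{\delta_k}(\mathbf{x}_k - \mathbf{x}^*)$, which splits $\langle \nabla f(\mathbf{y}_k), \mathbf{v}_k - \mathbf{x}^*\rangle$ into $\langle \nabla f(\mathbf{y}_k), \mathbf{y}_k - \mathbf{x}^*\rangle$ and $\tfrac{1-\delta_k}{\delta_k}\langle \nabla f(\mathbf{y}_k), \mathbf{y}_k - \mathbf{x}_k\rangle$; each is lower-bounded by convexity (Assumption \ref{as.2}) by $f(\mathbf{y}_k)-f(\mathbf{x}^*)$ and $f(\mathbf{y}_k)-f(\mathbf{x}_k)$ respectively. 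After collecting the function-value terms, the whole inequality reduces to requiring
\[
f(\mathbf{x}_{k+1}) \le f(\mathbf{y}_k) - \frac{\delta_k^2}{2\mu_{k+1}}\|\nabla f(\mathbf{y}_k)\|^2 ,
\]
which is implied by the smoothness descent lemma $f(\mathbf{x}_{k+1}) \le f(\mathbf{y}_k) - \frac{1}{2L}\|\nabla f(\mathbf{y}_k)\|^2$ (from Assumption \ref{as.1} and $\mathbf{x}_{k+1}=\mathbf{y}_k-\frac1L\nabla f(\mathbf{y}_k)$) as soon as $\frac{\delta_k^2}{\mu_{k+1}}\le \frac1L$, i.e. $L\delta_k^2\le\mu_{k+1}$; the chosen parameters satisfy $L\delta_k^2=\frac{4L}{(k+3)^2}\le\frac{4L}{(k+2)(k+3)}=\mu_{k+1}$. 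I expect this bookkeeping --- choosing the right decomposition of the cross term and matching the coefficient of $\|\nabla f(\mathbf{y}_k)\|^2$ to what the descent lemma supplies --- to be the main obstacle; everything else is routine.

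Once the contraction is in place the first and third claims fall out immediately. Since $f(\mathbf{x}_k)-f(\mathbf{x}^*)\le \Phi_k \le \frac{\mu_k}{2L}\Phi_0 = \frac{2\Phi_0}{(k+1)(k+2)}$, the first bound follows. For the third, $\frac{\mu_k}{2}\|\mathbf{v}_k-\mathbf{x}^*\|^2\le\Phi_k\le\frac{\mu_k}{2L}\Phi_0$, and cancelling $\mu_k$ yields the $k$-independent estimate $\|\mathbf{v}_k-\mathbf{x}^*\|^2\le\frac{1}{L}\Phi_0$.

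Finally, the gradient bound is essentially free given the third claim. From the $\mathbf{v}$-update, $\nabla f(\mathbf{y}_k)=\frac{\mu_{k+1}}{\delta_k}(\mathbf{v}_k-\mathbf{v}_{k+1})$, so by the triangle inequality and the uniform distance bound,
\[
\|\nabla f(\mathbf{y}_k)\| \le \frac{\mu_{k+1}}{\delta_k}\big(\|\mathbf{v}_k-\mathbf{x}^*\|+\|\mathbf{v}_{k+1}-\mathbf{x}^*\|\big) \le \frac{\mu_{k+1}}{\delta_k}\,\frac{2\sqrt{\Phi_0}}{\sqrt{L}} .
\]
Since $\frac{\mu_{k+1}}{\delta_k}=\frac{2L}{k+2}$, squaring gives $\|\nabla f(\mathbf{y}_k)\|^2\le\frac{16L\Phi_0}{(k+2)^2}$, the second claim. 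It is worth emphasizing that this last bound does not require a second potential or any monotonicity of $\{f(\mathbf{x}_k)\}$ (which AGM lacks); it rides entirely on the uniform boundedness of $\|\mathbf{v}_k-\mathbf{x}^*\|$ together with the explicit shrinkage of $\mu_{k+1}/\delta_k$.
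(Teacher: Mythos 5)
Your proof is correct, but it takes a genuinely different route from the paper's. The paper does not prove the one-step contraction at all: it imports Nesterov's estimate-sequence machinery wholesale --- the quadratic surrogates $\Phi_k(\mathbf{x}) = \Phi_k^* + \frac{\mu_k}{2}\|\mathbf{x}-\mathbf{v}_k\|^2$, the facts $f(\mathbf{x}_k)\le \Phi_k^*$ and $\Phi_k(\mathbf{x}) - f(\mathbf{x}) \le \lambda_k\big(\Phi_0(\mathbf{x})-f(\mathbf{x})\big)$, and the ${\cal O}(1/k^2)$ rate on $f(\mathbf{x}_k)$ are all cited from Nesterov (2004) --- and then simply evaluates the estimate-sequence inequality at $\mathbf{x}=\mathbf{x}^*$ to obtain $\|\mathbf{v}_k-\mathbf{x}^*\|^2 \le \frac{1}{L}\big(\Phi_0(\mathbf{x}^*)-f(\mathbf{x}^*)\big)$. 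Your scalar potential $f(\mathbf{x}_k)-f(\mathbf{x}^*)+\frac{\mu_k}{2}\|\mathbf{v}_k-\mathbf{x}^*\|^2$ is exactly that estimate sequence specialized to the single point $\mathbf{x}^*$ (combined with $f(\mathbf{x}_k)\le\Phi_k^*$), so your contraction $\Phi_{k+1}\le(1-\delta_k)\Phi_k$ re-proves from scratch what the paper treats as a black box; your bookkeeping checks out --- the decomposition $\mathbf{v}_k-\mathbf{x}^* = (\mathbf{y}_k-\mathbf{x}^*)+\frac{1-\delta_k}{\delta_k}(\mathbf{y}_k-\mathbf{x}_k)$, the two convexity bounds, and the parameter verification $L\delta_k^2=\frac{4L}{(k+3)^2}\le\frac{4L}{(k+2)(k+3)}=\mu_{k+1}$ are all valid --- and as a bonus your route also establishes the first claim, which the paper only quotes. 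From the uniform bound onward the two arguments coincide: both use $\nabla f(\mathbf{y}_k)=\frac{\mu_{k+1}}{\delta_k}(\mathbf{v}_k-\mathbf{v}_{k+1})$ with $\frac{\mu_{k+1}}{\delta_k}=\frac{2L}{k+2}$ and the triangle inequality through $\mathbf{x}^*$, arriving at the identical constant $\|\nabla f(\mathbf{y}_k)\|^2\le\frac{16L\Phi_0}{(k+2)^2}$. What your approach buys is self-containedness and a single Lyapunov invariant; what the paper's buys is the functional inequality at every $\mathbf{x}$ (not just $\mathbf{x}^*$) and uniformity with the ES formalism it reuses for the AFW analysis in Section \ref{sec.afw}. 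Two small points you handle correctly but should keep explicit: the initialization $\mathbf{v}_0=\mathbf{x}_0$ is genuinely needed (the paper invokes it only implicitly), and dropping $f(\mathbf{x}_k)-f(\mathbf{x}^*)\ge 0$ when extracting the third claim is legitimate only because this theorem concerns the unconstrained setting, where $\mathbf{x}^*$ is a global minimizer.
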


Theorem \ref{thm.cvx_yk} shows that $\| \nabla f(\mathbf{y}_k) \|^2 = {\cal O}(\frac{1}{k^2})$, which implies that $\mathbf{y}_k$ also converges to a minimizer as $k \rightarrow \infty$. Through the increasing step size $\frac{\delta_k}{\mu_{k+1}} = {\cal O}(\frac{k}{L})$, the update of $\mathbf{v}_k$ stays in the ball centered at $\mathbf{x}^*$ with radius depending on both $\mathbf{x}^*$ and $\mathbf{x}_0$.

One observation of AGM is that by substituting Line 6 in Alg. \ref{alg.agm} with $\mathbf{v}_{k+1} = \mathbf{x}_{k+1}$, the modified algorithm boils down to gradient descent. Hence, it is clear that the key behind AGM’s acceleration is $\mathbf{v}_k$ and the way it is updated. We contend that the $\mathbf{v}_{k+1}$ is obtained by minimizing an approximated lower bound of $f(\mathbf{x})$ formed as the summation of a supporting hyperplane at $\mathbf{y}_k$ and a regularizer. To see this, one can rewrite Line 6 of AGM as
\begin{align}\label{eq.v_rewrite}
	 \mathbf{v}_{k+1}  = \argmin_{\mathbf{x}}  \underbrace{ f(\mathbf{y}_k) + \langle  \nabla f(\mathbf{y}_k), \mathbf{x}- \mathbf{y}_k \rangle}_{\text{supporting hyperplane}} +  \underbrace{ \frac{\mu_{k\!+\!1}}{2 \delta_k}\| \mathbf{x} - \mathbf{v}_k\|^2}_{\text{regularizer}}
\end{align}
where the linear part is the supporting hyperplane, and $\frac{\mu_{k+1}}{\delta_k} = {\cal O}(\frac{L}{k})$. As $k$ increases, the impact of the regularizer $\frac{\mu_{k+1}}{2 \delta_k}\| \mathbf{x} - \mathbf{v}_k\|^2$ in \eqref{eq.v_rewrite} will become limited. Thus the RHS can be viewed as an approximated lower bound of $f(\mathbf{x})$. Regarding the reasons to put a regularizer after the supporting hyperplane, it first guarantees the minimizer \textit{exists} since directly minimize the supporting hyperplane over $\mathbb{R}^d$ yields no solution. In addition, $\mathbf{v}_{k+1}$ is ensured to be \textit{unique} because the RHS of \eqref{eq.v_rewrite} is strongly convex thanks to the regularizer. Since $\mathbf{v}_{k+1} $ minimizes an approximated lower bound of $f(\mathbf{x})$, it can be used to estimate $f(\mathbf{x}^*)$. We explain in Appendix \ref{apdx.agm_lb} that $f(\mathbf{y}_k) + \langle  \nabla f(\mathbf{y}_k), \mathbf{v}_{k+1}- \mathbf{y}_k \rangle$ approximates $f(\mathbf{x}^*)$. Consequently, one can obtain an estimated suboptimality gap using $f(\mathbf{x}_{k+1}) - f(\mathbf{y}_k) - \langle \nabla f(\mathbf{y}_k), \mathbf{v}_{k+1} - \mathbf{y}_k \rangle $.

\begin{wrapfigure}{r}{0.4\textwidth}
\begin{minipage}{0.39\textwidth}
	\centering
	\vspace{-0.55cm}
	\begin{tabular}{c}
		\hspace{-0.3cm}
		\includegraphics[width=.98\textwidth]{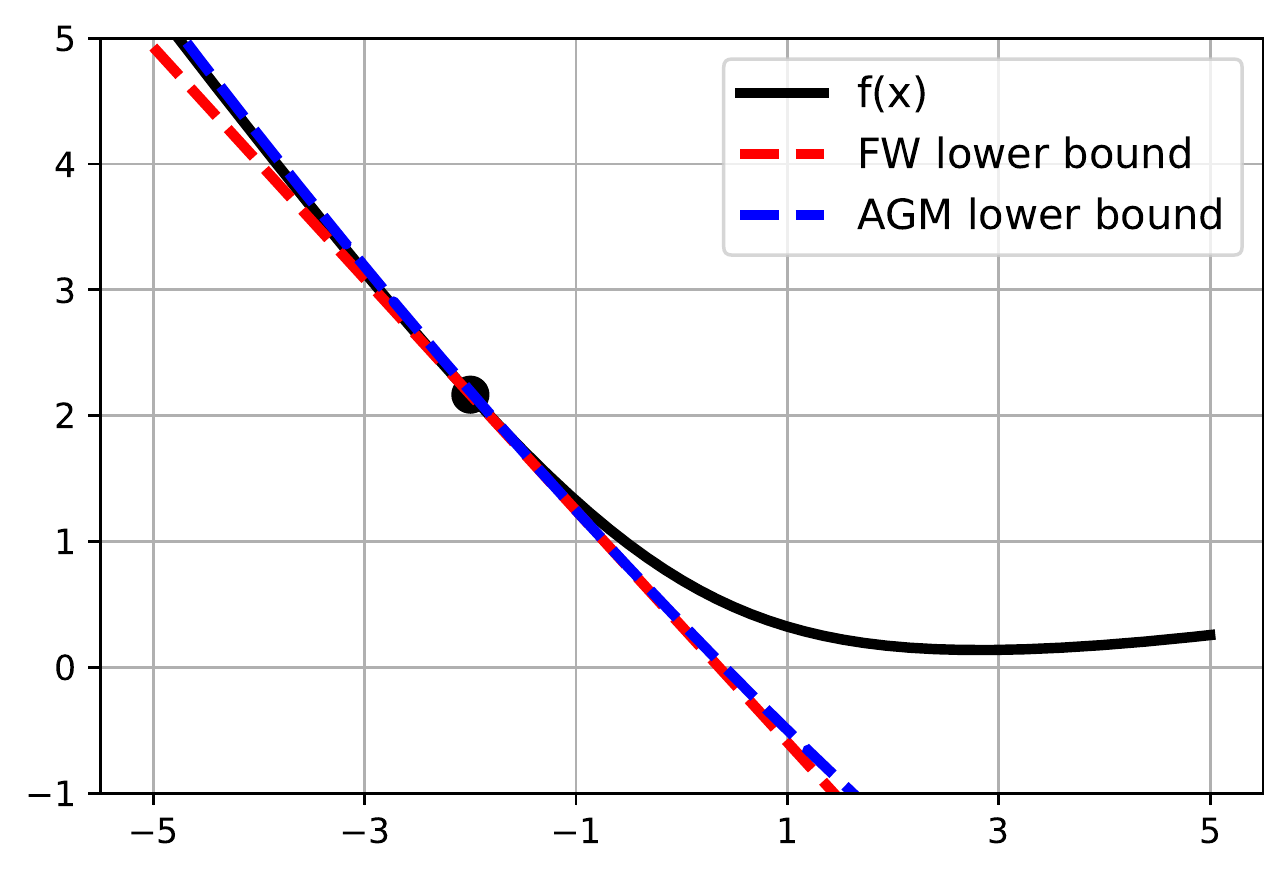}
	\end{tabular}
	\vspace{-0.4cm}
	\caption{Similarity between the RHS of \eqref{eq.fw_lb} and \eqref{eq.v_rewrite}.} 
	 \label{fig.lb}
\end{minipage}
\vspace{-0.5cm}
\end{wrapfigure}

\textbf{Momentum $\mathbf{v}_k$ update as an FW step.} It is observed that the $\mathbf{v}_{k+1}$ in both FW and AGM (cf. \eqref{eq.fw_lb} and \eqref{eq.v_rewrite}) are obtained by minimizing an (approximated) lower bound of $f(\mathbf{x})$, where the only difference lies on whether a regularizer with decreasing weights is utilized. 
%Comparing the mechanism behind the $\mathbf{v}_{k+1}$ update for FW in \eqref{eq.fw_lb} and for AGM in \eqref{eq.v_rewrite}, the difference lies only on the fact that \eqref{eq.v_rewrite} utilizes a regularizer with a decreasing weight. 
The similarity between the RHS of \eqref{eq.fw_lb} and \eqref{eq.v_rewrite} will be amplified when $k$ is large; see Fig. \ref{fig.lb} for a graphical illustration on how \eqref{eq.v_rewrite} approaches to an affine function. In other words, the momentum update in \eqref{eq.v_rewrite} becomes similar to an FW step for a large $k$. In addition, there are also several other connections.

\textbf{Connection 1.} The $\mathbf{v}_{k+1}$ update via \eqref{eq.v_rewrite} is equivalent to
\begin{align}\label{eq.rerere}
	\mathbf{v}_{k+1}  = \argmin_{\mathbf{v}\in {\cal V}_k} & \langle  \nabla f(\mathbf{y}_k), \mathbf{v}-\mathbf{y}_k \rangle  
\end{align}
for ${\cal V}_k:= \{\mathbf{v}| \| \mathbf{v}- \mathbf{v}_k \|^2 \leq r_k \}$ with $r_k$ denoting the time-varying radius of the norm ball. Clearly, $r_k$ depends on $\frac{\mu_{k\!+\!1}}{2 \delta_k}$, and it is upper bounded by $\frac{2}{L} \big( f(\mathbf{x}_0) -  f(\mathbf{x}^*)  + L \|\mathbf{x}_0 - \mathbf{x}^* \|^2  \big)$ according to Theorem \ref{thm.cvx_yk}. By rewriting \eqref{eq.v_rewrite} in its constrained form \eqref{eq.rerere}, it can be readily recognized that for unconstrained problems \textit{Nesterov momentum can be obtained via FW steps} with \textit{time-varying} constraint sets.

\textbf{Connection 2.} Recall that in AGM, $\mathbf{v}_{k+1}$ obtained via \eqref{eq.v_rewrite} is used to construct an approximation of $f(\mathbf{x}^*)$, which is $f(\mathbf{y}_k) + \langle \nabla f(\mathbf{y}_k), \mathbf{v}_{k+1} - \mathbf{y}_k \rangle$. When a compact ${\cal X}$ is present, directly minimizing the supporting hyperplane $f(\mathbf{y}_k)  + \langle \nabla f(\mathbf{y}_k), \mathbf{x} - \mathbf{y}_k \rangle$ over ${\cal X}$ also yields an estimate of $f(\mathbf{x}^*)$. Note that the latter is exactly an FW step. In addition, the FW step in Alg. \ref{alg.fw} also results in a suboptimality gap (known as FW gap; see e.g., \citep{jaggi2013}), which is in line with the role of $\mathbf{v}_k$ in AGM. In a nutshell, both FW step and momentum update in AGM result in an estimated suboptimality gap.

\textbf{Connection 3.} Connections between momentum and FW go beyond convexity. Due to space limitation, we discuss in Appendix \ref{sec.sc_agm_fw} that AGM for strongly convex problems updates its momentum using exactly the same idea of FW, that is, both obtain a minimizer of a lower bound of $f(\mathbf{x})$, and then perform an update through a convex combination.

These links and similarities between momentum and FW naturally lead us to explore their connections, and see how momentum influences FW.

\section{FW benefits from momentum}\label{sec.afw}

%%%%%%%%%%%%%%%%%%%%%%%%%%%%%%%%%%%%%%%%%%%%%%%%%%%%%%%%%%%
\begin{wrapfigure}{r}{0.43\textwidth}
\begin{minipage}[t]{0.43\textwidth}
\vspace{-0.8cm}%%%%%%%%%%%%%%%%
\begin{algorithm}[H]
    \caption{AFW}\label{alg.afw}
    \begin{algorithmic}[1]
    	\State \textbf{Initialize:} $\mathbf{x}_0\in {\cal X}$, $\bm{\theta}_0 = \mathbf{0}$
    	\For {$k=0,1,\dots,K-1$}
    		\State $\mathbf{y}_k = (1-\delta_k) \mathbf{x}_k + \delta_k \mathbf{v}_k$
    		\State $\bm{\theta}_{k+1} = (1-\delta_k) \bm{\theta}_k + \delta_k \nabla f(\mathbf{y}_k)$
    		\State $\mathbf{v}_{k+1} = \argmin_{\mathbf{x} \in \cal X} \langle  \bm{\theta}_{k+1}, \mathbf{x} \rangle$
			\State $\mathbf{x}_{k+1} = (1-\delta_k) \mathbf{x}_k + \delta_k \mathbf{v}_{k+1}$ 
		\EndFor
		\State \textbf{Return:} $\mathbf{x}_K$
	\end{algorithmic}
\end{algorithm}
\end{minipage}
\vspace{-0.6cm}
\end{wrapfigure}
%%%%%%%%%%%%%%%%%%%%%%%%%%%%%%%%%%%%%%%%%%%%%%%%%%%%%%%%%

In this section we show that the momentum is indeed beneficial for FW by proving that it is effective at least on certain constraint sets. Specifically, we will focus on the accelerated Frank Wolfe (AFW) summarized in Alg. \ref{alg.afw}, and analyze its convergence rate. Since we will see later that $\delta_k = \frac{2}{k+3} \in (0,1), \forall k$, for which $\mathbf{y}_k$, $\mathbf{v}_k$ and $\mathbf{x}_k$ lie in ${\cal X}$ for all $k$, AFW is projection free. Albeit rarely, it is safe to choose $\mathbf{v}_{k+1} = \mathbf{v}_k$, and proceed when $\bm{\theta}_{k+1} = \bm{0}$. Note that the $\mathbf{x}_{k+1}$ update in AFW is slightly different with that of AGM. This is because AGM guarantees $f(\mathbf{x}_{k+1}) \leq f(\mathbf{y}_k), ~\forall k,$ taking advantage of the projection step. However, the same guarantee is difficult to be replicated in a projection-free algorithm.

The key to AFW is the $\mathbf{v}_{k+1}$ update, which plays the role of momentum. To see this, if one unrolls $\bm{\theta}_{k+1}$ (cf. \eqref{eq.theta_avg} in Appendix) and plugs it into Line 5 of Alg. \ref{alg.afw}, $\mathbf{v}_{k+1}$ can be equivalently rewritten as
\begin{align}\label{eq.afw_lb}
	\mathbf{v}_{k+1} = \argmin_{\mathbf{x} \in {\cal X}} \sum_{\tau=0}^k w_\tau \big[ f(\mathbf{y}_\tau) + \langle \nabla f(\mathbf{y}_\tau), \mathbf{x} -  \mathbf{y}_\tau \rangle \big]
\end{align}
where $w_\tau = \delta_\tau  \prod_{j = \tau+1}^k   (1-\delta_j)$ and $\sum_{\tau=0}^k w_\tau \approx 1$ (the exact value of the sum depends on the choice of $\delta_\tau$). Note that $f(\mathbf{y}_\tau) + \langle \nabla f(\mathbf{y}_\tau), \mathbf{x} - \mathbf{y}_\tau \rangle$ is a supporting hyperplane of $f(\mathbf{x})$ at $\mathbf{y}_\tau$, hence the RHS of \eqref{eq.afw_lb} is a lower bound for $f(\mathbf{x})$ constructed through a weighted average of supporting hyperplanes at $\{\mathbf{y}_\tau\}$. In other words, $\mathbf{v}_{k+1}$ is a minimizer of a lower bound of $f(\mathbf{x})$, hence it is in line with the role of momentum. However, the momentum in AFW differs from AGM in two aspects. First, instead of relying on $\nabla f(\mathbf{y}_k)$, the update of $\mathbf{v}_{k+1}$ utilizes coefficient $\bm{\theta}_{k+1}$, which is (roughly) a weighted average of past gradients $\{\nabla f(\mathbf{y}_\tau)\}_{\tau=1}^k$ with more weight placed on recent ones. 
 The second difference on the $\mathbf{v}_{k+1}$ update with AGM is whether a regularizer is used. As a consequence of the non-regularized lower bound \eqref{eq.afw_lb}, its minimizer is \textit{not} guaranteed to be unique. A simple example is to consider the $i$th entry $[\bm{\theta}_{k+1}]_i=0$. The $i$th entry $[\mathbf{v}_{k+1}]_i$ can then be chosen arbitrarily as long as $\mathbf{v}_{k+1} \in {\cal X}$. This subtle difference leads to a significant gap between the performance of AFW and AGM, that is, AFW cannot achieve acceleration on general problems, as will be illustrated shortly. However, we confirm that momentum is still helpful since it is effective on a class of problems.

\subsection{AFW convergence for general problems}\label{sec.afw_gen}

The analysis of AFW relies on a tool known as estimate sequence (ES) introduced by \citep{nesterov2004}. ES is useful to analyze gradient based algorithms; see e.g., \citep{nitanda2014,lin2015,kulunchakov2019,li2019bb}. Formally, ES is defined as follows.
\begin{definition}\label{def.es}
	(Estimate Sequence.)
	A tuple $\big( \{\Phi_k(\mathbf{x}) \}_{k=0}^\infty,\{ \lambda_k \}_{k=0}^\infty \big)$ is called an estimate sequence of function $f(\mathbf{x})$ if $\lim_{k\rightarrow \infty} \lambda_k = 0$, and for any $\mathbf{x}\in\mathbb{R}^d$ we have
	\begin{align*}
		\Phi_k (\mathbf{x}) \leq (1 - \lambda_k)	 f(\mathbf{x}) + \lambda_k \Phi_0 (\mathbf{x}).
	\end{align*}
\end{definition}

ES is generally not unique and different constructions can be used to design different algorithms. To highlight our analysis technique, recall that quadratic surrogate functions $\{\Phi_k(\mathbf{x}) \}$ are used for the derivation of AGM \citep{nesterov2004} (or see \eqref{eq.phi} in Appendix). Different from AGM, and taking advantage of the compact constraint set, here we consider \textit{linear} surrogate functions for AFW
\begin{subequations}\label{eq.phi_fw}
\begin{align}
	~~~\Phi_0(\mathbf{x}) & \equiv f(\mathbf{x}_0) \\
	~~~\Phi_{k+1} (\mathbf{x}) & = (1- \delta_k) \Phi_k (\mathbf{x}) + \delta_k \Big[ f(\mathbf{y}_k) + \big\langle  \nabla f(\mathbf{y}_k), \mathbf{x} - \mathbf{y}_k  \big\rangle \Big], \forall\, k\geq 0. \label{eq.6b}
\end{align}
\end{subequations}

Evidenced by the terms in the bracket of \eqref{eq.6b}, i.e., it is a supporting hyperplane of $f(\mathbf{x})$, $\Phi_{k+1} (\mathbf{x}) $ is an approximated lower bound of $f(\mathbf{x})$ constructed by weighting the supporting hyperplanes at $\{ \mathbf{y}_\tau \}_{\tau=0}^k$. Next, we show that \eqref{eq.phi_fw} together with proper $\{ \lambda_k \}$ forms an ES for $f$. Through the ES based proof, it is also revealed that the link between the momentum in AGM and the FW step is also in the technical proof level.

\begin{lemma}\label{lemma.es_fw}
With $\lambda_0 = 1$ and $\lambda_k = \lambda_{k-1}(1-\delta_{k-1})$, the tuple $\big( \{\Phi_k(\mathbf{x}) \}_{k=0}^\infty, \{ \lambda_k \}_{k=0}^\infty \big)$ in \eqref{eq.phi_fw} is an ES of $f(\mathbf{x})$.
\end{lemma}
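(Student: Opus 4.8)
The plan is to verify directly the two requirements of Definition \ref{def.es}: that $\lambda_k \to 0$, and that the lower-bound inequality $\Phi_k(\mathbf{x}) \le (1-\lambda_k)f(\mathbf{x}) + \lambda_k \Phi_0(\mathbf{x})$ holds for every $\mathbf{x}$ and every $k$. The recursion $\lambda_{k+1} = (1-\delta_k)\lambda_k$ with $\lambda_0 = 1$ immediately gives the closed form $\lambda_k = \prod_{j=0}^{k-1}(1-\delta_j)$. Since $\delta_k = \frac{2}{k+3}$ we have $\sum_j \delta_j = \infty$, so the product tends to $0$; in fact it telescopes to $\lambda_k = \frac{2}{(k+1)(k+2)}$, which settles the first requirement and can be reused later when reading off the $\tilde{\cal O}(1/k^2)$ rate.

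For the second requirement I would argue by induction on $k$, exploiting the fact that the surrogates $\Phi_k$ obey the same convex-combination recursion as $\lambda_k$. The base case $k=0$ is immediate: since $\lambda_0 = 1$, the claimed inequality reads $\Phi_0(\mathbf{x}) \le \Phi_0(\mathbf{x})$, which holds with equality. For the inductive step, assume the bound at index $k$. Substituting the definition \eqref{eq.6b} of $\Phi_{k+1}$ and invoking convexity (Assumption \ref{as.2}) to replace the supporting hyperplane $f(\mathbf{y}_k) + \langle \nabla f(\mathbf{y}_k), \mathbf{x} - \mathbf{y}_k\rangle$ by the larger quantity $f(\mathbf{x})$ yields $\Phi_{k+1}(\mathbf{x}) \le (1-\delta_k)\Phi_k(\mathbf{x}) + \delta_k f(\mathbf{x})$.

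It then remains to plug in the inductive hypothesis for $\Phi_k(\mathbf{x})$ and collect terms. The coefficient of $\Phi_0(\mathbf{x})$ becomes $(1-\delta_k)\lambda_k$, which equals $\lambda_{k+1}$ by construction, while the coefficient of $f(\mathbf{x})$ is $(1-\delta_k)(1-\lambda_k) + \delta_k = 1 - (1-\delta_k)\lambda_k = 1 - \lambda_{k+1}$. This reproduces exactly the desired inequality at index $k+1$ and closes the induction.

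There is no genuinely hard step here: the argument is a one-line convexity bound followed by bookkeeping of the two scalar coefficients, which is precisely why replacing the quadratic surrogates of AGM by the linear surrogates \eqref{eq.phi_fw} simplifies matters. The only point deserving care is that, unlike in the quadratic case, $\Phi_k$ is affine in $\mathbf{x}$, so one must not attempt to verify the inequality by minimizing over $\mathbf{x}$; it must instead be established pointwise in $\mathbf{x}$, which the induction above does. The compactness of ${\cal X}$ (Assumption \ref{as.3}) plays no role in this lemma — it is needed only later, to guarantee that the minimizer $\mathbf{v}_{k+1}$ of the affine surrogate over ${\cal X}$ exists.
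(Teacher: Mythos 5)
Your proof is correct and follows essentially the same route as the paper's: induction on $k$, with the base case trivial since $\lambda_0 = 1$, the supporting hyperplane in \eqref{eq.6b} bounded above by $f(\mathbf{x})$ via convexity (Assumption \ref{as.2}), and the coefficients collected using $\lambda_{k+1} = (1-\delta_k)\lambda_k$. Your explicit telescoping $\lambda_k = \prod_{j=0}^{k-1}(1-\delta_j) = \frac{2}{(k+1)(k+2)}$ is in fact slightly more careful than the paper, which asserts $\lim_{k\rightarrow\infty}\lambda_k = 0$ without computation.
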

 
Using properties of the functions in \eqref{eq.phi_fw} (cf. Lemma \ref{lemma.fw_rewrite_phi} in Appendix \ref{sec.pf_afw}), the following lemma holds for AFW.

\begin{lemma}\label{lemma.fw_fx_phi}
With $\Phi_k^*:= \min_{\mathbf{x} \in {\cal X}} \Phi_k(\mathbf{x})$, AFW is guaranteed to satisfy
$f(\mathbf{x}_{k+1}) \leq \Phi_{k+1}^* + \xi_{k+1}, \forall\, k $, where $\xi_{k+1} =  (1- \delta_k)\xi_k + \frac{L \delta_k^2}{2} \| \mathbf{v}_{k+1} - \mathbf{v}_k \|^2$ and $\xi_0 = 0$.
\end{lemma}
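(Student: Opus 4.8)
The plan is to prove the bound by induction on $k$, establishing the recursion $f(\mathbf{x}_{k+1}) \leq \Phi_{k+1}^* + \xi_{k+1}$ directly. The natural inductive hypothesis is $f(\mathbf{x}_k) \leq \Phi_k^* + \xi_k$, which holds at $k=0$ since $\Phi_0^* = f(\mathbf{x}_0)$ and $\xi_0 = 0$. The strategy is to connect $\Phi_{k+1}^*$ to $\Phi_k^*$ using the recursive definition \eqref{eq.6b}, and to connect $f(\mathbf{x}_{k+1})$ to $f(\mathbf{y}_k)$ and $f(\mathbf{x}_k)$ via smoothness (Assumption \ref{as.1}) applied to the update $\mathbf{x}_{k+1} = (1-\delta_k)\mathbf{x}_k + \delta_k \mathbf{v}_{k+1}$.

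First I would characterize $\Phi_{k+1}^*$. Because each $\Phi_k$ is affine in $\mathbf{x}$ (a weighted average of supporting hyperplanes), its minimizer over $\mathcal{X}$ is exactly $\mathbf{v}_{k+1}$, the AFW iterate from Line 5. I expect the auxiliary \textbf{Lemma \ref{lemma.fw_rewrite_phi}} (cited in the statement as living in Appendix \ref{sec.pf_afw}) to supply the precise identity, something of the form $\Phi_{k+1}^* = (1-\delta_k)\Phi_k(\mathbf{v}_{k+1}) + \delta_k\big[f(\mathbf{y}_k) + \langle \nabla f(\mathbf{y}_k), \mathbf{v}_{k+1} - \mathbf{y}_k\rangle\big]$, evaluated at the minimizer. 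Since $\mathbf{v}_{k+1}$ minimizes $\Phi_{k+1}$ but not necessarily $\Phi_k$, I would lower-bound $\Phi_k(\mathbf{v}_{k+1}) \geq \Phi_k^*$ only after accounting for the displacement between $\mathbf{v}_{k+1}$ and $\mathbf{v}_k$; this is where the quadratic error term $\frac{L\delta_k^2}{2}\|\mathbf{v}_{k+1} - \mathbf{v}_k\|^2$ enters.

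Next I would bound $f(\mathbf{x}_{k+1})$. Using $L$-smoothness with $\mathbf{x}_{k+1} - \mathbf{y}_k = \delta_k(\mathbf{v}_{k+1} - \mathbf{v}_k)$ (which follows from substituting $\mathbf{y}_k = (1-\delta_k)\mathbf{x}_k + \delta_k\mathbf{v}_k$ into the $\mathbf{x}_{k+1}$ update), I get
\begin{align*}
f(\mathbf{x}_{k+1}) \leq f(\mathbf{y}_k) + \delta_k\langle \nabla f(\mathbf{y}_k), \mathbf{v}_{k+1} - \mathbf{v}_k\rangle + \frac{L\delta_k^2}{2}\|\mathbf{v}_{k+1} - \mathbf{v}_k\|^2.
\end{align*}
I would then rewrite $f(\mathbf{y}_k) = (1-\delta_k)f(\mathbf{y}_k) + \delta_k f(\mathbf{y}_k)$ and use convexity (Assumption \ref{as.2}) together with the inductive hypothesis to replace the $(1-\delta_k)$-weighted terms by $(1-\delta_k)(\Phi_k^* + \xi_k)$, absorbing the gradient inner products so that the $\delta_k$-weighted terms reconstruct exactly the supporting-hyperplane expression appearing in $\Phi_{k+1}^*$. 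Collecting terms should yield $f(\mathbf{x}_{k+1}) \leq \Phi_{k+1}^* + (1-\delta_k)\xi_k + \frac{L\delta_k^2}{2}\|\mathbf{v}_{k+1} - \mathbf{v}_k\|^2 = \Phi_{k+1}^* + \xi_{k+1}$, closing the induction.

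The main obstacle I anticipate is the bookkeeping in the algebraic step that matches the gradient inner-product terms to the affine structure of $\Phi_{k+1}^*$: one must show that the residual after applying convexity and the induction hypothesis assembles precisely into $\Phi_{k+1}^*$ evaluated at its own minimizer $\mathbf{v}_{k+1}$, rather than leaving a stray cross term. This is exactly why the intermediate Lemma \ref{lemma.fw_rewrite_phi} is invoked — it presumably records the clean identity for $\Phi_{k+1}(\mathbf{v}_{k+1})$ and the fact that $\mathbf{v}_{k+1} = \argmin_{\mathbf{x}\in\mathcal{X}}\Phi_{k+1}(\mathbf{x})$, so that $\Phi_{k+1}(\mathbf{v}_{k+1}) = \Phi_{k+1}^*$. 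With that identity in hand the matching is mechanical; without it, keeping track of which hyperplane each term belongs to is the delicate part.
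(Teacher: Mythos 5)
Your plan matches the paper's proof essentially step for step: induction, the explicit identity for $\Phi_{k+1}^* = \Phi_{k+1}(\mathbf{v}_{k+1})$ from Lemma \ref{lemma.fw_rewrite_phi}, the bound $\Phi_k(\mathbf{v}_{k+1}) \geq \Phi_k^*$ (stated in the paper as $\langle \bm{\theta}_k, \mathbf{v}_{k+1} - \mathbf{v}_k \rangle \geq 0$ via optimality of $\mathbf{v}_k$, which is the same fact since $\Phi_k$ is affine), convexity to bring in the inductive hypothesis through $f(\mathbf{y}_k)$, and smoothness along $\mathbf{x}_{k+1} - \mathbf{y}_k = \delta_k(\mathbf{v}_{k+1} - \mathbf{v}_k)$, with the inner products assembling exactly as you predict. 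One small correction that does not affect the argument: the quadratic term does \emph{not} enter when lower-bounding $\Phi_k(\mathbf{v}_{k+1}) \geq \Phi_k^*$ --- that inequality is immediate because $\mathbf{v}_{k+1} \in {\cal X}$ and $\Phi_k^*$ is the minimum over ${\cal X}$ --- it comes solely from the smoothness step, exactly as in your second paragraph.
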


Leveraging Lemma \ref{lemma.fw_fx_phi}, the convergence rate of AFW for general problems can be established.

\begin{theorem}\label{thm.afw_general}
	When Assumptions \ref{as.1}, \ref{as.2} and \ref{as.3} are satisfied, upon choosing $\delta_k = \frac{2}{k+3}$ and $\bm{\theta}_0 = \mathbf{0}$, AFW guarantees 
	\begin{align*}
		f(\mathbf{x}_k) - f(\mathbf{x}^*) \leq \frac{ 2 \big( f(\mathbf{x}_0) - f(\mathbf{x}^*) \big) }{(k+1)(k+2)} + \frac{2 LD^2}{k+2},  \forall\, k.
	\end{align*}
\end{theorem}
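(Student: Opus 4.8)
The plan is to feed the two preceding lemmas into the standard estimate-sequence argument and then evaluate the resulting quantities $\lambda_k$ and $\xi_k$ in closed form for the prescribed step size $\delta_k = \frac{2}{k+3}$. Concretely, Lemma~\ref{lemma.fw_fx_phi} gives $f(\mathbf{x}_k) \le \Phi_k^* + \xi_k = \min_{\mathbf{x}\in{\cal X}}\Phi_k(\mathbf{x}) + \xi_k$, and since $\mathbf{x}^*\in{\cal X}$ I may upper bound the minimum by evaluating at $\mathbf{x}^*$, i.e. $\Phi_k^* \le \Phi_k(\mathbf{x}^*)$. Invoking the ES inequality of Lemma~\ref{lemma.es_fw} at $\mathbf{x}=\mathbf{x}^*$ together with $\Phi_0(\mathbf{x})\equiv f(\mathbf{x}_0)$ then yields $\Phi_k(\mathbf{x}^*) \le (1-\lambda_k) f(\mathbf{x}^*) + \lambda_k f(\mathbf{x}_0)$. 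Chaining these produces the master bound
\begin{align*}
	f(\mathbf{x}_k) - f(\mathbf{x}^*) \le \lambda_k \big( f(\mathbf{x}_0) - f(\mathbf{x}^*) \big) + \xi_k,
\end{align*}
so the theorem reduces to showing $\lambda_k = \frac{2}{(k+1)(k+2)}$ and $\xi_k \le \frac{2LD^2}{k+2}$.

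For the coefficient $\lambda_k$, I would simply unroll the recursion $\lambda_k = \lambda_{k-1}(1-\delta_{k-1})$ from $\lambda_0 = 1$. With $1-\delta_{j} = \frac{j+1}{j+3}$, the product $\lambda_k = \prod_{j=0}^{k-1}\frac{j+1}{j+3}$ telescopes to $\frac{2}{(k+1)(k+2)}$, which already matches the first term of the claimed rate.

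The main work, and the step I expect to be the most delicate, is bounding $\xi_k$. Here I would first use Assumption~\ref{as.3} to replace $\|\mathbf{v}_{k+1}-\mathbf{v}_k\|^2$ by $D^2$, turning the recursion of Lemma~\ref{lemma.fw_fx_phi} into $\xi_{k+1} \le (1-\delta_k)\xi_k + \frac{L\delta_k^2 D^2}{2}$. Unrolling this linear recursion with $\xi_0 = 0$ gives $\xi_k \le \sum_{\tau=0}^{k-1} \frac{L\delta_\tau^2 D^2}{2}\prod_{j=\tau+1}^{k-1}(1-\delta_j)$, and recognizing $\prod_{j=\tau+1}^{k-1}(1-\delta_j) = \lambda_k/\lambda_{\tau+1}$ lets me pull out $\lambda_k$ and reduce the problem to estimating $\sum_{\tau=0}^{k-1}\frac{L\delta_\tau^2 D^2}{2\lambda_{\tau+1}}$. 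Plugging in $\delta_\tau = \frac{2}{\tau+3}$ and $\lambda_{\tau+1} = \frac{2}{(\tau+2)(\tau+3)}$, each summand simplifies to $LD^2\frac{\tau+2}{\tau+3} \le LD^2$, so the sum is at most $k L D^2$ and hence $\xi_k \le k\lambda_k LD^2 = \frac{2k LD^2}{(k+1)(k+2)} \le \frac{2LD^2}{k+2}$, the last step using $\frac{k}{k+1}\le 1$. Combining this with the master bound and the value of $\lambda_k$ finishes the proof; the $k=0$ case holds trivially since the claimed inequality then reads $f(\mathbf{x}_0)-f(\mathbf{x}^*) \le \big(f(\mathbf{x}_0)-f(\mathbf{x}^*)\big) + LD^2$. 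The only subtlety worth double-checking is the clean cancellation in the summand, which hinges on the specific pairing of $\delta_\tau^2$ with $\lambda_{\tau+1}$; any other admissible step size would change this estimate but not the overall structure.
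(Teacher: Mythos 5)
Your proposal is correct and follows essentially the same route as the paper's own proof: it chains Lemma~\ref{lemma.fw_fx_phi} with the estimate-sequence bound (Lemma~\ref{lemma.est_seq_conv}) to get $f(\mathbf{x}_k)-f(\mathbf{x}^*)\le \lambda_k\big(f(\mathbf{x}_0)-f(\mathbf{x}^*)\big)+\xi_k$, telescopes $\lambda_k=\frac{2}{(k+1)(k+2)}$, and bounds $\xi_k$ by replacing $\|\mathbf{v}_{k+1}-\mathbf{v}_k\|^2$ with $D^2$ and unrolling the recursion, with your $\lambda_k/\lambda_{\tau+1}$ identity being just a repackaging of the paper's explicit product $\prod_{j=\tau+1}^{k-1}(1-\delta_j)=\frac{(\tau+2)(\tau+3)}{(k+1)(k+2)}$. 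All the computations, including the final estimate $\xi_k\le \frac{2kLD^2}{(k+1)(k+2)}\le \frac{2LD^2}{k+2}$, match the paper's.
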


Theorem \ref{thm.afw_general} asserts that the convergence rate of AFW is ${\cal O}(\frac{LD^2}{k})$, coinciding with that of FW \citep{jaggi2013}. Notwithstanding, AFW is tight in terms of the number of FW steps required. To see this, note that the convergence rate in Theorem \ref{thm.afw_general} translates to requiring ${\cal O}(\frac{LD^2}{\epsilon})$ FW steps to guarantee $f(\mathbf{x}_k) - f(\mathbf{x}^*) \leq \epsilon$. This matches the lower bound \citep{jaggi2013,clarkson2010}. Similar to other FW variants, acceleration for AFW cannot be claimed for general problems. AFW however, is attractive numerically because it can alleviate the zig-zag behavior\footnote{The change between $f(\mathbf{x}_{k+1})$ and $f(\mathbf{x}_k)$ is large with high frequency, so zig-zag emerges when plotting $f(\mathbf{x}_k) - f(\mathbf{x}^*)$ versus $k$.} of FW, as we will see in Section \ref{sec.numerical}. 

\textbf{Why AFW cannot achieve acceleration in general?} Recall from Lemma \ref{lemma.fw_fx_phi}, that critical to acceleration is ensuring a small $\xi_k$, which in turn requires $\mathbf{v}_{k+1}$ and $\mathbf{v}_k$ to stay sufficiently close. This is difficult in general because the non-uniqueness of $\mathbf{v}_k$ prevents one from ensuring a small upper bound of $\|\mathbf{v}_k -\mathbf{v}_{k+1}\|^2$ $\forall~\mathbf{v}_k$, $\forall~\mathbf{v}_{k+1}$. The ineffectiveness of momentum in AFW in turn signifies the importance of the added regularizer in AGM momentum update \eqref{eq.v_rewrite}.
%In spite of this, we will show that together with the structure on ${\cal X}$, the momentum leads to acceleration.	This in turn signifies the role that the added regularizer plays in the AGM update \eqref{eq.v_rewrite}.

\subsection{AFW acceleration for a class of problems}

In this subsection, we provide constraint dependent accelerated rates of AFW on problems that cover some important ones in machine learning and signal processing. Specifically, an $\ell_2$ norm ball constraint, i.e., ${\cal X}:= \{ \mathbf{x}| \| \mathbf{x} \|_2 \leq \frac{D}{2} \}$, is considered in this subsection and extensions to other constraints are discussed later in Appendix. We also assume the constraint to be active.
\begin{assumption}\label{as.4}
	The constraint is active, i.e., $\| \nabla f(\mathbf{x}^*)\|^2 \geq G >0$.
\end{assumption}
Note that it is natural to rely on the position of the optimal solution in FW type algorithms for analysis, and this assumption is also adopted in \citep{levitin1966,dunn1979}. For many machine learning tasks, Assumption \ref{as.4} is rather mild since problem \eqref{eq.prob} with an $\ell_2$ norm ball constraint is equivalent to minimizing a regularized unconstraint problem $f(\mathbf{x})+ \gamma\| \mathbf{x}\|^2$. This relation can be established through Lagrangian duality. In view of this, Assumption \ref{as.4} simply implies that $\gamma > 0$, i.e., the $\ell_2$ norm ball constraint plays the role of a regularizer. The technical reason behind the need of this assumption can be exemplified through a one-dimensional problem. Consider minimizing $f(x) = x^2$ over ${\cal X}= \{x| x \in [-1,1]\}$. We clearly have $x^* = 0$ for which the constraint is inactive at the optimal solution. When $x_k$ is close to $x^* = 0$, it can happen that $\theta_k>0$ and $\theta_{k+1}<0$, that leads to $v_k = -1$ and $v_{k+1} = 1$, pushing $v_k$ and $v_{k+1}$ further apart from each other. This also results in a large $\xi_k$ in Lemma \ref{lemma.fw_fx_phi}, which hinders a faster convergence.

Under $\ell_2$ norm ball constraints, the FW step can be solved in closed form. Let $R :=\frac{D}{2}$, then we have
\begin{align}\label{eq.fw_opt_v}
	\mathbf{v}_{k+1} = \argmin_{\mathbf{x}\in{\cal X}} \langle \bm{\theta}_{k+1}, \mathbf{x} \rangle = - \frac{R}{\| \bm{\theta}_{k+1} \|} \bm{\theta}_{k+1}.
\end{align}
The closed form solution guarantees the \textit{uniqueness} of $\mathbf{v}_{k+1}$, and hence it wipes out the obstacle for AFW to achieve a faster rate. In addition, through \eqref{eq.fw_opt_v} it becomes possible to guarantee that $\mathbf{v}_{k+1}$ and $\mathbf{v}_k$ are close whenever $\bm{\theta}_k$ is close to $\bm{\theta}_{k+1}$.

\begin{theorem}\label{thm.afw_acc}
	If Assumptions \ref{as.1}, \ref{as.2}, \ref{as.3} and \ref{as.4} are satisfied, and ${\cal X}$ is an $\ell_2$ norm ball, choosing $\delta_k = \frac{2}{k+3}$ and $\bm{\theta}_0 = \mathbf{0}$, AFW guarantees acceleration with convergence rate
	\begin{align*}
		f(\mathbf{x}_k) - f(\mathbf{x}^*) = {\cal O} \bigg( \min \Big \{ \frac{  LD^2 T + C\ln k}{k^2}, \frac{LD^2}{k} \Big\}	\bigg)
	\end{align*}
	where $C$ and $T$ are constants depending on $L$, $D$ and $G$.
\end{theorem}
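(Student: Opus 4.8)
The plan is to push the estimate-sequence machinery to a closed master inequality and then spend essentially all the work controlling the momentum increment $\|\mathbf{v}_{k+1}-\mathbf{v}_k\|$. First I would combine the two structural lemmas already established. Evaluating the estimate-sequence inequality of Definition~\ref{def.es} (valid by Lemma~\ref{lemma.es_fw}) at $\mathbf{x}=\mathbf{x}^*$ and using $\Phi_0\equiv f(\mathbf{x}_0)$ gives $\Phi_k^*\le\Phi_k(\mathbf{x}^*)\le(1-\lambda_k)f(\mathbf{x}^*)+\lambda_k f(\mathbf{x}_0)$, hence $\Phi_k^*-f(\mathbf{x}^*)\le\lambda_k\big(f(\mathbf{x}_0)-f(\mathbf{x}^*)\big)$. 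Feeding this into Lemma~\ref{lemma.fw_fx_phi} yields the master bound $f(\mathbf{x}_k)-f(\mathbf{x}^*)\le\lambda_k\big(f(\mathbf{x}_0)-f(\mathbf{x}^*)\big)+\xi_k$. With $\delta_k=\tfrac{2}{k+3}$ the recursion $\lambda_k=\lambda_{k-1}(1-\delta_{k-1})$ telescopes to $\lambda_k=\tfrac{2}{(k+1)(k+2)}=\mathcal{O}(1/k^2)$, which already supplies the $\mathcal{O}(1/k^2)$ contribution of the initial gap, so everything reduces to showing $\xi_k=\mathcal{O}\big((LD^2T+C\ln k)/k^2\big)$.

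Next I would unroll $\xi_{k+1}=(1-\delta_k)\xi_k+\tfrac{L\delta_k^2}{2}\|\mathbf{v}_{k+1}-\mathbf{v}_k\|^2$ with $\xi_0=0$. Since $\prod_{j=\tau+1}^{k-1}(1-\delta_j)=\lambda_k/\lambda_{\tau+1}=\tfrac{(\tau+2)(\tau+3)}{(k+1)(k+2)}$, the cancellation with $\delta_\tau^2=\tfrac{4}{(\tau+3)^2}$ gives the clean form $\xi_k=\tfrac{2L}{(k+1)(k+2)}\sum_{\tau=0}^{k-1}\tfrac{\tau+2}{\tau+3}\|\mathbf{v}_{\tau+1}-\mathbf{v}_\tau\|^2$, so the whole task is to bound $\sum_\tau\|\mathbf{v}_{\tau+1}-\mathbf{v}_\tau\|^2$. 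Here the closed form \eqref{eq.fw_opt_v}, $\mathbf{v}_{k+1}=-R\,\bm{\theta}_{k+1}/\|\bm{\theta}_{k+1}\|$, is indispensable: combining the elementary inequality $\big\|\tfrac{\mathbf{a}}{\|\mathbf{a}\|}-\tfrac{\mathbf{b}}{\|\mathbf{b}\|}\big\|\le\tfrac{2\|\mathbf{a}-\mathbf{b}\|}{\max\{\|\mathbf{a}\|,\|\mathbf{b}\|\}}$ with $\bm{\theta}_{k+1}-\bm{\theta}_k=\delta_k(\nabla f(\mathbf{y}_k)-\bm{\theta}_k)$ (the $\bm{\theta}$-recursion, Line~4 of Alg.~\ref{alg.afw}) yields $\|\mathbf{v}_{k+1}-\mathbf{v}_k\|\le\tfrac{2R\delta_k\,\|\nabla f(\mathbf{y}_k)-\bm{\theta}_k\|}{\max\{\|\bm{\theta}_{k+1}\|,\|\bm{\theta}_k\|\}}$. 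The numerator is harmless since $\nabla f$ is continuous on the compact ${\cal X}$ and $\delta_k=\mathcal{O}(1/k)$; what remains is a lower bound on $\|\bm{\theta}_{k+1}\|$ that stays bounded away from zero.

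The hard part, and the only place Assumption~\ref{as.4} enters, is exactly this lower bound. The plan is to show $\bm{\theta}_{k+1}\to\nabla f(\mathbf{x}^*)$. Unrolling the $\bm{\theta}$-recursion writes $\bm{\theta}_{k+1}=\sum_{\tau=0}^k w_\tau\nabla f(\mathbf{y}_\tau)$ with $w_\tau=\tfrac{2(\tau+2)}{(k+2)(k+3)}$ and $\sum_\tau w_\tau=1-\lambda_{k+1}$, so $L$-smoothness and $\|\nabla f(\mathbf{x}^*)\|\ge\sqrt{G}$ give $\|\bm{\theta}_{k+1}\|\ge(1-\lambda_{k+1})\sqrt{G}-L\sum_\tau w_\tau\|\mathbf{y}_\tau-\mathbf{x}^*\|$. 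To force the last sum to vanish I would exploit the geometry of the active $\ell_2$ ball: the KKT condition gives $\nabla f(\mathbf{x}^*)\parallel-\mathbf{x}^*$, and coupling it with convexity and $\|\mathbf{x}\|\le R$ produces the quadratic-growth bound $f(\mathbf{x})-f(\mathbf{x}^*)\ge\tfrac{\sqrt{G}}{2R}\|\mathbf{x}-\mathbf{x}^*\|^2$ for every $\mathbf{x}\in{\cal X}$. Plugging the $\mathcal{O}(LD^2/k)$ rate of Theorem~\ref{thm.afw_general} (transferred from $\mathbf{x}_k$ to $\mathbf{y}_k$ via convexity and $\delta_k=\mathcal{O}(1/k)$) into this growth bound gives $\|\mathbf{y}_\tau-\mathbf{x}^*\|=\mathcal{O}(1/\sqrt{\tau})$, whence $\sum_\tau w_\tau\|\mathbf{y}_\tau-\mathbf{x}^*\|=\mathcal{O}(1/\sqrt{k})\to0$ and $\|\bm{\theta}_{k+1}\|\ge\tfrac{\sqrt{G}}{2}$ for all $k\ge k_0$, where $k_0$ is a constant depending on $L,D,G$. (For the finitely many early steps where $\bm{\theta}$ may vanish, the convention $\mathbf{v}_{k+1}=\mathbf{v}_k$ keeps the increment zero.) I expect this bootstrapping step to be the main obstacle: it is where both the active-constraint assumption and the special structure of the ball are truly needed, and it is the step most prone to slack in the constants.

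Finally I would assemble the pieces by splitting $\sum_{\tau=0}^{k-1}\|\mathbf{v}_{\tau+1}-\mathbf{v}_\tau\|^2$ at $\tau=k_0$. For the transient $\tau<k_0$ I bound each increment by the squared diameter $D^2$ (Assumption~\ref{as.3}), contributing a constant that, after the prefactor $\tfrac{2L}{(k+1)(k+2)}$, becomes the $LD^2T/k^2$ term with $T\sim k_0$. For the tail $\tau\ge k_0$ the lower bound just proved turns the per-step bound into $\|\mathbf{v}_{\tau+1}-\mathbf{v}_\tau\|^2=\mathcal{O}(1/\tau)$ up to constants in $L,D,G$, and the harmonic sum $\sum_{\tau\ge k_0}\mathcal{O}(1/\tau)=\mathcal{O}(\ln k)$ is precisely what produces the logarithmic factor, giving the $C\ln k/k^2$ term. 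Adding back the $\lambda_k$ term from the master bound yields the $\tilde{\mathcal{O}}(1/k^2)$ rate; since Theorem~\ref{thm.afw_general} guarantees $\mathcal{O}(LD^2/k)$ unconditionally, taking the smaller of the two establishes the stated $\min$. The only remaining work is routine constant-chasing to pin down $T$ and $C$.
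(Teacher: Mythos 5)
Your proposal is correct, and it reproduces the paper's overall skeleton — the estimate-sequence master bound $f(\mathbf{x}_k)-f(\mathbf{x}^*)\le\lambda_k\big(f(\mathbf{x}_0)-f(\mathbf{x}^*)\big)+\xi_k$ with $\lambda_k=\tfrac{2}{(k+1)(k+2)}$, the unrolled $\xi_k$ recursion, a finite time $T$ after which $\|\bm{\theta}_{k+1}\|\ge\tfrac{\sqrt{G}}{2}$, and the split of the increment sum at $T$ — but both of your key technical ingredients genuinely differ from the paper's. First, where the paper controls $\|\nabla f(\mathbf{y}_\tau)-\nabla f(\mathbf{x}^*)\|\le C_1/\sqrt{\tau+3}$ directly from the $\mathcal{O}(1/\tau)$ function-value rate via co-coercivity (Lemma \ref{lemma.nesterov}) and the optimality condition (Lemma \ref{lemma.fw_y_grad}), an argument valid for \emph{any} convex compact $\mathcal{X}$, you pass through iterate convergence: your quadratic-growth inequality $f(\mathbf{x})-f(\mathbf{x}^*)\ge\tfrac{\sqrt{G}}{2R}\|\mathbf{x}-\mathbf{x}^*\|^2$ checks out (first-order optimality on the active ball gives $\mathbf{x}^*=-R\,\nabla f(\mathbf{x}^*)/\|\nabla f(\mathbf{x}^*)\|$, and $\|\mathbf{x}-\mathbf{x}^*\|^2\le 2\big(R^2-\langle\mathbf{x},\mathbf{x}^*\rangle\big)$ for $\|\mathbf{x}\|\le R$), then $L$-smoothness converts $\|\mathbf{y}_\tau-\mathbf{x}^*\|=\mathcal{O}(1/\sqrt{\tau})$ into the needed gradient bound; this is sound but ball-specific, whereas the paper's route is constraint-agnostic (the ball enters its proof only through the closed form \eqref{eq.fw_opt_v}), and as a bonus your growth bound makes $\mathbf{x}^*$ unique so you never need Lemma \ref{lemma.equal}. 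Second, for the momentum increment the paper anchors both $\bm{\theta}_k$ and $\bm{\theta}_{k+1}$ at $\nabla f(\mathbf{x}^*)$ via Lemma \ref{lemma.theta_gradx*} ($\bm{\theta}_k=\nabla f(\mathbf{x}^*)+\bm{\gamma}_k$, $\|\bm{\gamma}_k\|=\mathcal{O}(1/\sqrt{k})$), yielding $\|\mathbf{v}_{k+1}-\mathbf{v}_k\|=\mathcal{O}(1/\sqrt{k})$ and hence the $\ln k$ from the harmonic sum; you instead use the recursion $\bm{\theta}_{k+1}-\bm{\theta}_k=\delta_k\big(\nabla f(\mathbf{y}_k)-\bm{\theta}_k\big)$ together with the (correct) Lipschitz estimate $\big\|\tfrac{\mathbf{a}}{\|\mathbf{a}\|}-\tfrac{\mathbf{b}}{\|\mathbf{b}\|}\big\|\le\tfrac{2\|\mathbf{a}-\mathbf{b}\|}{\max\{\|\mathbf{a}\|,\|\mathbf{b}\|\}}$, and since $\|\nabla f(\mathbf{y}_k)-\bm{\theta}_k\|$ is bounded by a constant on the compact $\mathcal{X}$, this gives the \emph{stronger} per-step bound $\|\mathbf{v}_{k+1}-\mathbf{v}_k\|=\mathcal{O}(\delta_k)=\mathcal{O}(1/k)$. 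Your final paragraph then understates your own lemma by claiming only $\|\mathbf{v}_{\tau+1}-\mathbf{v}_\tau\|^2=\mathcal{O}(1/\tau)$: squaring your bound gives $\mathcal{O}(1/\tau^2)$, the resulting series converges, and your route actually delivers $\xi_k=\mathcal{O}\big((LD^2T+C)/k^2\big)$ with no $\ln k$ at all — i.e., pushed to its own conclusion, your argument slightly improves Theorem \ref{thm.afw_acc} and shows the logarithm in the paper is an artifact of anchoring at $\nabla f(\mathbf{x}^*)$ rather than comparing consecutive $\bm{\theta}$'s; the slack is harmless for correctness since the stated rate only requires $\tilde{\mathcal{O}}(1/k^2)$, and the $\mathcal{O}(LD^2/k)$ branch of the $\min$ follows from Theorem \ref{thm.afw_general} exactly as in the paper.
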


Theorem \ref{thm.afw_acc} demonstrates that momentum improves the convergence of FW by providing a faster rate. Roughly speaking, when the iteration number $k\geq T$, the rate of AFW dominates that of FW. We note that this matches our intuition, that is, the momentum in AGM \eqref{eq.v_rewrite} only behaves like an affine function when $k$ is large (so that the weight on the regularizer is small). In addition, the rate in Theorem \ref{thm.afw_acc} can be written compactly as $\tilde{\cal O}\big(  \frac{TLD^2}{k^2} \big), \forall k$, hence it achieves acceleration with a worse dependence on $D$ compared to vanilla FW. Note that the choice for $\delta_k$ and $\bm{\theta}_0$ remains the same as those used in general problems, leading to an identical implementation to non-accelerated cases. Compared with CGS, AFW sacrifices the $D$ dependence in the convergence rate to trade for i) the nonnecessity of the knowledge of $L$ and $D$, and ii) ensuring only one FW subproblem per iteration (whereas at most ${\cal O}(k)$ subproblems are needed in CGS).

\textbf{Beyond $\ell_2$ norm balls.} We show in Appendix \ref{apdx.other_constraint} that AFW can achieve $\tilde{\cal O}(\frac{1}{k^2})$ for an active $\ell_p$ norm ball constraint under some regularity assumptions on $\nabla f(\mathbf{x}^*)$, where $p \in [1, +\infty)$ and $p \neq 2$. Though not covering all cases, it still showcases that the momentum is partially helpful for FW type algorithms. In general, when a specific structure of $\mathbf{x}^*$ (e.g., sparsity) is promoted by ${\cal X}$ (so that $\mathbf{x}^*$ is likely to live on the boundary), and one can ensure the uniqueness of $\mathbf{v}_k$ through either a closed-form solution or a specific implementation, acceleration can be effected. Under certain regularity conditions, the fast rate of AFW can be obtained also when ${\cal X}$ is an $n$-support norm ball, or, a group $\ell_{s,p}$ norm ball with $s,p \in (1,2]$, to name a few constraint sets.

\section{Numerical tests}\label{sec.numerical}

\begin{wrapfigure}{r}{0.38\textwidth}
\vspace{-0.2cm}
\begin{minipage}{0.36\textwidth}
\centering
	\vspace{-0.6cm}
	\begin{tabular}{c}
		\hspace{-0.3cm}
		\includegraphics[width=.98\textwidth]{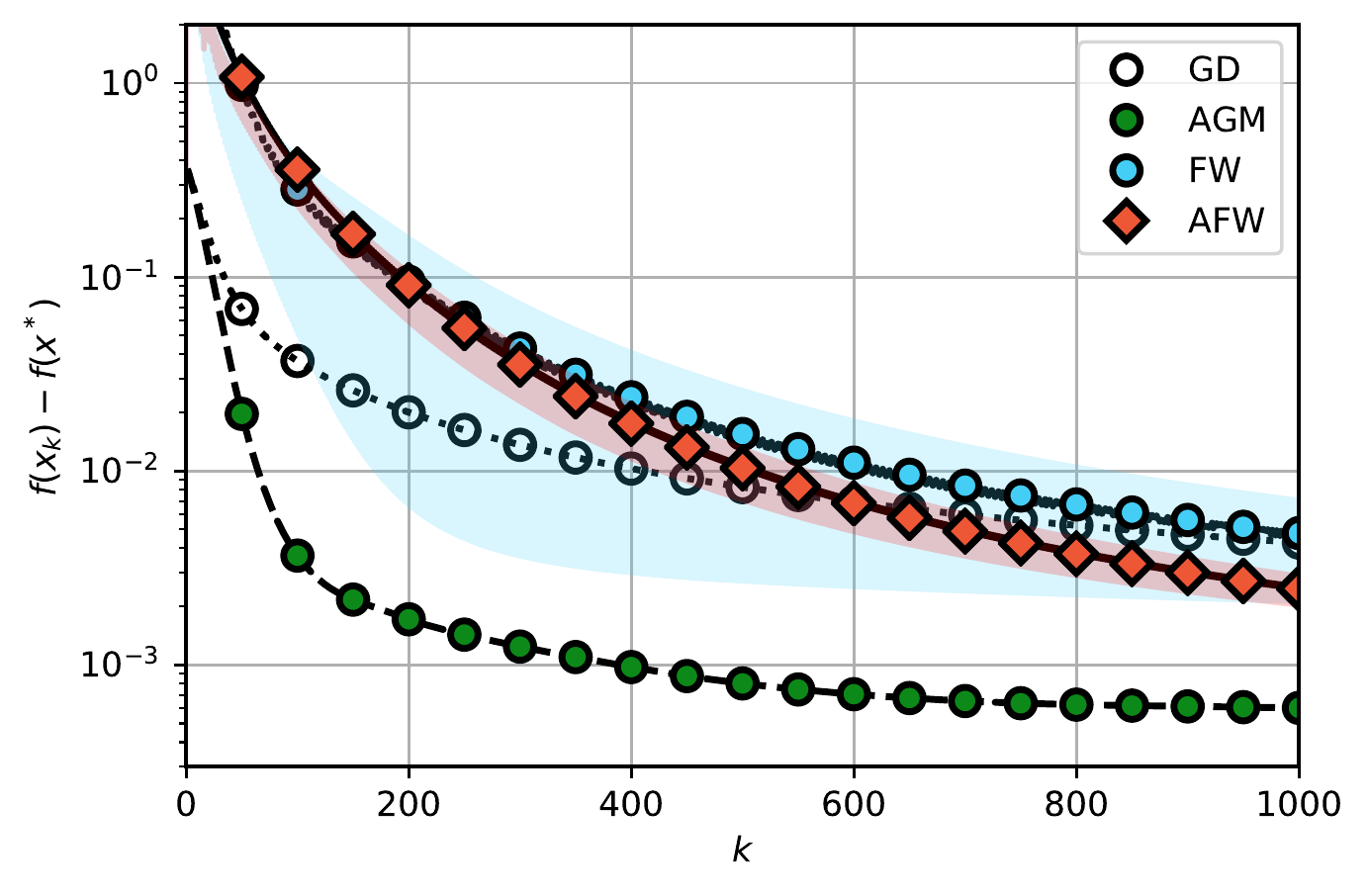}
	\end{tabular}
	\vspace{-0.2cm}
	\caption{Performance of AFW when the optimal solution is at interior.} 
	 \label{fig.l2_int}
\end{minipage}
\vspace{-1.3cm}
\end{wrapfigure}

We validate our theoretical findings as well as the efficiency of AFW on two benchmarked machine learning problems, binary classification and matrix completion in this section.

\subsection{Binary classification}

\begin{figure*}[t]
	\centering
	\begin{tabular}{ccc}
		\hspace{-0.4cm}
		\includegraphics[width=.32\textwidth]{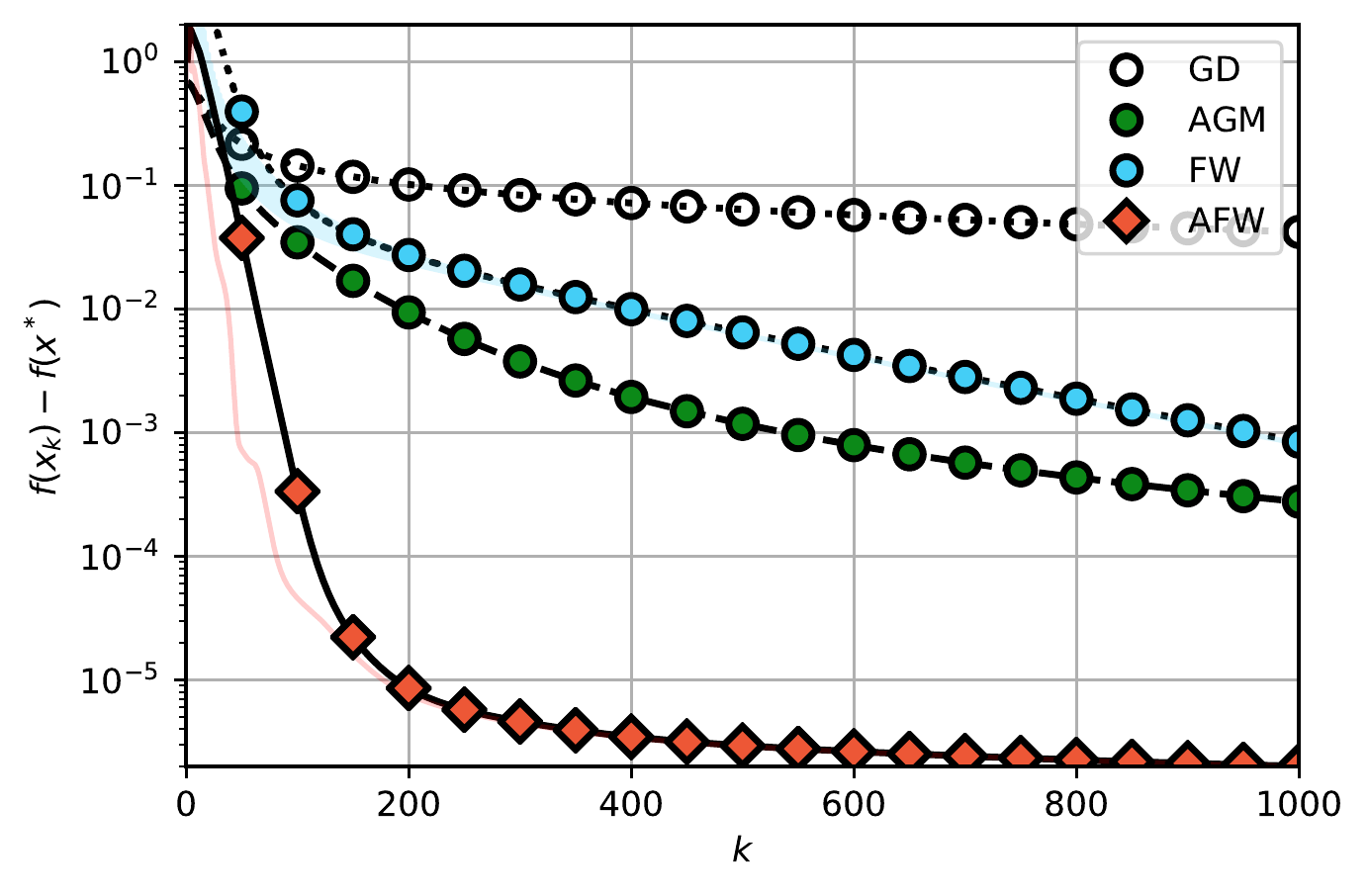}&
		\hspace{-0.3cm}
		\includegraphics[width=.32\textwidth]{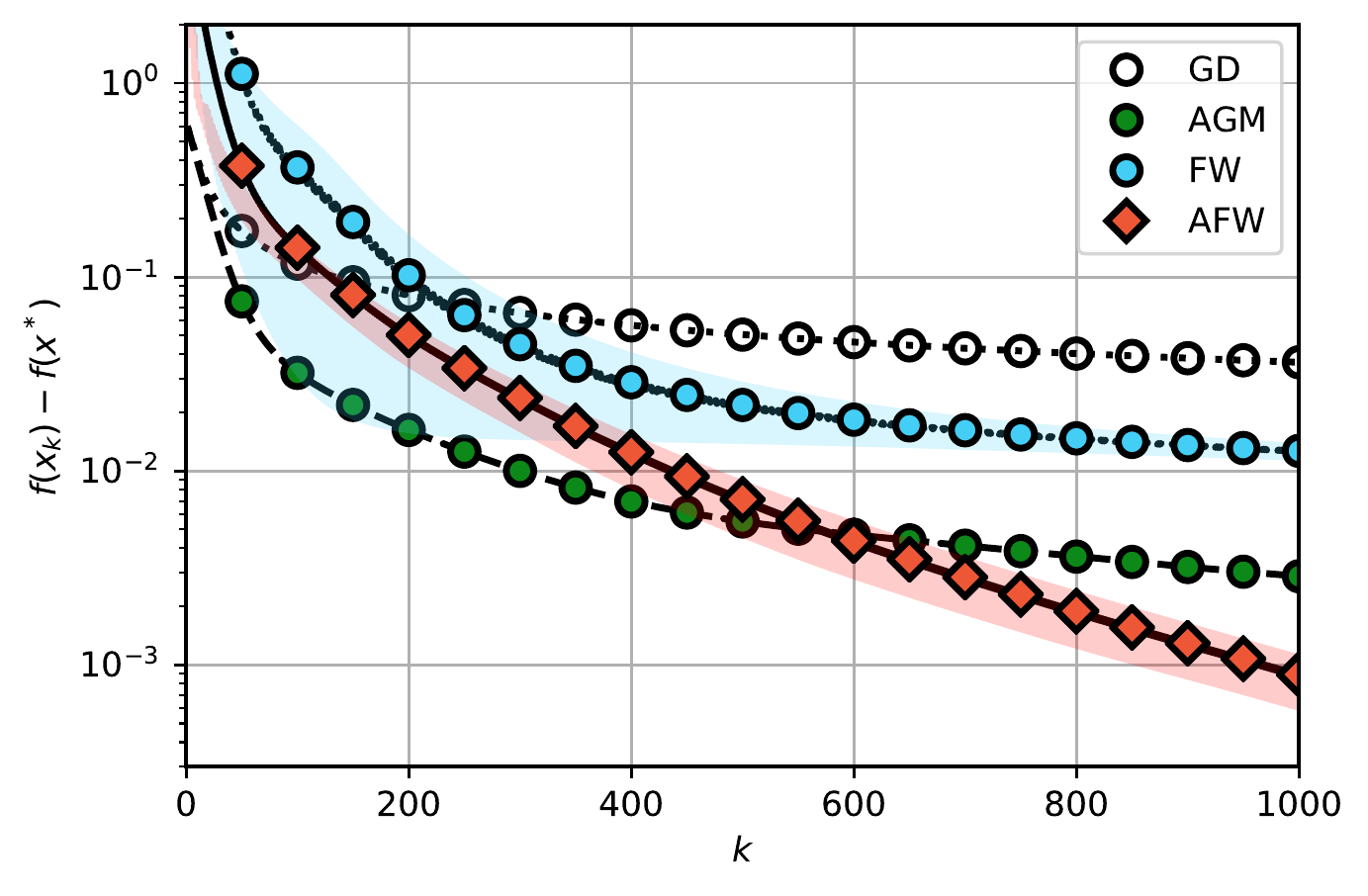}&
		\hspace{-0.3cm}
		\includegraphics[width=.32\textwidth]{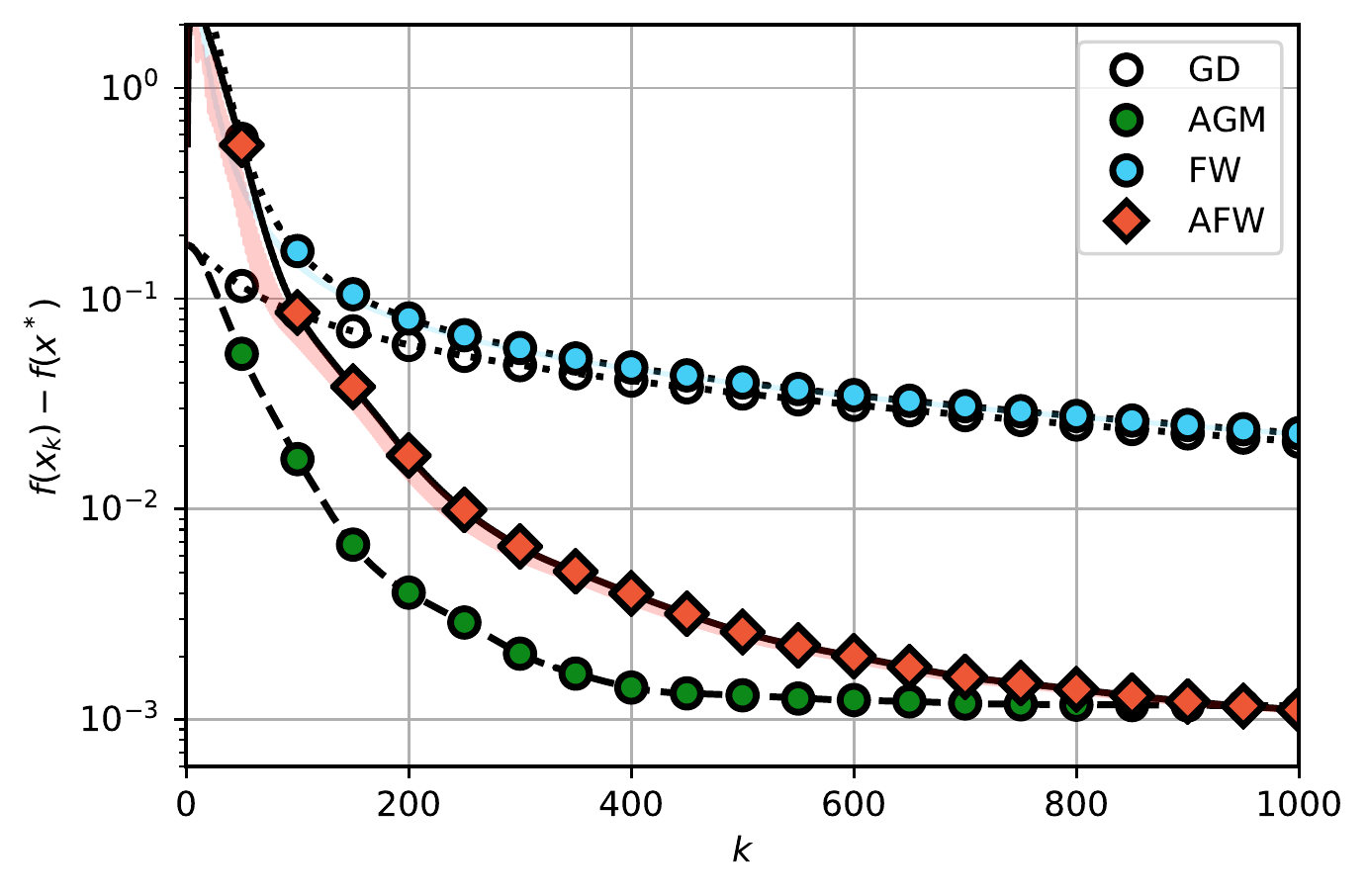} \\
		\hspace{-0.4cm}
		\includegraphics[width=.32\textwidth]{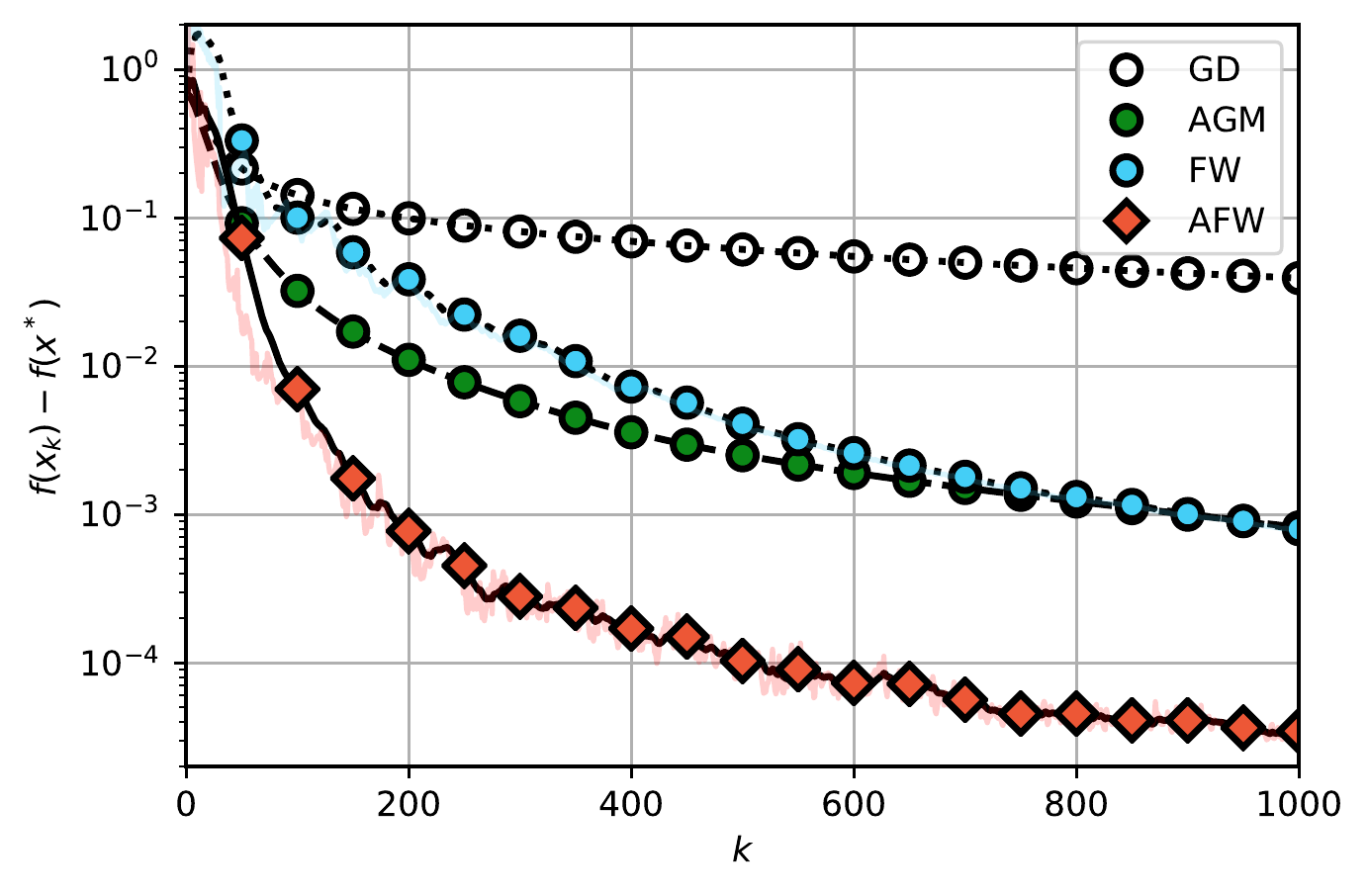}&
		\hspace{-0.3cm}
		\includegraphics[width=.32\textwidth]{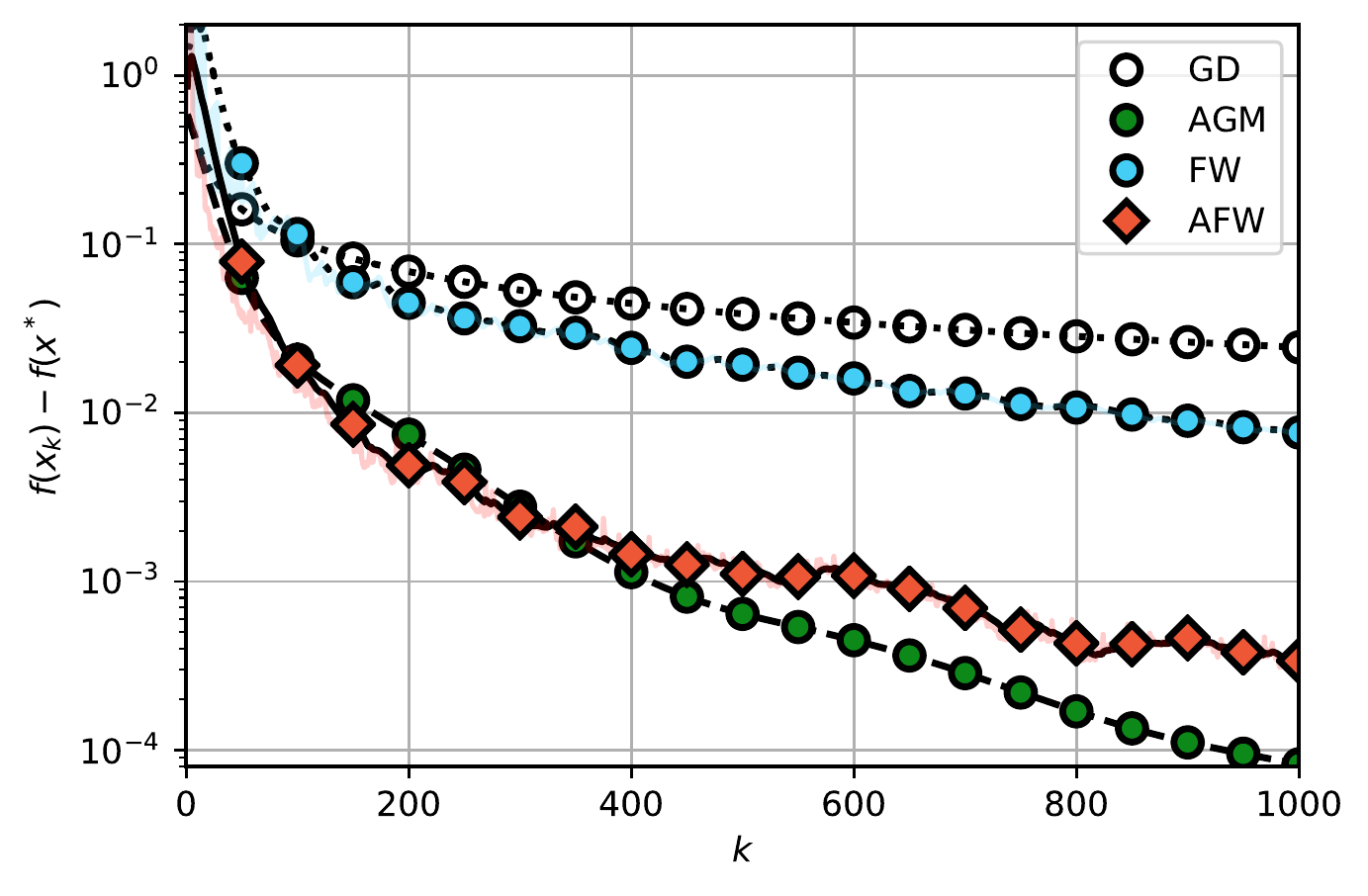}&
		\hspace{-0.3cm}
		\includegraphics[width=.32\textwidth]{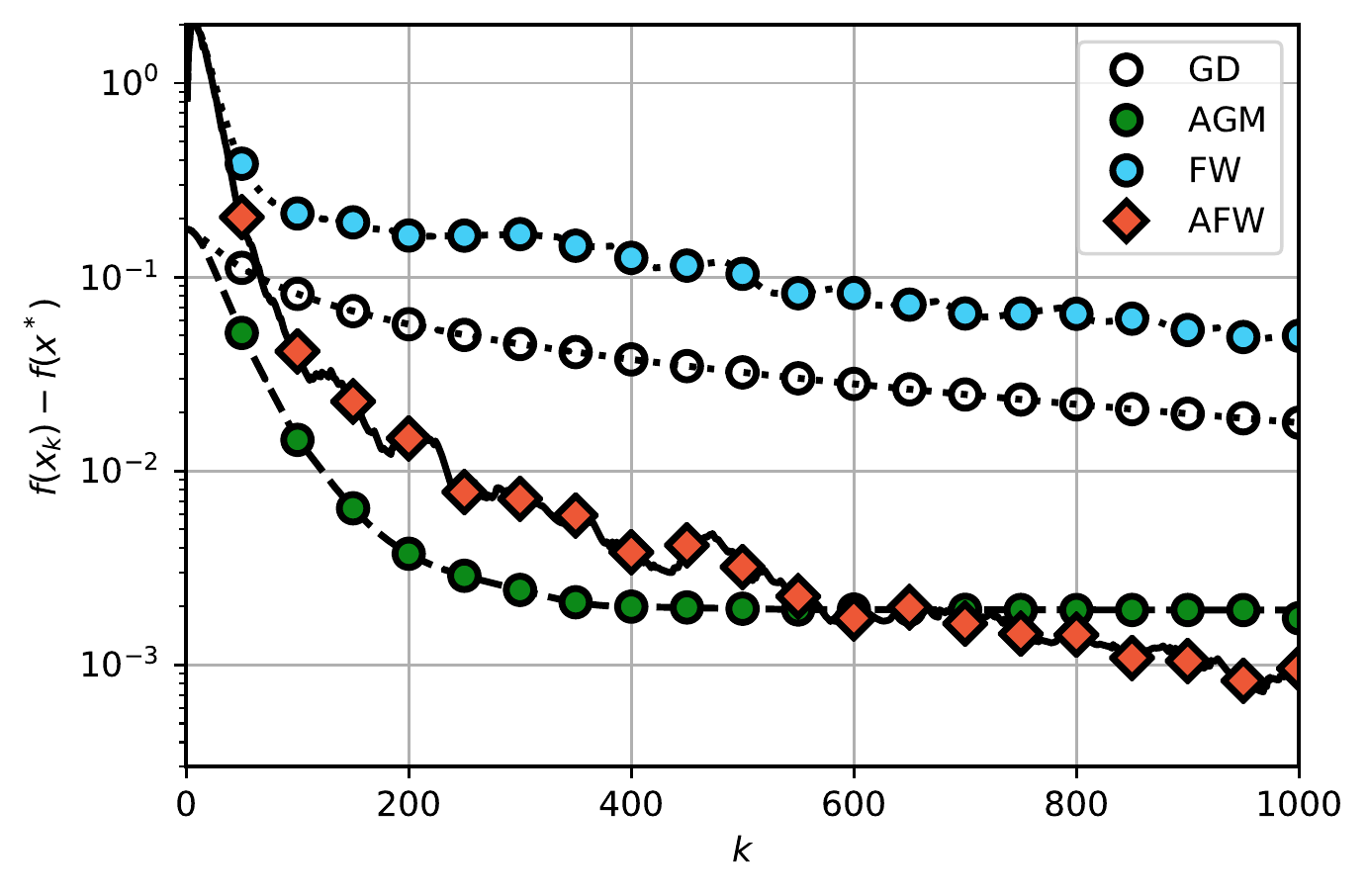} \\
		(a) \textit{mushroom} & (b)  \textit{mnist} & (c) \textit{covtype}
	\end{tabular}
	\caption{Performance of AFW on $\ell_2$ norm balls (first row) and $\ell_1$ norm balls (second row).}
	 \label{fig.bnd}
	 \vspace{-0.2cm}
\end{figure*}

Logistic regression for binary classification is adopted to test AFW. The objective function is
\begin{equation}\label{eq.test}
	f(\mathbf{x}) =\frac{1}{n} \sum_{i =1}^n \ln \big(1+ \exp(- b_i \langle \mathbf{a}_i, \mathbf{x} \rangle ) \big) 
\end{equation}
where $(\mathbf{a}_i, b_i)$ is the (feature, label) pair of datum $i$ and $n$ is the total number of data samples. Datasets from LIBSVM\footnote{Online available at \url{https://www.csie.ntu.edu.tw/~cjlin/libsvmtools/datasets/binary.html}.} are used in the numerical tests presented. Details regarding the datasets are deferred to Appendix \ref{apdx.last}. The constraint sets considered include $\ell_1$ and $\ell_2$ norm balls. As benchmarks, the chosen algorithms are: projected GD with the standard step size $\frac{1}{L}$; FW with step size $\frac{2}{k+2}$ \citep{jaggi2013}; and projected AGM with parameters according to \citep{allen2014}. The step size of AFW is $\delta_k = \frac{2}{k+3}$ according to Theorems \ref{thm.afw_general} and \ref{thm.afw_acc}.

We first let ${\cal X}$ be an $\ell_2$ norm ball with a large enough radius so that $\mathbf{x}^*$ does not lie on the boundary. This case maps to our result in Theorem \ref{thm.afw_general}, where the convergence rate of AFW is ${\cal O}(\frac{1}{k})$. The performance of AFW is shown in Fig. \ref{fig.l2_int}. On dataset \textit{a9a}, AFW slightly outperforms GD and FW, but is slower than AGM. Evidently, AFW is much more \textit{stable} than FW, as one can see from the shaded areas that illustrate the range of zig-zag. 

Next, we consider the $\ell_2$ norm ball constraint with the constraint activated at the optimal solution. In this case, our result in Theorem \ref{thm.afw_acc} applies and AFW achieves an $\tilde{\cal O}(\frac{1}{k^2})$ convergence rate. The performance of AFW is listed in the first row of Fig. \ref{fig.bnd}. In all tested datasets, AFW significantly improves over FW, while on datasets other than \textit{covtype}, AFW also outperforms AGM, especially on \textit{mushroom}.

When the constraint set is an $\ell_1$ norm ball, the performance of AFW is depicted in the second row of Fig. \ref{fig.bnd}. It can be seen that on datasets such as \textit{covtype} and \textit{mnist}, AFW exhibits performance similar to AGM, which is significantly faster than FW. While on  dataset \textit{mushroom}, AFW converges even faster than AGM. Note that comparing AFW with AGM is not fair since each FW step requires $d$ operations at most, while projection onto an $\ell_1$ norm ball in \citep{duchi2008} takes $cd$ operations for some $c>1$. This means that for the same running time, AFW will run more iterations than AGM. We stick to this unfair comparison to highlight how the optimality error of AFW and AGM evolves with $k$.

\subsection{Matrix completion}

\begin{wrapfigure}{r}{0.6\textwidth}
\begin{minipage}{0.58\textwidth}
\centering
	\vspace{-0.6cm}
	\begin{tabular}{cc}
		\includegraphics[width=.48\textwidth]{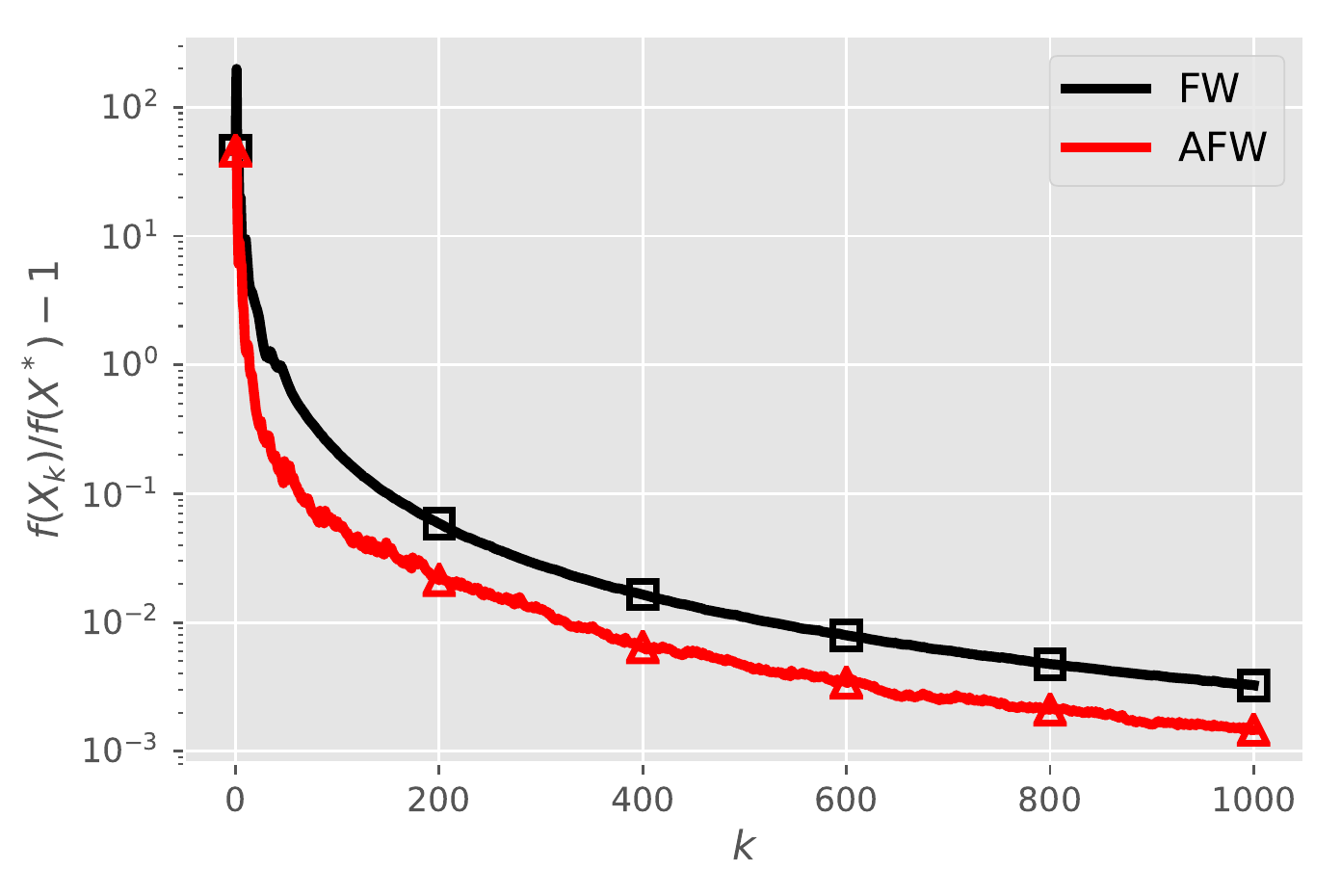}&
		\hspace{-0.3cm}
		\includegraphics[width=.48\textwidth]{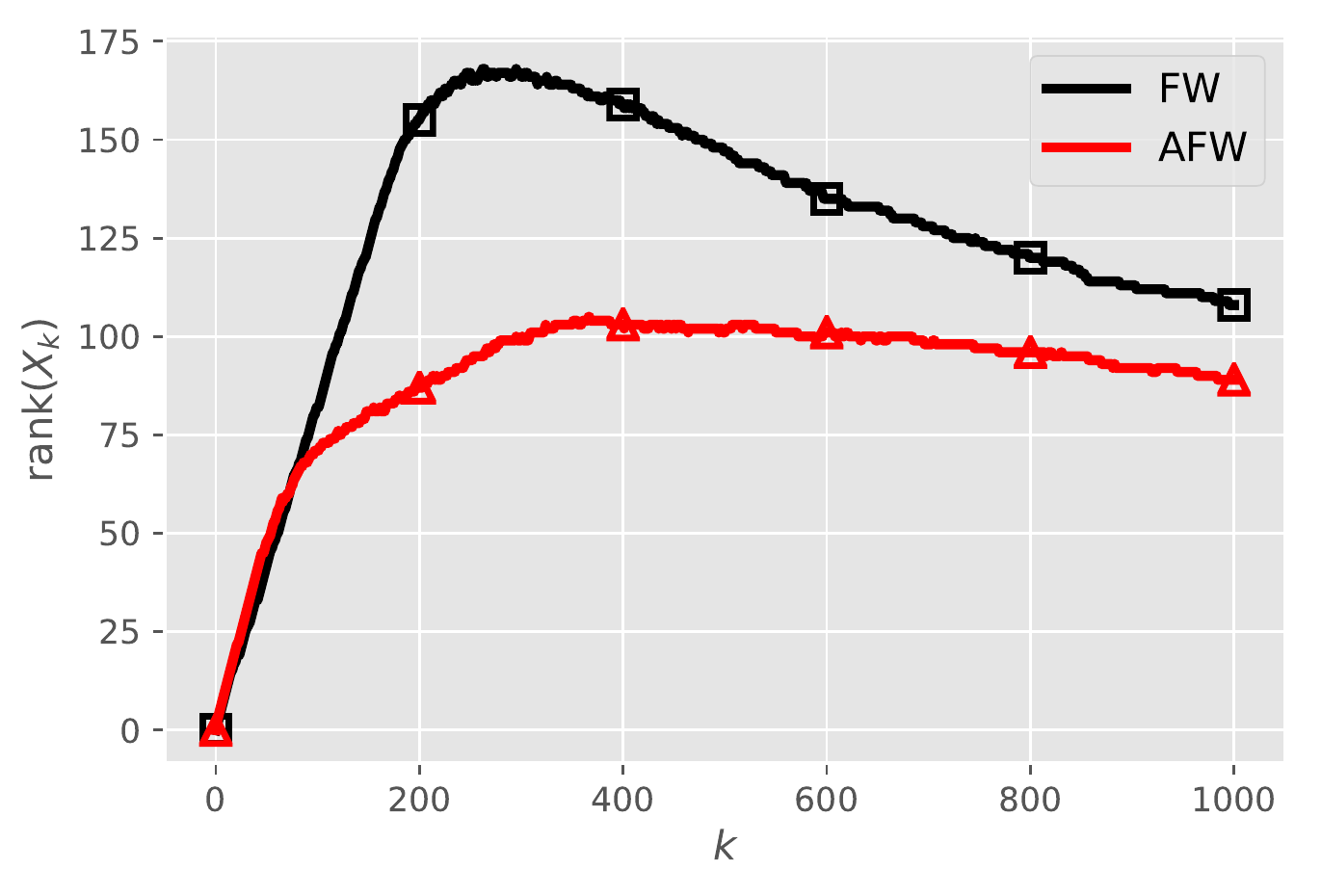} \\
		(a) optimality & (b)	 rank
	\end{tabular}
	\vspace{-0.2cm}
	\caption{Performance of AFW for matrix completion problems.} 
	 \label{fig.mtrx_comp}
	 \vspace{-0.2cm}
\end{minipage}
\end{wrapfigure}

We then consider matrix completion problems that are ubiquitous in recommender systems. Consider a matrix $\mathbf{A} \in \mathbb{R}^{m \times n}$ with partially observed entries, that is, entries $A_{ij}$ for $(i,j) \in {\cal K}$ are known, where ${\cal K} \subset \{1,\ldots,m \} \times \{1,\ldots,n \}$. Note that the observed entries can also be contaminated by noise. The task is to predict the unobserved entries of $\mathbf{A}$. Although this problem can be approached in several ways, within the scope of recommender systems, a commonly adopted empirical observation is that $\mathbf{A}$ is low rank \citep{bennett2007,bell2007,fazel2002}. Hence the problem to be solved is
\begin{align}\label{eq.mtrx_complete_relax}
	\min_{\mathbf{X}} ~~ \frac{1}{2} \sum_{(i,j) \in {\cal K}} (X_{ij} - A_{ij})^2 ~~~	\text{s.t.} ~~ \| \mathbf{X} \|_* \leq R 
\end{align}
where $\| \mathbf{X} \|_*$ denotes the nuclear norm of $\mathbf{X}$, and it is leveraged to promote a low rank solution. Problem \eqref{eq.mtrx_complete_relax} is difficult to be solved via GD or AGM because projection onto a nuclear norm ball is expensive. On the contrary, FW and its variants are more suitable for \eqref{eq.mtrx_complete_relax} given that: i) Theorem \ref{thm.afw_general} can be extended to the case where smoothness is defined w.r.t. any norm; ii) FW step can be solved easily; and iii) the update promotes low-rank solution directly \citep{freund2017}. More on ii) and iii) are discussed in Appendix \ref{apdx.mtrx_completion}.

We test AFW and FW on a widely used dataset, \textit{MovieLens100K}\footnote{Online available at \url{https://grouplens.org/datasets/movielens/100k/}}. GD and AGM are not taking into consideration due to their computational complexities. The numerical performance can be found in Fig. \ref{fig.mtrx_comp}. In subfigures (a) and (b), we plot the optimality error and rank versus $k$ choosing $R = 2.5$. It is observed that AFW exhibits improvement in terms of both optimality error and rank of the solution. In particular, AFW roughly achieves $1.4$x performance improvement compared with FW in terms of optimality error, and finds solutions with much lower rank.

\section{Conclusions}
We built links between the momentum in AGM and the FW step by observing that they are both minimizing an (approximated) lower bound of the objective function. Exploring this link, we show how momentum benefits FW. In particular, a momentum variant of FW, which we term AFW, was proved to achieve a faster rate on active $\ell_p$ norm ball constraints while maintaining the same convergence rate as FW on general problems. AFW thus strictly outperforms FW providing the possibility for acceleration. Numerical experiments validate our theoretical findings, and suggest AFW is promising on for binary classification and matrix completion.

\bibliographystyle{plainnat}
\bibliography{myabrv,datactr}

\newpage
\onecolumn
\appendix

\begin{center}
{\Large \bf Appendix }
\end{center}

%%%%%%%%%%%%%%%%%%%%%%%%%%%%

%%%%%%%%%%%%%%%%%%%%%%%%%%%%%%%%%%%%%%%%%%%%%%%%%%%%%%%%%%%%%%
\section{Proofs for Section 3}
\subsection{Proof of Theorem \ref{thm.cvx_yk}}
\begin{proof}
	The convergence on $\mathbf{x}_k$ is given in \citep{nemirovski2004}, and hence we do not repeat here. Next we show the behavior of $\mathbf{y}_k$ and $\mathbf{v}_k$. 
	
	Define the same surrogate functions with \citep{nemirovski2004} as
	\begin{subequations}\label{eq.phi}
		\begin{align}
		\Phi_0(\mathbf{x}) &= \Phi_0^* + \frac{\mu_0}{2} \| \mathbf{x} - \mathbf{x}_0 \|^2 \\
		\Phi_{k+1} (\mathbf{x}) & = (1- \delta_k) \Phi_k (\mathbf{x})  + \delta_k \Big[ f(\mathbf{y}_k) + \big\langle  \nabla f(\mathbf{y}_k), \mathbf{x} - \mathbf{y}_k  \big\rangle \Big], \forall\, k\geq 0.
		\end{align}
	\end{subequations}
	In \citep{nemirovski2004}, it is shown that with $\lambda_0 = 1$ and 	$\lambda_k = \lambda_{k-1}(1-\delta_{k-1})$, the tuple $\big( \{\Phi_k(\mathbf{x}) \}_{k=0}^\infty, \{ \lambda_k \}_{k=0}^\infty \big)$ is an ES of $f(\mathbf{x})$. In addition, it is also shown that $\Phi_{k+1} (\mathbf{x}) $ can be rewritten as $\Phi_k(\mathbf{x}) = \Phi_k^* + \frac{\mu_k}{2} \| \mathbf{x} - \mathbf{v}_k \|^2$, where $\mu_{k+1} = (1-\delta_k)\mu_k$, and $f(\mathbf{x}_k) \leq \Phi_k^* = \min_{\mathbf{x}} \Phi_k (\mathbf{x})$. We will use these conclusions directly. Rearranging the terms in $\Phi_k(\mathbf{x}) = \Phi_k^* + \frac{\mu_k}{2} \| \mathbf{x} - \mathbf{v}_k \|^2$, we arrive at 
	\begin{align*}
		\frac{1}{2} \| \mathbf{x}- \mathbf{v}_k \|^2 & = \frac{1}{\mu_k} \Big( \Phi_k(\mathbf{x}) - \Phi_k^* \Big) = \frac{1}{\mu_k} \Big( \Phi_k(\mathbf{x}) - f(\mathbf{x}) + f(\mathbf{x}) - \Phi_k^* \Big)  \nonumber \\
		& \stackrel{(a)}{\leq}  \frac{\lambda_k}{\mu_k} \big[  \Phi_0(\mathbf{x}) -  f(\mathbf{x}) \big] + \frac{1}{\mu_k} \big[  f(\mathbf{x}) - f(\mathbf{x}_k) \big] \nonumber \\
		& = \frac{1}{2L} \big[  \Phi_0(\mathbf{x}) -  f(\mathbf{x}) \big] + \frac{1}{\mu_k} \big[  f(\mathbf{x}) - f(\mathbf{x}_k) \big] 
	\end{align*}
	where (a) is because $\Phi_k(\mathbf{x}) - f(\mathbf{x}) \leq \lambda_k \big( \Phi_0(\mathbf{x}) - f(\mathbf{x}) \big)$ by Definition \ref{def.es}, and $f(\mathbf{x}_k) \leq \Phi_k^*$ shown in \citep{nesterov2004}. Choosing $\mathbf{x}$ as $\mathbf{x}^*$, we arrive at
	\begin{align*}
		\frac{1}{2} \| \mathbf{x}^* -  \mathbf{v}_k \|^2 \leq \frac{1}{2L} \big[  \Phi_0(\mathbf{x}^*) -  f(\mathbf{x}^*) \big] - \frac{1}{\mu_k} \big[  f(\mathbf{x}_k) - f(\mathbf{x}^*) \big] \leq \frac{1}{2L} \big[  \Phi_0(\mathbf{x}^*) -  f(\mathbf{x}^*) \big], \forall k.
	\end{align*}	
	This further implies
	\begin{align}\label{eq.aux_useful1}
		\|\mathbf{x}^* - \mathbf{v}_k \|^2   \leq \frac{1}{L} \big[  \Phi_0(\mathbf{x}^*) -  f(\mathbf{x}^*) \big], \forall k.
	\end{align}
	Hence the behavior of $\mathbf{v}_k$ in Theorem \ref{thm.cvx_yk} is proved.
	
	To prove the convergence of $\mathbf{y}_k$, the following inequality is true as a result of \eqref{eq.aux_useful1}
	\begin{align*}
		\| \mathbf{v}_{k+1} - \mathbf{v}_k \| \leq  \| \mathbf{v}_{k+1} - \mathbf{x}^* \| + \|\mathbf{x}^*  - \mathbf{v}_k \|  \leq 2\sqrt{ \frac{1}{L} \big[  \Phi_0(\mathbf{x}^*) -  f(\mathbf{x}^*) \big]}.
	\end{align*}
	Next, we link $\nabla f(\mathbf{y}_k)$ and $\mathbf{v}_{k+1} - \mathbf{v}_k$ through the update $\mathbf{v}_{k+1} = \mathbf{v}_k -  \frac{\delta_k}{\mu_{k+1}} \nabla f(\mathbf{y}_k) $ to get
	\begin{align*}
		\|\mathbf{v}_{k+1} - \mathbf{v}_k \|^2 = \frac{(k+2)^2}{4L^2} \| \nabla f(\mathbf{y}_k) \|^2 \leq  \frac{4}{L} \big[  \Phi_0(\mathbf{x}^*) -  f(\mathbf{x}^*) \big], \forall k.
	\end{align*}
	Rearranging the terms we can obtain the convergence of $\| \nabla f(\mathbf{y}_k) \|^2$, that is,
	\begin{align*}
		 \| \nabla f(\mathbf{y}_k) \|^2 \leq \frac{16 L}{(k+2)^2} \big[  \Phi_0(\mathbf{x}^*) -  f(\mathbf{x}^*) \big].
	\end{align*}
	Plugging $\Phi_0(\mathbf{x}^*) = f(\mathbf{x}_0) + L \|\mathbf{x}_0 - \mathbf{x}^* \|^2$ in completes the proof.
\end{proof}

\subsection{$f(\mathbf{y}_k) + \langle  \nabla f(\mathbf{y}_k), \mathbf{v}_{k+1}\!-\!\mathbf{y}_k \rangle$ approximates $f(\mathbf{x}^*)$}\label{apdx.agm_lb}

We show next that a weighted version of $f(\mathbf{y}_k) + \langle  \nabla f(\mathbf{y}_k), \mathbf{v}_{k+1}\!-\!\mathbf{y}_k \rangle$ is no larger then $f(\mathbf{x}^*)+{\cal O}(\frac{1}{k^2})$ to elaborate that $f(\mathbf{y}_k) + \langle  \nabla f(\mathbf{y}_k), \mathbf{v}_{k+1}\!-\!\mathbf{y}_k \rangle$ is (almost) an under-estimate of $f(\mathbf{x}^*)$.

\begin{theorem}\label{thm.dual}
If Assumptions \ref{as.1} and \ref{as.2} hold, and we choose $ \frac{\mu_{k+1}}{\delta_k}=\frac{2 L}{k+2}$; and per iteration $k$, we let $w_k^{(\tau)} = \frac{2(\tau+2)}{k(k+3)}$ for $\tau = 0,1,\ldots, k-1$, then i) $\sum_{\tau=0}^{k-1 }w_k^{(\tau)} = 1$; and, ii)
	\begin{align*}
		 \sum_{\tau=0}^{k-1} w_k^{(\tau)}  \Big[ f(\mathbf{y}_\tau) & + \langle \nabla f(\mathbf{y}_\tau), \mathbf{v}_{\tau+1} - \mathbf{y}_\tau \rangle \Big]  - f(\mathbf{x}^*)  \leq  \frac{2L \| \mathbf{x}_0 - \mathbf{x}^* \|^2 }{  k(k+3) }.
	\end{align*}
\end{theorem}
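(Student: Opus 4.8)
The plan is to recycle the estimate-sequence (ES) apparatus already built in the proof of Theorem~\ref{thm.cvx_yk}, in particular the quadratic representation $\Phi_k(\mathbf{x}) = \Phi_k^* + \frac{\mu_k}{2}\|\mathbf{x}-\mathbf{v}_k\|^2$ from \eqref{eq.phi}, together with $\mu_{k+1}=(1-\delta_k)\mu_k$, $\mu_0 = 2L$, $\Phi_0^*=f(\mathbf{x}_0)$, and the ES inequality of Definition~\ref{def.es}. Claim~(i) is immediate: $\sum_{\tau=0}^{k-1}(\tau+2)=\frac{k(k+3)}{2}$, so the prefactor $\frac{2}{k(k+3)}$ makes the weights sum to one, which also tells me the left side of (ii) is a genuine weighted average of the hyperplane values.

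For claim~(ii) I would abbreviate $h_\tau := f(\mathbf{y}_\tau)+\langle\nabla f(\mathbf{y}_\tau),\mathbf{v}_{\tau+1}-\mathbf{y}_\tau\rangle$ and first derive a one-step recursion for $\Phi_{\tau+1}^*$. Evaluating the surrogate \eqref{eq.phi} at its minimizer $\mathbf{v}_{\tau+1}$ and inserting the quadratic form of $\Phi_\tau$ gives the identity $\Phi_{\tau+1}^* = (1-\delta_\tau)\Phi_\tau^* + \delta_\tau h_\tau + \frac{\mu_{\tau+1}}{2}\|\mathbf{v}_{\tau+1}-\mathbf{v}_\tau\|^2$. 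Since the quadratic slack is nonnegative, discarding it yields $\delta_\tau h_\tau \le \Phi_{\tau+1}^* - (1-\delta_\tau)\Phi_\tau^*$, which is the inequality in the direction I need.

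Next I would divide this by $\lambda_{\tau+1}$ and telescope. Because $\lambda_{\tau+1}=(1-\delta_\tau)\lambda_\tau$, the right-hand side collapses to $\frac{\Phi_k^*}{\lambda_k}-\Phi_0^*=\frac{\Phi_k^*}{\lambda_k}-f(\mathbf{x}_0)$, while on the left I would compute $\lambda_{\tau+1}=\frac{2}{(\tau+2)(\tau+3)}$ from $\delta_j=\frac{2}{j+3}$ to obtain the clean identity $\frac{\delta_\tau}{\lambda_{\tau+1}}=\tau+2$. Hence $\sum_{\tau=0}^{k-1}(\tau+2)h_\tau \le \frac{\Phi_k^*}{\lambda_k}-f(\mathbf{x}_0)$. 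To close, I bound $\Phi_k^*\le \Phi_k(\mathbf{x}^*)\le (1-\lambda_k)f(\mathbf{x}^*)+\lambda_k\Phi_0(\mathbf{x}^*)$ using Definition~\ref{def.es} and substitute $\Phi_0(\mathbf{x}^*)=f(\mathbf{x}_0)+L\|\mathbf{x}_0-\mathbf{x}^*\|^2$; multiplying through by $\frac{2}{k(k+3)}$ and using $\frac{1-\lambda_k}{\lambda_k}=\frac{k(k+3)}{2}$ (since $\lambda_k=\frac{2}{(k+1)(k+2)}$) makes the coefficient of $f(\mathbf{x}^*)$ collapse to exactly $1$ and cancels the $f(\mathbf{x}_0)$ terms, leaving precisely $f(\mathbf{x}^*)+\frac{2L\|\mathbf{x}_0-\mathbf{x}^*\|^2}{k(k+3)}$.

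The hard part is not any single estimate but the bookkeeping that makes the two cancellations line up: verifying $\frac{\delta_\tau}{\lambda_{\tau+1}}=\tau+2$ so the telescoped weights match the prescribed $w_k^{(\tau)}$, and verifying $\frac{2}{k(k+3)}\cdot\frac{1-\lambda_k}{\lambda_k}=1$ so the $f(\mathbf{x}^*)$ term normalizes exactly. The one genuinely structural observation, which I would flag, is that the nonnegative slack $\frac{\mu_{\tau+1}}{2}\|\mathbf{v}_{\tau+1}-\mathbf{v}_\tau\|^2$ must be dropped (not kept) to get the under-estimate of $f(\mathbf{x}^*)$ in the stated direction.
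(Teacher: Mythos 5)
Your proposal is correct, but it takes a genuinely different route from the paper's. The paper argues directly and locally: it bounds $d_\tau := f(\mathbf{y}_\tau)+\langle \nabla f(\mathbf{y}_\tau), \mathbf{v}_{\tau+1}-\mathbf{y}_\tau\rangle - f(\mathbf{x}^*)$ per iteration by invoking convexity at $\mathbf{y}_\tau$ against $\mathbf{x}^*$, substituting $\nabla f(\mathbf{y}_\tau) = \frac{\mu_{\tau+1}}{\delta_\tau}(\mathbf{v}_\tau - \mathbf{v}_{\tau+1})$ from the momentum update, and applying the polarization identity $2\langle \mathbf{a},\mathbf{b}\rangle = \|\mathbf{a}+\mathbf{b}\|^2-\|\mathbf{a}\|^2-\|\mathbf{b}\|^2$; this makes $\|\mathbf{x}^*-\mathbf{v}_\tau\|^2$ the telescoping Lyapunov term, and multiplying by $(\tau+2)$ and summing gives the result after discarding $-L\|\mathbf{v}_{\tau+1}-\mathbf{v}_\tau\|^2$. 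You instead telescope the normalized surrogate minima $\Phi_\tau^*/\lambda_\tau$: your exact recursion $\Phi_{\tau+1}^* = (1-\delta_\tau)\Phi_\tau^* + \delta_\tau h_\tau + \frac{\mu_{\tau+1}}{2}\|\mathbf{v}_{\tau+1}-\mathbf{v}_\tau\|^2$ is right (evaluate \eqref{eq.phi} at its minimizer $\mathbf{v}_{\tau+1}$ and use $(1-\delta_\tau)\mu_\tau = \mu_{\tau+1}$), the bookkeeping identities $\delta_\tau/\lambda_{\tau+1} = \tau+2$ and $\frac{1-\lambda_k}{\lambda_k} = \frac{k(k+3)}{2}$ check out with $\lambda_k = \frac{2}{(k+1)(k+2)}$, and you drop the nonnegative slack in the correct direction. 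The two arguments discard the same slack and are algebraically cousins (since $\Phi_\tau(\mathbf{x}^*) = \Phi_\tau^* + \frac{\mu_\tau}{2}\|\mathbf{x}^*-\mathbf{v}_\tau\|^2$, your potential secretly contains the paper's), but they buy different things: yours gives a cleaner conceptual reading --- the $\lambda$-weighted estimate-sequence minimum under-estimates $f(\mathbf{x}^*)$ up to ${\cal O}(1/k^2)$, tying Theorem \ref{thm.dual} to the ES narrative and reusing machinery already imported for Theorem \ref{thm.cvx_yk} --- while the paper's proof is self-contained (convexity plus one algebraic identity) and visibly uses only the hypothesis as stated, namely the ratio $\mu_{k+1}/\delta_k = \frac{2L}{k+2}$ and the $\mathbf{v}$-update. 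One caveat worth flagging: your argument additionally needs the full Theorem \ref{thm.cvx_yk} parametrization ($\delta_k=\frac{2}{k+3}$, $\mu_0=2L$, $\mu_{k+1}=(1-\delta_k)\mu_k$, $\Phi_0^*=f(\mathbf{x}_0)$), not just the ratio; this is harmless in context, because within the AGM setting those values are forced by the ratio together with the $\mu$-recursion and $\mu_0=2L$, but you should state it explicitly.
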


\begin{proof} It is easy to verify that $\sum_{\tau=0}^{k-1 }w_k^{(\tau)} = 1$. Next we have
	\begin{align}\label{eq.dual_conv1}
		& ~~~~~ f(\mathbf{y}_k) + \langle \nabla f(\mathbf{y}_k), \mathbf{v}_{k+1} - \mathbf{y}_k \rangle \nonumber \\
		& = f(\mathbf{y}_k) + \langle \nabla f(\mathbf{y}_k), \mathbf{v}_{k+1} - \mathbf{x}^* \rangle  + \langle \nabla f(\mathbf{y}_k), \mathbf{x}^* - \mathbf{y}_k \rangle \nonumber \\
		& \stackrel{(a)}{\leq} f(\mathbf{x}^*) + \langle \nabla f(\mathbf{y}_k), \mathbf{v}_{k+1} - \mathbf{x}^* \rangle \nonumber \\
		& =  f(\mathbf{x}^*)  + \frac{\mu_{k+1}}{\delta_k} \langle \mathbf{v}_k - \mathbf{v}_{k+1}, \mathbf{v}_{k+1} - \mathbf{x}^* \rangle \nonumber \\
		& \stackrel{(b)}{=}  f(\mathbf{x}^*)  + \frac{\mu_{k+1}}{2 \delta_k} \Big[ \| \mathbf{x}^* - \mathbf{v}_k\|^2 -  \| \mathbf{x}^* - \mathbf{v}_{k+1}\|^2  - \| \mathbf{v}_{k+1}- \mathbf{v}_k \|^2 \Big]\nonumber \\
		& \stackrel{(c)}{=}  f(\mathbf{x}^*)  + \frac{L}{k+2} \Big[ \| \mathbf{x}^* - \mathbf{v}_k\|^2 -  \| \mathbf{x}^* - \mathbf{v}_{k+1}\|^2  - \| \mathbf{v}_{k+1}- \mathbf{v}_k \|^2 \Big] 
	\end{align}
	where (a) follows from the convexity of $f$, that is, $ \langle \nabla f(\mathbf{y}_k), \mathbf{x}^* - \mathbf{y}_k \rangle \leq f(\mathbf{x}^*) - f(\mathbf{y}_k)$; (b) uses $2\langle \mathbf{a}, \mathbf{b}\rangle = \| \mathbf{a}+  \mathbf{b}  \|^2 -\| \mathbf{a} \|^2 -\| \mathbf{b} \|^2 $; and (c) is by plugging the value of $\frac{\mu_{k+1}}{\delta_k}$ in. Now, if we define $d_k:= f(\mathbf{y}_k) + \langle \nabla f(\mathbf{y}_k), \mathbf{v}_{k+1} - \mathbf{y}_k \rangle - f(\mathbf{x}^*)$, rearranging \eqref{eq.dual_conv1}, we get
	\begin{align*}
		& ~~~~~ (k+2)d_k \leq L \Big[ \| \mathbf{x}^* - \mathbf{v}_k \|^2 -  \| \mathbf{x}^* -\mathbf{v}_{k+1}\|^2  \Big]  - L \| \mathbf{v}_{k+1}-  \mathbf{v}_k\|^2 \nonumber \\
		& \leq L \Big[ \| \mathbf{x}^* - \mathbf{v}_k \|^2 -  \| \mathbf{x}^* -\mathbf{v}_{k+1}\|^2  \Big] 
	\end{align*}
	Summing over $k$ (and recalling $\mathbf{v}_0 = \mathbf{x}_0$), we arrive at
	\begin{align*}
		\sum_{\tau=0}^{k-1} (\tau+2) d_\tau & \leq L \Big[ \| \mathbf{x}^* -\mathbf{v}_0 \|^2 -  \| \mathbf{x}^* - \mathbf{v}_k\|^2 \Big] \leq L \| \mathbf{x}^* - \mathbf{x}_0\|^2.
	\end{align*}
	By the definition of $w_k^{(\tau)}$, which is $w_k^{(\tau)} = \frac{2(\tau+2)}{k(k+3)}$, we obtain
	\begin{align}\label{eq.phi*2}
		\sum_{\tau=0}^{k-1} w_k^{(\tau)} d_\tau \leq \frac{2L \| \mathbf{x}^*- \mathbf{x}_0\|^2 }{  k(k+3) } 
	\end{align}
	which completes the proof. 
\end{proof}

\subsection{Links with AGM in strongly convex case with FW}\label{sec.sc_agm_fw}
We showcase the connection between the momentum update of AGM in strongly convex case and FW. We first formally define strong convexity, which is used in this subsection only.
\begin{assumption}\label{as.sc}
	(Strong convexity.)
	The function $f: \mathbb{R}^d \rightarrow \mathbb{R}$ is $\mu$-strongly convex; that is, $f(\mathbf{y}) - f(\mathbf{x}) \geq \langle \nabla f(\mathbf{x}), \mathbf{y} - \mathbf{x} \rangle + \frac{\mu}{2}  \|  \mathbf{y} - \mathbf{x}\|^2, \forall\, \mathbf{x}, \mathbf{y} \in \mathbb{R}^d$.
\end{assumption} 
Under Assumptions \ref{as.1} and \ref{as.sc}, the condition number of $f$ is $\kappa:= \frac{L}{\mu}$. To cope with strongly convex problems, Lines 4 -- 6 in AGM (Alg. \ref{alg.agm}) should be modified to \citep{nesterov2004}
\begin{subequations}\label{eq.agm_scvx}
\begin{align}
	\mathbf{y}_k & = \frac{1}{1+\delta}\mathbf{x}_k +  \frac{\delta}{1+\delta} \mathbf{v}_k\label{eq.sc_update_y} \\
	\mathbf{x}_{k+1} & = \mathbf{y}_k - \frac{1}{L}  \nabla f(\mathbf{y}_k)	\label{eq.sc_update_x} \\
	\mathbf{v}_{k+1} & = (1 - \delta) \mathbf{v}_k + \delta \mathbf{y}_k - \frac{\delta}{\mu} \nabla f (\mathbf{y}_k)\label{eq.sc_update_v}. 
\end{align}
\end{subequations}
where $\delta = \frac{1}{\sqrt{\kappa}}$. Here $\mathbf{v}_{k+1}$ in \eqref{eq.sc_update_v} denotes the momentum and thus plays the critical role for acceleration. To see how $\mathbf{v}_{k+1}$ is linked with FW, we will rewrite $\mathbf{v}_{k+1}$ as
\begin{subequations}\label{eq.agm_scvx_vvv}
\begin{align}
	\mathbf{z}_{k+1} & = \argmin_{\mathbf{x}} f(\mathbf{y}_k) + \langle \nabla f(\mathbf{y}_k), \mathbf{x} - \mathbf{y}_k \rangle + \frac{\mu}{2}  \|  \mathbf{x} - \mathbf{y}_k\|^2  = \mathbf{y}_k - \frac{1}{\mu} \mathbf{y}_k \\
	\mathbf{v}_{k+1} & = (1 - \delta) \mathbf{v}_k + \delta \mathbf{z}_{k+1}
\end{align}
\end{subequations}
Notice that $\mathbf{z}_{k+1}$ is the minimizer of a lower bound of $f(\mathbf{x})$ (due to strongly convexity). Therefore, the $\mathbf{v}_{k+1}$ update is similar to FW in the sense that it first minimizes a lower bound of $f(\mathbf{x})$, then update through convex combination (cf Alg. \ref{alg.fw}). This demonstrates that the momentum update in AGM shares the same idea of FW update.

%%%%%%%%%%%%%%%%%%%%%%%%%%%%%%%%%%%%%%%%%%%%%%%%%%%%%%%%%%%%%%%%%%%%%%%%%%%%%%%%%%%%%%%%%%%

\section{Proofs for Section \ref{sec.afw}}\label{sec.pf_afw}
\subsection{Properties of ES in \eqref{eq.phi_fw}}
A few basic lemmas for all the proofs in Section \ref{sec.afw} are provided below.

\textbf{Proof of Lemma \ref{lemma.es_fw}.}
\begin{proof}
	We show this by induction. Because $\lambda_0 =1$, it holds that $\Phi_0(\mathbf{x}) = (1 - \lambda_0) f(\mathbf{x}) + \lambda_0\Phi_0(\mathbf{x}) = \Phi_0(\mathbf{x})$. Suppose that $\Phi_k (\mathbf{x}) \leq (1 - \lambda_k)	 f(\mathbf{x}) + \lambda_k \Phi_0 (\mathbf{x})$ is true for some $k$. We have
	\begin{align*}
		\Phi_{k+1}(\mathbf{x}) & = (1- \delta_k) \Phi_k (\mathbf{x}) + \delta_k \Big[ f(\mathbf{y}_k) + \big\langle  \nabla f(\mathbf{y}_k), \mathbf{x} - \mathbf{y}_k  \big\rangle \Big] \\
		& \stackrel{(a)}{\leq} (1- \delta_k) \Phi_k (\mathbf{x}) + \delta_k f(\mathbf{x}) \\
		& \leq (1- \delta_k) \Big[ (1 - \lambda_k)	 f(\mathbf{x}) + \lambda_k \Phi_0 (\mathbf{x}) \Big] + \delta_k f(\mathbf{x}) \\
		& = (1 - \lambda_{k+1})	 f(\mathbf{x}) + \lambda_{k+1} \Phi_0 (\mathbf{x})
	\end{align*}
	where (a) is because the convexity of $f$; and the last equation is by definition of $\lambda_{k+1}$. Together with the fact that $\lim_{k\rightarrow\infty} \lambda_k = 0$, the tuple $\big( \{\Phi_k(\mathbf{x}) \}_{k=0}^\infty, \{ \lambda_k \}_{k=0}^\infty \big)$ satisfies the definition of an estimate sequence.
\end{proof}

\begin{lemma}\label{lemma.est_seq_conv}
	For $\{\Phi_k(\mathbf{x}) \}$ in \eqref{eq.phi_fw}, if $f(\mathbf{x}_k) \leq \min_{\mathbf{x} \in {\cal X}} \Phi_k(\mathbf{x}) + \xi_k$, it is true that
	\begin{align*}
		f(\mathbf{x}_k) - f(\mathbf{x}^*) \leq \lambda_k \big( f(\mathbf{x}_0) - f(\mathbf{x}^*) \big) + \xi_k, \forall\, k.
	\end{align*}
\end{lemma}
\begin{proof} 
	If $f(\mathbf{x}_k) \leq \min_{\mathbf{x} \in {\cal X}} \Phi_k(\mathbf{x}) + \xi_k$ holds, then we have
	\begin{align*}
		f(\mathbf{x}_k) \leq \min_{\mathbf{x} \in {\cal X} } \Phi_k(\mathbf{x}) + \xi_k \leq \Phi_k(\mathbf{x}^*)  + \xi_k \leq (1 - \lambda_k)	 f(\mathbf{x}^*) + \lambda_k \Phi_0 (\mathbf{x}^*) + \xi_k
	\end{align*}
	where the last inequality is because Definition \ref{def.es}. Subtracting $f(\mathbf{x}^*) $ on both sides, we arrive at
	\begin{align*}
		f(\mathbf{x}_k) - f(\mathbf{x}^*) \leq  \lambda_k \big( \Phi_0 (\mathbf{x}^*) - f(\mathbf{x}^*) \big) + \xi_k = \lambda_k \big( f(\mathbf{x}_0) - f(\mathbf{x}^*) \big) + \xi_k
	\end{align*}
	which completes the proof.
\end{proof}

\begin{lemma}\label{lemma.fw_rewrite_phi}
	Let $\mathbf{v}_0 = \mathbf{x}_0$, $\bm{\theta}_0 = \bm{0}$, $\Phi_0^* = f(\mathbf{x}_0)$, then $\Phi_{k+1}(\mathbf{x})$ in \eqref{eq.phi_fw} can be rewritten as 
	\begin{align}
		\Phi_{k+1}(\mathbf{x}) = \Phi_{k+1}^* +	\langle \mathbf{x} - \mathbf{v}_{k+1}, \bm{\theta}_{k+1} \rangle
	\end{align}
	with
	\begin{subequations}
	\begin{align}
		\bm{\theta}_{k+1} & = \delta_k \nabla f(\mathbf{y}_k) + (1-\delta_k) \bm{\theta}_k  \label{eq.fw_theta} \\
		\mathbf{v}_{k+1} &:= \argmin_{\mathbf{x} \in {\cal X}} \Phi_{k+1} (\mathbf{x}) = \argmin_{\mathbf{x} \in {\cal X}} \langle \mathbf{x}, \bm{\theta}_{k+1} \rangle \label{eq.fw_v} \\
		\Phi_{k+1}^* & := \min_{\mathbf{x} \in {\cal X}} \Phi_{k+1} (\mathbf{x}) = \Phi_{k+1}(\mathbf{v}_{k+1}) \label{eq.fw_phi*} \\
		& = (1- \delta_k) \Phi_k^* + \delta_k  f(\mathbf{y}_k) + (1-\delta_k) \langle \bm{\theta}_k, \mathbf{v}_{k+1} - \mathbf{v}_k \rangle + \delta_k \langle \nabla f(\mathbf{y}_k), \mathbf{v}_{k+1} - \mathbf{y}_k \rangle. \nonumber
	\end{align}
	\end{subequations}
\end{lemma}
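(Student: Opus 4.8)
The plan is to prove all four displayed identities simultaneously by induction on $k$, the driving observation being that every $\Phi_k$ produced by the recursion \eqref{eq.phi_fw} is an \emph{affine} function of $\mathbf{x}$. Indeed, $\Phi_0 \equiv f(\mathbf{x}_0)$ is constant, and the recursion \eqref{eq.6b} adds to a scaled copy of $\Phi_k$ only the affine supporting hyperplane $f(\mathbf{y}_k)+\langle \nabla f(\mathbf{y}_k), \mathbf{x}-\mathbf{y}_k\rangle$; since a convex combination of affine functions is again affine, $\Phi_{k+1}$ is affine. Once affineness is in hand, every claim reduces to reading off the gradient of $\Phi_{k+1}$ and using the fact that the minimizer of an affine function over the compact set ${\cal X}$ is exactly the FW step.

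For the base case I would verify that $\Phi_0(\mathbf{x}) = \Phi_0^* + \langle \mathbf{x}-\mathbf{v}_0, \bm{\theta}_0\rangle$ holds under the stated initialization $\mathbf{v}_0=\mathbf{x}_0$, $\bm{\theta}_0=\mathbf{0}$, $\Phi_0^*=f(\mathbf{x}_0)$: since $\bm{\theta}_0=\mathbf{0}$ the inner product vanishes and the identity is merely $f(\mathbf{x}_0)=f(\mathbf{x}_0)$. For the inductive step I assume $\Phi_k(\mathbf{x})=\Phi_k^*+\langle \mathbf{x}-\mathbf{v}_k, \bm{\theta}_k\rangle$ and substitute into \eqref{eq.6b}. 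Collecting the terms linear in $\mathbf{x}$ shows that the gradient of the affine function $\Phi_{k+1}$ equals $(1-\delta_k)\bm{\theta}_k+\delta_k\nabla f(\mathbf{y}_k)$, which is precisely $\bm{\theta}_{k+1}$ of \eqref{eq.fw_theta} (and matches Line 4 of Alg. \ref{alg.afw}). Because an affine function on ${\cal X}$ is minimized exactly where its linear part is minimized, $\argmin_{\mathbf{x}\in{\cal X}}\Phi_{k+1}(\mathbf{x})=\argmin_{\mathbf{x}\in{\cal X}}\langle \mathbf{x},\bm{\theta}_{k+1}\rangle=\mathbf{v}_{k+1}$, giving \eqref{eq.fw_v}, which is exactly the FW step of Line 5. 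Writing this affine function relative to the point $\mathbf{v}_{k+1}$ yields $\Phi_{k+1}(\mathbf{x})=\Phi_{k+1}(\mathbf{v}_{k+1})+\langle \mathbf{x}-\mathbf{v}_{k+1},\bm{\theta}_{k+1}\rangle$, and setting $\Phi_{k+1}^*:=\Phi_{k+1}(\mathbf{v}_{k+1})=\min_{\mathbf{x}\in{\cal X}}\Phi_{k+1}(\mathbf{x})$ delivers the target representation and closes the induction on the representation.

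It remains to derive the explicit recursion \eqref{eq.fw_phi*} for $\Phi_{k+1}^*$. I would evaluate \eqref{eq.6b} at $\mathbf{x}=\mathbf{v}_{k+1}$ to obtain $\Phi_{k+1}^*=(1-\delta_k)\Phi_k(\mathbf{v}_{k+1})+\delta_k[f(\mathbf{y}_k)+\langle \nabla f(\mathbf{y}_k),\mathbf{v}_{k+1}-\mathbf{y}_k\rangle]$, and then substitute the inductive representation of $\Phi_k$ evaluated at $\mathbf{v}_{k+1}$, namely $\Phi_k(\mathbf{v}_{k+1})=\Phi_k^*+\langle \bm{\theta}_k,\mathbf{v}_{k+1}-\mathbf{v}_k\rangle$. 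Collecting terms reproduces \eqref{eq.fw_phi*} verbatim.

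The argument is essentially bookkeeping, so there is no deep obstacle; the only place demanding care is this last step, where $\Phi_k$ must be evaluated at $\mathbf{v}_{k+1}$ rather than at its own minimizer $\mathbf{v}_k$, which is exactly why the cross term $\langle \bm{\theta}_k,\mathbf{v}_{k+1}-\mathbf{v}_k\rangle$ survives in \eqref{eq.fw_phi*}. One should also note that $\mathbf{v}_{k+1}$ need not be unique, since the linear objective may be minimized along an entire face of ${\cal X}$; however, both the representation of $\Phi_{k+1}$ and the value $\Phi_{k+1}^*$ are independent of which minimizer is selected, so the induction goes through for any consistent choice of $\mathbf{v}_{k+1}$.
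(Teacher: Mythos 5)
Your proposal is correct and follows essentially the same route as the paper's own proof: induction on $k$, observing that each $\Phi_k$ is affine, reading off the slope $\bm{\theta}_{k+1}=(1-\delta_k)\bm{\theta}_k+\delta_k\nabla f(\mathbf{y}_k)$ from the recursion, and evaluating at $\mathbf{v}_{k+1}$ (using $\Phi_k(\mathbf{v}_{k+1})=\Phi_k^*+\langle\bm{\theta}_k,\mathbf{v}_{k+1}-\mathbf{v}_k\rangle$) to obtain the recursion for $\Phi_{k+1}^*$. Your closing remark that the representation and $\Phi_{k+1}^*$ are independent of which minimizer $\mathbf{v}_{k+1}$ is chosen is a small but welcome addition beyond what the paper states explicitly.
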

\begin{proof}
	We prove this lemma by induction. First $\Phi_0(\mathbf{x}) = \Phi_0^* + \langle \mathbf{x} - \mathbf{v}_0, \bm{\theta}_0 \rangle \equiv  f(\mathbf{x}_0)$. From \eqref{eq.phi_fw} it is obvious that $\Phi_k(\mathbf{x})$ is linear in $\mathbf{x}$, and hence suppose that $\Phi_k(\mathbf{x}) = \Phi_k^* + \langle \mathbf{x} - \mathbf{v}_k, \bm{\theta}_k \rangle$ holds for some $k$. Then we will show that $\Phi_{k+1}(\mathbf{x}) = \Phi_{k+1}^* + \langle \mathbf{x} - \mathbf{v}_{k+1}, \bm{\theta}_{k+1} \rangle$ is true. Consider that
	\begin{align}\label{eq.new1}
		\Phi_{k+1} (\mathbf{x}) & = (1- \delta_k) \Phi_k (\mathbf{x})  + \delta_k \Big[ f(\mathbf{y}_k) + \big\langle  \nabla f(\mathbf{y}_k), \mathbf{x} - \mathbf{y}_k  \big\rangle \Big]  \\
		& = (1- \delta_k) \Phi_k^* + (1- \delta_k) \langle \mathbf{x} - \mathbf{v}_k, \bm{\theta}_k \rangle + \delta_k f(\mathbf{y}_k) + \delta_k \big\langle  \nabla f(\mathbf{y}_k), \mathbf{x} - \mathbf{y}_k  \big\rangle \nonumber \\
		& = (1- \delta_k) \Phi_k^* + \delta_k f(\mathbf{y}_k) + \big\langle \mathbf{x}, (1-\delta_k) \bm{\theta}_k + \delta_k \nabla f(\mathbf{y}_k) \big\rangle - (1- \delta_k) \langle \mathbf{v}_k, \bm{\theta}_k \rangle - \delta_k \big\langle  \nabla f(\mathbf{y}_k),  \mathbf{y}_k  \big\rangle.\nonumber
	\end{align}
	Clearly, since $\Phi_{k+1} (\mathbf{x})$ is linear in $\mathbf{x}$, the slope is $\bm{\theta}_{k+1}:= (1-\delta_k) \bm{\theta}_k + \delta_k \nabla f(\mathbf{y}_k)$. In addition, because $\mathbf{v}_{k+1}$ is defined as the minimizer of $\Phi_{k+1}(\mathbf{x})$ over ${\cal X}$, from \eqref{eq.new1} we have $\mathbf{v}_{k+1} = \argmin_{\mathbf{x} \in {\cal X}} \langle \mathbf{x}, \bm{\theta}_{k+1} \rangle $. Then, since $\Phi_{k+1}^*$ is defined as $\Phi_{k+1}^* := \min_{\mathbf{x} \in {\cal X}} \Phi_{k+1} (\mathbf{x}) $, by plugging $\mathbf{v}_{k+1}$ into $\Phi_{k+1}(\mathbf{x})$ in \eqref{eq.new1}, we have
	\begin{align*}
		& ~~~~~ \Phi_{k+1}^* = 	\Phi_{k+1} (\mathbf{v}_{k+1}) \\
		& = (1- \delta_k) \Phi_k^* + (1- \delta_k) \langle \mathbf{v}_{k+1} - \mathbf{v}_k, \bm{\theta}_k \rangle + \delta_k f(\mathbf{y}_k) + \delta_k \big\langle  \nabla f(\mathbf{y}_k), \mathbf{v}_{k+1} - \mathbf{y}_k  \big\rangle.
	\end{align*}
	The proof is thus completed.
\end{proof}

\textbf{Proof of Lemma \ref{lemma.fw_fx_phi}.}
\begin{proof}
	We prove this lemma by induction. First by definition $f(\mathbf{x}_0) = \Phi_0^* + \xi_0$. Suppose now we have $f(\mathbf{x}_k) \leq \Phi_k^* + \xi_k$ for some $k$. Next, we will show that $f(\mathbf{x}_{k+1}) \leq \Phi_{k+1}^* + \xi_{k+1}$. 
	
	Using \eqref{eq.fw_phi*}, we have
	\begin{align*}
	\Phi_{k+1}^* &= (1- \delta_k) \Phi_k^* + \delta_k  f(\mathbf{y}_k) + (1-\delta_k) \langle \bm{\theta}_k, \mathbf{v}_{k+1} - \mathbf{v}_k \rangle + \delta_k \langle \nabla f(\mathbf{y}_k), \mathbf{v}_{k+1} - \mathbf{y}_k \rangle  \\
	& \stackrel{(a)}{\geq} (1- \delta_k) f(\mathbf{x}_k) + \delta_k  f(\mathbf{y}_k) + (1-\delta_k) \langle \bm{\theta}_k, \mathbf{v}_{k+1} - \mathbf{v}_k \rangle + \delta_k \langle \nabla f(\mathbf{y}_k), \mathbf{v}_{k+1} - \mathbf{y}_k \rangle - (1-\delta_k) \xi_k  \\
	& \stackrel{(b)}{\geq}   (1- \delta_k) f(\mathbf{x}_k) + \delta_k  f(\mathbf{y}_k)  + \delta_k \langle \nabla f(\mathbf{y}_k), \mathbf{v}_{k+1} - \mathbf{y}_k \rangle - (1-\delta_k) \xi_k  \\
	& = f(\mathbf{y}_k) + (1-\delta_k) \big[ f(\mathbf{x}_k) - f(\mathbf{y}_k) \big] + \delta_k \langle \nabla f(\mathbf{y}_k), \mathbf{v}_{k+1} - \mathbf{y}_k \rangle - (1-\delta_k) \xi_k \\
	& \stackrel{(c)}{\geq} f(\mathbf{y}_k) + (1-\delta_k) \big\langle \nabla f(\mathbf{y}_k) , \mathbf{x}_k - \mathbf{y}_k \big\rangle + \delta_k \langle \nabla f(\mathbf{y}_k), \mathbf{v}_{k+1} - \mathbf{y}_k \rangle - (1-\delta_k) \xi_k  \\
	& \stackrel{(d)}{\geq} f(\mathbf{x}_{k+1}) - \frac{L}{2} \| \mathbf{x}_{k+1} - \mathbf{y}_k \|^2 + \langle \nabla f(\mathbf{y}_k), \mathbf{y}_k - \mathbf{x}_{k+1} \rangle \\
	& ~~~~~~~~~~~~~~~~~~ + (1-\delta_k) \big\langle \nabla f(\mathbf{y}_k) , \mathbf{x}_k - \mathbf{y}_k \big\rangle + \delta_k \langle \nabla f(\mathbf{y}_k), \mathbf{v}_{k+1} - \mathbf{y}_k \rangle - (1-\delta_k) \xi_k  \\
	& \stackrel{(e)}{=} f(\mathbf{x}_{k+1}) - \frac{L}{2} \| \mathbf{x}_{k+1} - \mathbf{y}_k \|^2 - (1-\delta_k) \xi_k  \\
	& \stackrel{(f)}{=} f(\mathbf{x}_{k+1}) - \frac{L \delta_k^2 }{2} \| \mathbf{v}_{k+1} - \mathbf{v}_k \|^2 - (1-\delta_k) \xi_k 
	\end{align*}
	where (a) is because $ \Phi_k^* \geq f(\mathbf{x}_k) - \xi_k$; (b) is by the fact $\mathbf{v}_k = \argmin_{\mathbf{x} \in {\cal X}} \langle \bm{\theta}_k, \mathbf{x}\rangle $ so that $\langle \bm{\theta}_k, \mathbf{v}_{k+1} - \mathbf{v}_k \rangle \geq 0$; (c) is because of the convexity of $f$; (d) is by Assumption \ref{as.1}, that is $f(\mathbf{x}_{k+1}) - f(\mathbf{y}_k) \leq \langle \nabla f(\mathbf{y}_k), \mathbf{x}_{k+1} - \mathbf{y}_k \rangle  + \frac{L}{2} \| \mathbf{x}_{k+1} - \mathbf{y}_k \|^2$; (e) follows from the choice of $\mathbf{x}_{k+1} = (1-\delta_k) \mathbf{x}_k + \delta_k \mathbf{v}_{k+1}$; and (f) is by the choice of $\mathbf{y}_k = (1 - \delta_k) \mathbf{x}_k + \delta_k \mathbf{v}_k$. Plugging the definition of $\xi_{k+1}$, the proof is completed.
\end{proof}

\subsection{Proof of Theorem \ref{thm.afw_general}}
\begin{proof}
	Since Lemma \ref{lemma.fw_fx_phi} holds, one can directly apply Lemma \ref{lemma.est_seq_conv} to have
	\begin{align}\label{eq.new2}
		f(\mathbf{x}_k) - f(\mathbf{x}^*) & \leq  \lambda_k \big( f (\mathbf{x}_0) - f(\mathbf{x}^*) \big)  + \xi_k  = \frac{ 2\big( f(\mathbf{x}_0) - f(\mathbf{x}^*) \big) }{(k+1)(k+2)} + \xi_k
	\end{align}
	where $\xi_k$ is defined in Lemma \ref{lemma.fw_fx_phi}. Clearly, $\xi_k \geq 0, \forall k$, and we can find an upper bound for it in the following manner.
	\begin{align*}
		\xi_k & = (1- \delta_{k-1}) \xi_{k-1} + \frac{L \delta_{k-1}^2}{2} \| \mathbf{v}_k - \mathbf{v}_{k-1} \|^2 \nonumber \\
		& \leq (1- \delta_{k-1})\xi_{k-1} + \frac{L D^2 \delta_{k-1}^2}{2} 	 = \frac{LD^2}{2} \sum_{\tau=0}^{k-1} \delta_\tau^2 \bigg[ \prod_{j=\tau+1}^{k-1} (1 - \delta_j) \bigg] \\
		& = \frac{LD^2}{2} \sum_{\tau=0}^{k-1} \frac{4}{(\tau + 3)^2} \frac{(\tau+2)(\tau+3)}{(k+1)(k+2)} \leq \frac{2 LD^2}{k+2}.
	\end{align*}
	Plugging $\xi_k$ into \eqref{eq.new2} completes the proof.
\end{proof}

\subsection{Proof of Theorem \ref{thm.afw_acc}}
The basic idea is to show that under Assumptions \ref{as.1}, \ref{as.2}, \ref{as.3} and \ref{as.4}, $\| \mathbf{v}_k - \mathbf{v}_{k+1}\|^2$ is small enough when $k$ is large. To this end, we will make use of the following lemmas.

\begin{lemma}\label{lemma.nesterov}
	\citep[Theorem 2.1.5]{nesterov2004} If Assumptions \ref{as.1} and \ref{as.2} hold, then it is true that
	\begin{align*}
		\frac{1}{2L} \| \nabla f(\mathbf{x}) - \nabla f(\mathbf{y}) \|^2 \leq f(\mathbf{y}) -  f(\mathbf{x})	- \langle \nabla f(\mathbf{x}), \mathbf{y} - \mathbf{x} \rangle.
	\end{align*}
\end{lemma}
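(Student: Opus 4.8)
The plan is to reduce Lemma \ref{lemma.nesterov} to the standard quadratic ``descent lemma'' for $L$-smooth functions, combined with the convexity supplied by Assumption \ref{as.2}, via an auxiliary function that shifts $f$ by a linear term so that $\mathbf{x}$ becomes a global minimizer. First I would record the upper bound implied by Assumption \ref{as.1}: for all $\mathbf{z}, \mathbf{w} \in \mathbb{R}^d$,
\[
	f(\mathbf{z}) \leq f(\mathbf{w}) + \langle \nabla f(\mathbf{w}), \mathbf{z} - \mathbf{w}\rangle + \frac{L}{2}\|\mathbf{z} - \mathbf{w}\|^2,
\]
which follows by writing $f(\mathbf{z}) - f(\mathbf{w}) = \int_0^1 \langle \nabla f(\mathbf{w} + t(\mathbf{z}-\mathbf{w})), \mathbf{z}-\mathbf{w}\rangle\, dt$ and bounding the integrand through the $L$-Lipschitz gradient inequality.

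Next, fixing $\mathbf{x}$, I would introduce the auxiliary function $\phi(\mathbf{z}) := f(\mathbf{z}) - \langle \nabla f(\mathbf{x}), \mathbf{z}\rangle$. By Assumption \ref{as.2} this $\phi$ is convex; it inherits the $L$-Lipschitz gradient of $f$ since $\nabla \phi(\mathbf{z}) = \nabla f(\mathbf{z}) - \nabla f(\mathbf{x})$; and $\nabla\phi(\mathbf{x}) = \mathbf{0}$, so convexity forces $\mathbf{x}$ to be a \emph{global} minimizer of $\phi$. Applying the descent lemma above to $\phi$ centered at the point $\mathbf{y}$ and then minimizing its right-hand side over $\mathbf{z}$ (the minimizer being $\mathbf{z}^\star = \mathbf{y} - \frac{1}{L}\nabla\phi(\mathbf{y})$) yields
\[
	\phi(\mathbf{x}) \leq \phi(\mathbf{z}^\star) \leq \phi(\mathbf{y}) - \frac{1}{2L}\|\nabla\phi(\mathbf{y})\|^2.
\]

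Finally, I would substitute the definitions back, using $\phi(\mathbf{x}) - \phi(\mathbf{y}) = f(\mathbf{x}) - f(\mathbf{y}) - \langle \nabla f(\mathbf{x}), \mathbf{x} - \mathbf{y}\rangle$ together with $\nabla\phi(\mathbf{y}) = \nabla f(\mathbf{y}) - \nabla f(\mathbf{x})$, and rearrange to recover exactly the claimed inequality. There is no serious obstacle here, as this is a classical result; the only point requiring care is justifying that $\mathbf{x}$ is a \emph{global} rather than merely stationary minimizer of $\phi$, which is precisely where Assumption \ref{as.2} enters, and making sure the descent lemma is invoked for the shifted function $\phi$ rather than for $f$ directly.
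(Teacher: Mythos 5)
Your proposal is correct and is precisely the classical argument behind the cited result: the paper offers no proof of this lemma, deferring instead to \citep[Theorem 2.1.5]{nesterov2004}, and your auxiliary-function construction $\phi(\mathbf{z}) = f(\mathbf{z}) - \langle \nabla f(\mathbf{x}), \mathbf{z}\rangle$, with $\mathbf{x}$ a global minimizer by convexity and the descent lemma minimized at $\mathbf{z}^\star = \mathbf{y} - \frac{1}{L}\nabla\phi(\mathbf{y})$, is exactly how Nesterov proves it there. All steps check out, including the point you correctly flag as the crux --- that Assumption \ref{as.2} upgrades stationarity of $\phi$ at $\mathbf{x}$ to global minimality.
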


Next we show that the value of $\nabla f(\mathbf{x}^*)$ is unique.

\begin{lemma}\label{lemma.equal}
	If both $\mathbf{x}_1^*$ and $\mathbf{x}_2^*$ minimize $f(\mathbf{x})$ over ${\cal X}$, then we have $ \nabla f(\mathbf{x}_1^*) = \nabla f(\mathbf{x}_2^*) $.
\end{lemma}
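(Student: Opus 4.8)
The plan is to combine the co-coercivity inequality of Lemma \ref{lemma.nesterov} with the first-order optimality condition for constrained minimization. The key starting observation is that since both $\mathbf{x}_1^*$ and $\mathbf{x}_2^*$ minimize $f$ over ${\cal X}$, they attain the same optimal value, i.e. $f(\mathbf{x}_1^*) = f(\mathbf{x}_2^*)$. This lets me cancel the function-value difference that appears on the right-hand side of Lemma \ref{lemma.nesterov}, reducing the problem to controlling a single inner product.

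Concretely, I would first invoke Lemma \ref{lemma.nesterov} with $\mathbf{x} = \mathbf{x}_1^*$ and $\mathbf{y} = \mathbf{x}_2^*$, obtaining
\begin{align*}
	\frac{1}{2L} \big\| \nabla f(\mathbf{x}_1^*) - \nabla f(\mathbf{x}_2^*) \big\|^2 \leq f(\mathbf{x}_2^*) - f(\mathbf{x}_1^*) - \big\langle \nabla f(\mathbf{x}_1^*), \mathbf{x}_2^* - \mathbf{x}_1^* \big\rangle.
\end{align*}
The first two terms on the right cancel because $f(\mathbf{x}_1^*) = f(\mathbf{x}_2^*)$. For the remaining inner product I would use the variational-inequality characterization of a constrained minimizer: since $\mathbf{x}_1^*$ minimizes the convex $f$ over the convex set ${\cal X}$, it holds that $\langle \nabla f(\mathbf{x}_1^*), \mathbf{x} - \mathbf{x}_1^* \rangle \geq 0$ for every $\mathbf{x} \in {\cal X}$, and in particular for $\mathbf{x} = \mathbf{x}_2^* \in {\cal X}$. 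Hence $-\langle \nabla f(\mathbf{x}_1^*), \mathbf{x}_2^* - \mathbf{x}_1^* \rangle \leq 0$, so the entire right-hand side is non-positive.

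Putting these together yields $\frac{1}{2L} \| \nabla f(\mathbf{x}_1^*) - \nabla f(\mathbf{x}_2^*) \|^2 \leq 0$. Since the left-hand side is a non-negative multiple of a squared norm, it must equal zero, forcing $\nabla f(\mathbf{x}_1^*) = \nabla f(\mathbf{x}_2^*)$. Equivalently, one could apply Lemma \ref{lemma.nesterov} in both directions and add the two inequalities, which turns the right-hand side into $\langle \nabla f(\mathbf{x}_2^*) - \nabla f(\mathbf{x}_1^*), \mathbf{x}_2^* - \mathbf{x}_1^* \rangle$; this quantity is again non-positive, now by the two optimality conditions applied symmetrically, so the single-sided version above is slightly shorter. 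There is no genuine obstacle in this argument: the only point requiring care is invoking the \emph{correct} first-order optimality condition, namely the variational inequality $\langle \nabla f(\mathbf{x}_1^*), \mathbf{x} - \mathbf{x}_1^* \rangle \geq 0$ rather than the unconstrained stationarity $\nabla f(\mathbf{x}_1^*) = \mathbf{0}$, because the minimizers relevant to this lemma may lie on the boundary of ${\cal X}$ (indeed, Assumption \ref{as.4} anticipates exactly this situation).
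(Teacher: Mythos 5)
Your proof is correct and follows essentially the same route as the paper: both apply Lemma \ref{lemma.nesterov} with $\mathbf{x} = \mathbf{x}_1^*$, $\mathbf{y} = \mathbf{x}_2^*$, cancel the function-value difference using $f(\mathbf{x}_1^*) = f(\mathbf{x}_2^*)$, and bound the remaining inner product via the variational inequality $\langle \nabla f(\mathbf{x}_1^*), \mathbf{x} - \mathbf{x}_1^* \rangle \geq 0$ for all $\mathbf{x} \in {\cal X}$. Your closing remark about using the variational inequality rather than unconstrained stationarity is exactly the point the paper's step (a) relies on.
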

\begin{proof}
	From Lemma \ref{lemma.nesterov}, we have
	\begin{align*}
		\frac{1}{2L} \| \nabla f(\mathbf{x}_2^*) - \nabla f(\mathbf{x}_1^*) \|_2^2 & \leq f(\mathbf{x}_2^*) -  f(\mathbf{x}_1^*)	- \langle \nabla f(\mathbf{x}_1^*), \mathbf{x}_2^* - \mathbf{x}_1^* \rangle	 \stackrel{(a)}{\leq} f(\mathbf{x}_2^*) - f(\mathbf{x}_1^*) = 0
	\end{align*}
	where (a) is by the optimality condition, that is, $\langle \nabla f(\mathbf{x}_1^*), \mathbf{x} - \mathbf{x}_1^* \rangle \geq 0, \forall \mathbf{x} \in {\cal X}$. Hence we can only have $\nabla f(\mathbf{x}_2^*) = \nabla f(\mathbf{x}_1^*)$. This means that the value of $\nabla f(\mathbf{x}^*)$ is unique regardless of the uniqueness of $\mathbf{x}^*$.
\end{proof}

\begin{lemma}\label{lemma.fw_y_grad}
	Choose $\delta_k = \frac{2}{k+3}$ and let $M:= \max_{\mathbf{x} \in {\cal X}} f(\mathbf{x}) - f(\mathbf{x}^*)$, then we have
	\begin{align*}
		\| \nabla f(\mathbf{y}_k) - \nabla f(\mathbf{x}^*) \|  \leq \frac{C_1}{\sqrt{k+3}}.
	\end{align*}
	where $C_1 = \sqrt{6LM + 4L^2 D^2}$.
\end{lemma}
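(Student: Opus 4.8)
The plan is to reduce the gradient-difference bound to a function-value gap and then invoke the $\mathcal{O}(1/k)$ rate of Theorem \ref{thm.afw_general}. First I would apply Lemma \ref{lemma.nesterov} with $\mathbf{x} = \mathbf{x}^*$ and $\mathbf{y} = \mathbf{y}_k$, which gives $\frac{1}{2L}\|\nabla f(\mathbf{y}_k) - \nabla f(\mathbf{x}^*)\|^2 \leq f(\mathbf{y}_k) - f(\mathbf{x}^*) - \langle \nabla f(\mathbf{x}^*), \mathbf{y}_k - \mathbf{x}^*\rangle$. Since AFW keeps $\mathbf{y}_k \in {\cal X}$ for all $k$ (it is a convex combination of $\mathbf{x}_k,\mathbf{v}_k \in {\cal X}$), the first-order optimality condition $\langle \nabla f(\mathbf{x}^*), \mathbf{y}_k - \mathbf{x}^*\rangle \geq 0$ lets me discard the inner-product term, leaving the clean inequality $\|\nabla f(\mathbf{y}_k) - \nabla f(\mathbf{x}^*)\|^2 \leq 2L\big(f(\mathbf{y}_k) - f(\mathbf{x}^*)\big)$. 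Matching this against the target $C_1^2/(k+3) = (6LM + 4L^2D^2)/(k+3)$, it suffices to show $f(\mathbf{y}_k) - f(\mathbf{x}^*) \leq (3M + 2LD^2)/(k+3)$.

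Next I would control the gap at $\mathbf{y}_k$ through convexity of $f$ and the convex-combination structure $\mathbf{y}_k = (1-\delta_k)\mathbf{x}_k + \delta_k \mathbf{v}_k$: this yields $f(\mathbf{y}_k) - f(\mathbf{x}^*) \leq (1-\delta_k)\big(f(\mathbf{x}_k) - f(\mathbf{x}^*)\big) + \delta_k\big(f(\mathbf{v}_k) - f(\mathbf{x}^*)\big)$. The second term is immediately bounded by $\delta_k M$ using the definition of $M$ together with $\mathbf{v}_k \in {\cal X}$, while the first term is handled by Theorem \ref{thm.afw_general} together with $f(\mathbf{x}_0) - f(\mathbf{x}^*) \leq M$, namely $f(\mathbf{x}_k) - f(\mathbf{x}^*) \leq \frac{2M}{(k+1)(k+2)} + \frac{2LD^2}{k+2}$.

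The delicate part --- and the step I would be most careful with --- is the constant bookkeeping when substituting $\delta_k = \frac{2}{k+3}$, because a lazy simplification (bounding $\frac{2M}{(k+1)(k+2)}$ by $\frac{2M}{k+2}$ too early) produces $4M$ rather than the required $3M$. The trick is to keep the $(k+1)(k+2)$ denominator intact: with $1-\delta_k = \frac{k+1}{k+3}$ the error contribution becomes $\frac{2M}{(k+2)(k+3)} + \frac{k+1}{k+2}\cdot\frac{2LD^2}{k+3}$, and here I would bound $\frac{2M}{(k+2)(k+3)} \leq \frac{M}{k+3}$ (using $k+2 \geq 2$) and $\frac{k+1}{k+2} \leq 1$. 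Adding $\delta_k M = \frac{2M}{k+3}$ then collapses everything to exactly $\frac{3M + 2LD^2}{k+3}$.

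Finally I would substitute back: $\|\nabla f(\mathbf{y}_k) - \nabla f(\mathbf{x}^*)\|^2 \leq 2L \cdot \frac{3M+2LD^2}{k+3} = \frac{6LM + 4L^2D^2}{k+3} = \frac{C_1^2}{k+3}$, and take square roots to recover $\|\nabla f(\mathbf{y}_k) - \nabla f(\mathbf{x}^*)\| \leq \frac{C_1}{\sqrt{k+3}}$. No genuine obstacle is expected beyond the constant tracking; the whole argument is a short composition of the smoothness inequality (Lemma \ref{lemma.nesterov}), optimality of $\mathbf{x}^*$, convexity, and the already-established general-case rate.
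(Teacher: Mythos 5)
Your proposal is correct and follows essentially the same route as the paper's own proof: convexity at $\mathbf{y}_k = (1-\delta_k)\mathbf{x}_k + \delta_k \mathbf{v}_k$ combined with Theorem \ref{thm.afw_general} and the bound $f(\mathbf{v}_k) - f(\mathbf{x}^*) \leq M$ to get $f(\mathbf{y}_k) - f(\mathbf{x}^*) \leq \frac{3M+2LD^2}{k+3}$, then Lemma \ref{lemma.nesterov} with the optimality condition $\langle \nabla f(\mathbf{x}^*), \mathbf{y}_k - \mathbf{x}^*\rangle \geq 0$ to convert this into the gradient bound. Your constant bookkeeping (keeping the $(k+1)(k+2)$ denominator and using $k+2 \geq 2$ to bound $\frac{2M}{(k+2)(k+3)} \leq \frac{M}{k+3}$) matches the paper's exactly, merely with the smoothness step applied before rather than after the function-gap estimate.
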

\begin{proof}
	By convexity
	\begin{align*}
		f(\mathbf{y}_k) - f(\mathbf{x}^*) & \leq (1-\delta_k) \big[ f(\mathbf{x}_k) - f(\mathbf{x}^*)  \big] + \delta_k \big[ f(\mathbf{v}_k) - f(\mathbf{x}^*)  \big] \nonumber \\
		& \stackrel{(a)}{\leq} \frac{k+1}{k+3}\bigg[ \frac{ 2 \big( f(\mathbf{x}_0) - f(\mathbf{x}^*) \big) }{(k+1)(k+2)} + \frac{2 LD^2}{k+2} \bigg] + \frac{2 M}{k+3} \nonumber \\
		& \leq \frac{2M}{(k+2)(k+3)} + \frac{2LD^2}{k+3} + \frac{2M}{k+3} \nonumber \\
		& \leq \frac{3M + 2LD^2}{k+3}
	\end{align*}
	where (a) is by Theorem \ref{thm.afw_general}. Next using Lemma \ref{lemma.nesterov}, we have
	\begin{align*}
		\frac{1}{2L} \| \nabla f(\mathbf{y}_k) - \nabla f(\mathbf{x}^*) \|^2 & \leq f(\mathbf{y}_k) -  f(\mathbf{x}^*)	- \langle \nabla f(\mathbf{x}^*), \mathbf{y}_k - \mathbf{x}^* \rangle	 \stackrel{(b)}{\leq} f(\mathbf{y}_k) - f(\mathbf{x}^*) \\
		& \leq \frac{3M + 2LD^2}{k+3}
	\end{align*}
	where (b) is by the optimality condition, that is, $\langle \nabla f(\mathbf{x}^*), \mathbf{x} - \mathbf{x}^* \rangle \geq 0, \forall \mathbf{x} \in {\cal X}$. This further implies 
	\begin{align*}
		 \| \nabla f(\mathbf{y}_k) - \nabla f(\mathbf{x}^*) \|  \leq \sqrt{\frac{2L(3M + 2LD^2)}{k+3}}.
	\end{align*}
	The proof is thus completed.
\end{proof}

\begin{lemma}\label{lemma.theta_gradx*}
Choose $\delta_k = \frac{2}{k+3}$, it is guaranteed to have
	\begin{align*}
		\|  \bm{\theta}_{k+1} - \nabla f(\mathbf{x}^*) \| \leq \frac{4 C_1}{3(\sqrt{k+3}-1)}  + \frac{2 \sqrt{G}}{(k+2)(k+3)}.
	\end{align*}
	In addition, there exists a constant $C_2 \leq \frac{4}{3}C_1 + \frac{2}{3(\sqrt{3}+1)} \sqrt{G}$ such that 
	\begin{align*}
		\|  \bm{\theta}_{k+1} - \nabla f(\mathbf{x}^*) \| \leq \frac{C_2}{\sqrt{k+3}-1} .
	\end{align*}
\end{lemma}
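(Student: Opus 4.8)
The plan is to treat $\bm{\theta}_{k+1}$ as a weighted average of past gradients and compare it entrywise against $\nabla f(\mathbf{x}^*)$. First I would unroll the recursion $\bm{\theta}_{k+1} = (1-\delta_k)\bm{\theta}_k + \delta_k \nabla f(\mathbf{y}_k)$ from $\bm{\theta}_0 = \mathbf{0}$, exactly as done for \eqref{eq.afw_lb}, to obtain $\bm{\theta}_{k+1} = \sum_{\tau=0}^k w_\tau \nabla f(\mathbf{y}_\tau)$ with $w_\tau = \delta_\tau \prod_{j=\tau+1}^k(1-\delta_j)$. Substituting $\delta_j = \frac{2}{j+3}$, so that $1-\delta_j = \frac{j+1}{j+3}$, the product telescopes to $\prod_{j=\tau+1}^k(1-\delta_j) = \frac{(\tau+2)(\tau+3)}{(k+2)(k+3)}$, hence $w_\tau = \frac{2(\tau+2)}{(k+2)(k+3)}$. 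A short computation then gives $W := \sum_{\tau=0}^k w_\tau = 1 - \frac{2}{(k+2)(k+3)}$; the deficit $1-W$ is precisely the residual weight left on the zero initialization.

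Next I would split the error as $\bm{\theta}_{k+1} - \nabla f(\mathbf{x}^*) = \sum_{\tau=0}^k w_\tau\big(\nabla f(\mathbf{y}_\tau) - \nabla f(\mathbf{x}^*)\big) - (1-W)\nabla f(\mathbf{x}^*)$ and apply the triangle inequality. The second piece contributes $(1-W)\|\nabla f(\mathbf{x}^*)\| = \frac{2\sqrt{G}}{(k+2)(k+3)}$ (identifying $\|\nabla f(\mathbf{x}^*)\|$ with $\sqrt{G}$ through the active-constraint Assumption \ref{as.4}), which is exactly the residual term in the claim. For the first piece, Lemma \ref{lemma.fw_y_grad} bounds each summand by $w_\tau \frac{C_1}{\sqrt{\tau+3}}$, so the remaining task is to estimate $\frac{2C_1}{(k+2)(k+3)}\sum_{\tau=0}^k \frac{\tau+2}{\sqrt{\tau+3}}$.

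The crux is this last sum, and it must be estimated tightly enough to produce $\frac{1}{\sqrt{k+3}-1}$ rather than a looser $\frac{1}{\sqrt{k+3}}$. I would use $\frac{\tau+2}{\sqrt{\tau+3}} \le \sqrt{\tau+3}$ together with the integral comparison for increasing integrands, $\sum_{\tau=0}^k \sqrt{\tau+3} \le \int_0^{k+1}\sqrt{t+3}\,dt \le \frac{2}{3}(k+4)^{3/2} \le \frac{2}{3}\big[(k+3)^{3/2} + (k+3)\big]$, where the final step holds because $\frac{3}{2}\sqrt{k+4}\le k+3$ for all $k\ge 0$. The payoff is the algebraic identity $k+2 = (\sqrt{k+3}-1)(\sqrt{k+3}+1)$, which collapses $\frac{\sqrt{k+3}+1}{k+2}$ to $\frac{1}{\sqrt{k+3}-1}$ and turns the first piece into $\frac{4C_1}{3(\sqrt{k+3}-1)}$, establishing the first inequality.

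Finally, for the single-term form I would recycle the same factorization on the residual: $\frac{2\sqrt{G}}{(k+2)(k+3)} = \frac{1}{\sqrt{k+3}-1}\cdot\frac{2\sqrt{G}}{(\sqrt{k+3}+1)(k+3)}$, and since the trailing factor is decreasing in $k$, bound it by its value at $k=0$, namely $\frac{2\sqrt{G}}{3(\sqrt{3}+1)}$. Adding the two contributions yields $C_2 = \frac{4}{3}C_1 + \frac{2}{3(\sqrt{3}+1)}\sqrt{G}$, as stated. I expect the summation estimate to be the only genuine obstacle: the telescoping of the weights and the $k+2$ factorization are mechanical once spotted, but matching $\sum_{\tau}\frac{\tau+2}{\sqrt{\tau+3}}$ to the exact $\frac{1}{\sqrt{k+3}-1}$ rate hinges on the particular integral bound above.
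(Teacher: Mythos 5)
Your proposal is correct and mirrors the paper's proof essentially step for step: the same unrolling of $\bm{\theta}_{k+1}$ into weights $w_\tau = \frac{2(\tau+2)}{(k+2)(k+3)}$ with residual weight $\frac{2}{(k+2)(k+3)}$ on $\nabla f(\mathbf{x}^*)$, the same appeal to Lemma \ref{lemma.fw_y_grad} after the triangle inequality, and the same factorization $k+2=(\sqrt{k+3}-1)(\sqrt{k+3}+1)$ to extract both the $\frac{4C_1}{3(\sqrt{k+3}-1)}$ term and the value of $C_2$ via $k+3\geq 3$. The only cosmetic difference is in bounding the sum: the paper uses $\frac{\tau+2}{\sqrt{\tau+3}}\leq\sqrt{\tau+2}$ with $\sum_{\tau=0}^k\sqrt{\tau+2}\leq\frac{2}{3}(k+3)^{3/2}$, whereas you use $\sqrt{\tau+3}$ together with $(k+4)^{3/2}\leq(k+3)^{3/2}+(k+3)$ --- both land on identical constants.
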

\begin{proof}
	First we have
	\begin{align}\label{eq.theta_avg}
		\bm{\theta}_{k+1} & = ( 1 - \delta_k) 	\bm{\theta}_k + \delta_k \nabla f(\mathbf{y}_k)  = \sum_{\tau = 0}^k \delta_\tau \nabla f(\mathbf{y}_\tau) \bigg[ \prod_{j = \tau+1}^k (1 - \delta_j) \bigg] \\
		& = \sum_{\tau=0}^k \frac{2(\tau+2)}{(k+2)(k+3)} \nabla f(\mathbf{y}_\tau). \nonumber
	\end{align}
	Noticing that $2\sum_{\tau=0}^k (\tau+2) = (k+1)(k+4) = (k+2)(k+3) - 2$, we have
	\begin{align*}
		\|  \bm{\theta}_{k+1} - \nabla f(\mathbf{x}^*) \| & = \Big{\|} \sum_{\tau = 0}^k \frac{2(\tau+2)}{(k+2)(k+3)} \big[\nabla f(\mathbf{y}_\tau) - \nabla f(\mathbf{x}^*) \big] - \frac{2}{(k+2)(k+3)} \nabla f(\mathbf{x}^*) \Big{\|}  \\
		& \leq \sum_{\tau = 0}^k \frac{2(\tau+2)}{(k+2)(k+3)}  \big{\|} \nabla f(\mathbf{y}_\tau) - \nabla f(\mathbf{x}^*) \big{\|} + \frac{2}{(k+2)(k+3)} \big{\|}  \nabla f(\mathbf{x}^*) \big{\|}  \\
		& \stackrel{(a)}{\leq} \sum_{\tau = 0}^k \frac{2(\tau+2)}{(k+2)(k+3)} \frac{C_1}{\sqrt{\tau+3}} + \frac{2 \sqrt{G}}{(k+2)(k+3)}  \\
		& \leq \frac{2 C_1}{(k+2)(k+3)} \sum_{\tau = 0}^k \sqrt{\tau+2} + \frac{2 \sqrt{G}}{(k+2)(k+3)} \\
		& \leq \frac{4 C_1}{3(k+2)(k+3)}  (k+3)^{3/2}  + \frac{2 \sqrt{G}}{(k+2)(k+3)} \\
		& = \frac{4 C_1}{3(\sqrt{k+3}+1)(\sqrt{k+3}-1)}  \sqrt{k+3}  + \frac{2 \sqrt{G}}{(k+2)(k+3)} \\ 
		& \leq \frac{4 C_1}{3(\sqrt{k+3}-1)}  + \frac{2 \sqrt{G}}{(k+2)(k+3)} 
	\end{align*}
 	where (a) follows from Lemma \ref{lemma.fw_y_grad} and Assumption \ref{as.4}. 
 	
 	Then to find $C_2$, we have 
 	\begin{align*}
 		\|  \bm{\theta}_{k+1} - \nabla f(\mathbf{x}^*) \| & \leq \frac{4 C_1}{3(\sqrt{k+3}-1)}  + \frac{2 \sqrt{G}}{(k+2)(k+3)} \\
 		& = \frac{4 C_1}{3(\sqrt{k+3}-1)}  + \frac{2 \sqrt{G}}{(k+3)(\sqrt{k+3}+1)(\sqrt{k+3}-1) } \\
 		& \stackrel{(b)}{\leq} \frac{4 C_1}{3(\sqrt{k+3}-1)}  + \frac{2 \sqrt{G}}{3(\sqrt{3}+1)(\sqrt{k+3}-1) }
 	\end{align*}
 	where in (b) we use $k+3 \geq 3$ and  $\sqrt{k+3} +1 \geq \sqrt{3}+1$. The proof is thus completed.
\end{proof}

\begin{lemma}\label{lemma.last_lemma}
	There exists a constant $T \leq  \big(\frac{2C_2}{\sqrt{G}} + 1 \big)^2 - 3$, such that $\| \bm{\theta}_{k+1} \| \geq \frac{\sqrt{G}}{2}, \forall k \geq T$. In addition, it is guaranteed to have for any $k \geq T+1$
	\begin{align*}
		\| \mathbf{v}_{k+1} - \mathbf{v}_k \| \leq \frac{C_3}{ \sqrt{k+2} -1 }
	\end{align*}
	where $C_3\leq\frac{4R}{G} \big[ 4\sqrt{G} C_2 + \frac{2 C_2^2}{\sqrt{T+4} - 1} \big]$.
\end{lemma}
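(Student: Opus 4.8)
The plan is to split the claim into its two assertions and treat them in order, both resting on Lemma~\ref{lemma.theta_gradx*} (the bound $\|\bm{\theta}_{k+1} - \nabla f(\mathbf{x}^*)\| \leq \frac{C_2}{\sqrt{k+3}-1}$), the active-constraint Assumption~\ref{as.4}, and the closed-form FW step \eqref{eq.fw_opt_v}.

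For the first assertion, I would lower bound $\|\bm{\theta}_{k+1}\|$ by a reverse triangle inequality against $\nabla f(\mathbf{x}^*)$: since Assumption~\ref{as.4} gives $\|\nabla f(\mathbf{x}^*)\| \geq \sqrt{G}$ and Lemma~\ref{lemma.theta_gradx*} controls the deviation, we get $\|\bm{\theta}_{k+1}\| \geq \sqrt{G} - \frac{C_2}{\sqrt{k+3}-1}$. Requiring the right-hand side to be at least $\frac{\sqrt{G}}{2}$ is equivalent to $\sqrt{k+3}-1 \geq \frac{2C_2}{\sqrt{G}}$, i.e. $k \geq (\frac{2C_2}{\sqrt{G}}+1)^2 - 3$, which yields the stated threshold $T$ and the bound $\|\bm{\theta}_{k+1}\| \geq \frac{\sqrt{G}}{2}$ for all $k \geq T$.

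For the second assertion, I would use \eqref{eq.fw_opt_v} to write $\mathbf{v}_{k+1} - \mathbf{v}_k = -R\big(\frac{\bm{\theta}_{k+1}}{\|\bm{\theta}_{k+1}\|} - \frac{\bm{\theta}_k}{\|\bm{\theta}_k\|}\big)$, reducing the task to bounding the distance between two normalized vectors. A standard manipulation (adding and subtracting $\frac{\bm{\theta}_k}{\|\bm{\theta}_{k+1}\|}$ and invoking the reverse triangle inequality on $|\,\|\bm{\theta}_{k+1}\| - \|\bm{\theta}_k\|\,|$) gives $\|\mathbf{v}_{k+1} - \mathbf{v}_k\| \leq \frac{2R\|\bm{\theta}_{k+1} - \bm{\theta}_k\|}{\|\bm{\theta}_{k+1}\|}$. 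I would bound the numerator through $\nabla f(\mathbf{x}^*)$ as $\|\bm{\theta}_{k+1} - \bm{\theta}_k\| \leq \|\bm{\theta}_{k+1} - \nabla f(\mathbf{x}^*)\| + \|\bm{\theta}_k - \nabla f(\mathbf{x}^*)\| \leq \frac{2C_2}{\sqrt{k+2}-1}$, applying Lemma~\ref{lemma.theta_gradx*} at indices $k$ and $k-1$ and using $\sqrt{k+3}-1 \geq \sqrt{k+2}-1$. For the denominator, rather than simply inserting $\|\bm{\theta}_{k+1}\| \geq \frac{\sqrt{G}}{2}$, I would keep the sharper estimate $\frac{1}{\|\bm{\theta}_{k+1}\|} \leq \frac{1}{\sqrt{G} - \|\bm{\theta}_{k+1} - \nabla f(\mathbf{x}^*)\|} \leq \frac{1}{\sqrt{G}} + \frac{2}{G}\|\bm{\theta}_{k+1} - \nabla f(\mathbf{x}^*)\|$, which is valid for $k \geq T$ because the deviation is then at most $\frac{\sqrt{G}}{2}$. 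Multiplying the two estimates produces a linear term of order $\frac{1}{\sqrt{k+2}-1}$ (the $\frac{4R}{G}\cdot 4\sqrt{G}C_2$ piece) and a cross term carrying the product $\|\bm{\theta}_{k+1} - \nabla f(\mathbf{x}^*)\|\,\|\bm{\theta}_{k+1} - \bm{\theta}_k\|$, of order $\frac{C_2^2}{(\sqrt{k+3}-1)(\sqrt{k+2}-1)}$, which supplies the $C_2^2/G$ contribution to $C_3$.

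The only step needing care is freezing this cross term's $k$-dependence: for $k \geq T+1$ we have $k+3 \geq T+4$, so $\frac{1}{\sqrt{k+3}-1} \leq \frac{1}{\sqrt{T+4}-1}$, letting me pull out the constant $\frac{1}{\sqrt{T+4}-1}$ while retaining a single factor $\frac{1}{\sqrt{k+2}-1}$; restricting to $k \geq T+1$ (rather than $k \geq T$) also guarantees both $\|\bm{\theta}_{k+1}\|$ and $\|\bm{\theta}_k\|$ exceed $\frac{\sqrt{G}}{2}$. Collecting the two pieces reproduces the claimed $C_3 \leq \frac{4R}{G}\big[4\sqrt{G}C_2 + \frac{2C_2^2}{\sqrt{T+4}-1}\big]$, up to the slack inherent in these upper bounds. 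I expect the genuine obstacle to be conceptual rather than computational: the normalization in \eqref{eq.fw_opt_v} is Lipschitz in $\bm{\theta}$ only away from the origin, so the whole argument hinges on the uniform lower bound $\|\bm{\theta}_{k+1}\| \geq \frac{\sqrt{G}}{2}$ established in the first part. This is exactly where Assumption~\ref{as.4} is indispensable; without an active constraint $\|\bm{\theta}_{k+1}\|$ could vanish, the vertices $\mathbf{v}_k$ and $\mathbf{v}_{k+1}$ could jump to opposite points of the ball, and no small bound on $\|\mathbf{v}_{k+1} - \mathbf{v}_k\|$, hence no control of $\xi_k$ in Lemma~\ref{lemma.fw_fx_phi}, would be attainable.
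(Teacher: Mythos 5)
Your proof is correct, and its first half is the paper's argument in direct rather than contrapositive form: the paper supposes $\|\bm{\theta}_{\tilde{k}+1}\| < \frac{\sqrt{G}}{2}$ for some $\tilde{k}$, deduces $\frac{\sqrt{G}}{2} < \|\bm{\theta}_{\tilde{k}+1} - \nabla f(\mathbf{x}^*)\| \leq \frac{C_2}{\sqrt{\tilde{k}+3}-1}$ from Lemma \ref{lemma.theta_gradx*}, and concludes this forces $\tilde{k} < T$ --- exactly your reverse-triangle computation read backwards, yielding the same $T$. In the second half you take a genuinely different algebraic route: the paper writes $\|\mathbf{v}_{k+1}-\mathbf{v}_k\| = \frac{R}{\|\bm{\theta}_{k+1}\|\,\|\bm{\theta}_k\|}\bigl\| \|\bm{\theta}_k\|\bm{\theta}_{k+1} - \|\bm{\theta}_{k+1}\|\bm{\theta}_k \bigr\|$, substitutes $\bm{\theta}_k = \nabla f(\mathbf{x}^*)+\bm{\gamma}_k$ with $\|\bm{\gamma}_k\| \leq \frac{C_2}{\sqrt{k+2}-1}$, and expands the cross terms, which is how the exact two-term form $\frac{4R}{G}\bigl[4\sqrt{G}C_2 + \frac{2C_2^2}{\sqrt{T+4}-1}\bigr]$ arises; you instead use the normalized-vector Lipschitz estimate $\bigl\|\frac{\mathbf{a}}{\|\mathbf{a}\|} - \frac{\mathbf{b}}{\|\mathbf{b}\|}\bigr\| \leq \frac{2\|\mathbf{a}-\mathbf{b}\|}{\|\mathbf{a}\|}$ together with a reciprocal expansion. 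Both are valid, and your handling of the index shift and of why $k \geq T+1$ (not $T$) is needed matches the paper's reasoning. Two points of comparison are worth recording. First, your route is arguably cleaner on a subtle point: the paper's expansion bounds $\|\nabla f(\mathbf{x}^*)\|$ from \emph{above} by $\sqrt{G}$ in three places, which Assumption \ref{as.4} (a lower bound) does not strictly license unless $G$ is read as exactly $\|\nabla f(\mathbf{x}^*)\|^2$; your estimates invoke only the lower bound $\|\nabla f(\mathbf{x}^*)\| \geq \sqrt{G}$. Second, your ``sharper'' reciprocal estimate is unnecessary overhead for the stated claim: plugging $\|\bm{\theta}_{k+1}\| \geq \frac{\sqrt{G}}{2}$ directly into your estimate $\frac{2R\|\bm{\theta}_{k+1}-\bm{\theta}_k\|}{\|\bm{\theta}_{k+1}\|}$ already gives $\|\mathbf{v}_{k+1}-\mathbf{v}_k\| \leq \frac{8RC_2}{\sqrt{G}(\sqrt{k+2}-1)}$, and $\frac{8RC_2}{\sqrt{G}} \leq \frac{4R}{G}\cdot 4\sqrt{G}C_2$ sits comfortably inside the claimed bound on $C_3$ (which is only an upper bound, so any smaller valid constant suffices); the refined expansion merely reproduces the cosmetic shape of the paper's constant.
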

\begin{proof}
	Consider a specific $\tilde{k}$ with $\| \bm{\theta}_{\tilde{k}+1} \| < \frac{\sqrt{G}}{2}$ satisfied. In this case we have
	\begin{align*}
		\| \bm{\theta}_{\tilde{k}+1} - \nabla f(\mathbf{x}^*) \| \geq \| \nabla f(\mathbf{x}^*) \| - \| \bm{\theta}_{\tilde{k}+1} \| > \sqrt{G} - \frac{\sqrt{G}}{2} = \frac{\sqrt{G}}{2}.
	\end{align*}
	From Lemma \ref{lemma.theta_gradx*}, we have
	\begin{align*}
		\frac{\sqrt{G}}{2} < \|  \bm{\theta}_{\tilde{k}+1} - \nabla f(\mathbf{x}^*) \| \leq \frac{C_2}{\sqrt{\tilde{k}+3}-1} .
	\end{align*}
	From this inequality we can observe that $\|\bm{\theta}_{\tilde{k}+1}\|$ can be less than $\frac{\sqrt{G}}{2}$ only when $\tilde{k} < T = \big(\frac{2C_2}{\sqrt{G}} + 1 \big)^2 - 3$. Hence, the first part of this lemma is proved.

	For the upper bound of $\| \mathbf{v}_{k+1} - \mathbf{v}_k \|$, we only consider the case where $\bm{\theta}_{k+1} \neq \bm{0}$ since otherwise $\mathbf{v}_{k+1} = \mathbf{v}_k$ and the lemma holds automatically. For any $k \geq T+1$, from \eqref{eq.fw_opt_v}, one can rewrite 
	\begin{align}\label{eq.fnl}
		\| \mathbf{v}_{k+1} - \mathbf{v}_k \| & = 	R \Big{\|} \frac{\bm{\theta}_{k+1}}{ \|\bm{\theta}_{k+1} \| } -\frac{\bm{\theta}_k}{ \|\bm{\theta}_k \| }    \Big{\|} = \frac{R}{ \|\bm{\theta}_{k+1} \| \|\bm{\theta}_k \|} \Big{\|}   \| \bm{\theta}_k\| \bm{\theta}_{k+1} -  \| \bm{\theta}_{k+1}\| \bm{\theta}_k    \Big{\|} \nonumber \\
		& \stackrel{(a)}{\leq} \frac{4R}{G} \Big{\|}   \| \bm{\theta}_k\| \bm{\theta}_{k+1} -  \| \bm{\theta}_{k+1}\| \bm{\theta}_k    \Big{\|} 
	\end{align}
	where (a) is by $\bm{\theta}_k \geq \frac{\sqrt{G}}{2}$ for $k \geq T+1$. Next we rewrite $\bm{\theta}_k := \nabla f(\mathbf{x}^*) + \bm{\gamma}_k$. From Lemma \ref{lemma.theta_gradx*} we have $\| \bm{\gamma}_k \| = \|\bm{\theta}_k - \nabla f(\mathbf{x}^*) \| \leq \frac{C_2}{\sqrt{k+2} - 1}$. Using this relation, the RHS of \eqref{eq.fnl} becomes 
	\begin{align*}
		 & ~~~~~ \Big{\|}   \| \bm{\theta}_k\| \bm{\theta}_{k+1} -  \| \bm{\theta}_{k+1}\| \bm{\theta}_k \Big{\|} \nonumber \\
		 & =  \Big{\|}   \big{\|} \nabla f(\mathbf{x}^*) + \bm{\gamma}_k \big{\|} \big( \nabla f(\mathbf{x}^*) + \bm{\gamma}_{k+1} \big) - \big{\|} \nabla f(\mathbf{x}^*) + \bm{\gamma}_{k+1} \big{\|} \big( \nabla f(\mathbf{x}^*) + \bm{\gamma}_k \big) \Big{\|} \\
		& \leq  \|\nabla f(\mathbf{x}^*) \|  \Big{\|}   \big{\|} \nabla f(\mathbf{x}^*) + \bm{\gamma}_k \big{\|}  - \big{\|} \nabla f(\mathbf{x}^*) + \bm{\gamma}_{k+1} \big{\|}  \Big{\|} +  \Big{\|} \bm{\gamma}_{k+1}  \big{\|} \nabla f(\mathbf{x}^*) + \bm{\gamma}_k \big{\|} - \bm{\gamma}_k  \big{\|} \nabla f(\mathbf{x}^*) + \bm{\gamma}_{k+1} \big{\|} \Big{\|} \\
		& \leq \sqrt{G} \big( \| \bm{\gamma}_k \| +\| \bm{\gamma}_{k+1} \|   \big) + \| \bm{\gamma}_{k+1} \| \big( \sqrt{G} + \| \bm{\gamma}_k\| \big) +  \| \bm{\gamma}_k \| \big( \sqrt{G} + \| \bm{\gamma}_{k+1}\| \big)  \\
		& \leq \frac{4\sqrt{G} C_2}{\sqrt{k+2} - 1} + \frac{2 C_2^2}{(\sqrt{k+2} - 1)(\sqrt{k+3} - 1)} \leq \frac{4\sqrt{G} C_2}{\sqrt{k+2} - 1} + \frac{2 C_2^2}{(\sqrt{k+2} - 1)(\sqrt{T+4} - 1)}.
	\end{align*}
	Plugging back to \eqref{eq.fnl}, the proof can be completed.
\end{proof}

\textbf{Proof of Theorem \ref{thm.afw_acc}.}

\begin{proof}
	We first consider the constraint set being an $\ell_2$ norm ball. From Lemma \ref{lemma.fw_fx_phi}, we can write
	\begin{align*}
		\xi_{k+1} & = (1-\delta_k) \xi_k + \frac{L \delta_k^2}{2} \| \mathbf{v}_{k+1} - \mathbf{v}_k \|^2 = \frac{L}{2} \sum_{\tau=0}^k \delta_\tau^2  \| \mathbf{v}_{\tau+1} - \mathbf{v}_\tau \|^2 \bigg[  \prod_{j=\tau+1}^k (1-\delta_\tau) \bigg] \nonumber \\
		& \stackrel{(a)}{=} \frac{L}{2} \sum_{\tau=0}^{T} \delta_\tau^2  \| \mathbf{v}_{\tau+1} - \mathbf{v}_\tau \|^2 \bigg[  \prod_{j=\tau+1}^k (1-\delta_\tau) \bigg] + \sum_{\tau=T+1}^{k}  \delta_\tau^2  \| \mathbf{v}_{\tau+1} - \mathbf{v}_\tau \|^2 \bigg[  \prod_{j=\tau+1}^k (1-\delta_\tau) \bigg]  \nonumber \\
		& \stackrel{(b)}{\leq}\frac{L}{2} \sum_{\tau=0}^{T} \delta_\tau^2 D^2 \bigg[  \prod_{j=\tau+1}^k (1-\delta_\tau) \bigg] + \sum_{\tau=T+1}^{k}  \delta_\tau^2  \frac{C_3^2}{ (\sqrt{\tau+2} -1)^2 } \bigg[  \prod_{j=\tau+1}^k (1-\delta_\tau) \bigg] \nonumber \\
		& = \frac{L}{2} \sum_{\tau=0}^{T} \frac{4D^2}{(\tau+3)^2} \frac{(\tau+2)(\tau+3)}{(k+2)(k+3)} + \sum_{\tau=T+1}^{k} \frac{4}{(\tau+3)^2} \frac{C_3^2}{ (\sqrt{\tau+2} -1)^2 } \frac{(\tau+2)(\tau+3)}{(k+2)(k+3)} \nonumber \\
		& \leq \frac{2LD^2 (T+1)}{ (k+2)(k+3)} + \frac{4C_3^2}{(k+2)(k+3)} 	\sum_{\tau=T+1}^{k}\frac{1}{ (\sqrt{\tau+2} -1)^2 } \nonumber \\
		& = {\cal O}\Bigg(   \frac{LD^2 (T+1) + C_3^2  \ln k }{ (k+2)(k+3)}   \Bigg)
	\end{align*}
	where in (a) $T$ is defined in Lemma \ref{lemma.last_lemma}; (b) is by Lemma \ref{lemma.last_lemma} and Assumption \ref{as.4}; and in the last equation constants are hide in the big $\cal O$ notation.
	
	Finally, applying Lemma \ref{lemma.est_seq_conv}, we have
	\begin{align}
		f(\mathbf{x}_k) - f(\mathbf{x}^*) \leq \frac{2 \big[ f(\mathbf{x}_0) - f(\mathbf{x}^*)\big]}{(k+1)(k+2)} + \xi_k.
	\end{align}
	Plugging $\xi_k$ in the proof is completed.
	
	When the constraint set is an $\ell_1$ norm ball, the basic proof idea is similar as the $\ell_2$ norm ball case, i.e., after $T$ iterations $\mathbf{v}_k$ and $\mathbf{v}_{k+1}$ are near to each other. The only difference is that a regularization condition should be satisfied to ensure the uniqueness of $\mathbf{v}_k$ (only for proof, not necessary for implementation). There are multiple kinds of regularization schemes, for example, $[\nabla f(\mathbf{x}^*)]_i - [\nabla f(\mathbf{x}^*)]_j = c >0$, where $i,j$ are the largest and second largest entry of $\nabla f(\mathbf{x}^*)$, respectively. In this case, we only need to modify the $T$ in Lemma \ref{lemma.last_lemma} as a $c$ dependent constant, and all the other proofs follow.
\end{proof}

\section{AFW for other constraint sets}\label{apdx.other_constraint}
\subsection{$\ell_1$ norm ball}\label{apdx.ell1}
In this subsection we focus on the convergence of AFW for $\ell_1$ norm ball constraint under the assumption that $\argmax_j \big{|}[ \nabla f(\mathbf{x}^*) ]_j \big{|}$ has cardinality $1$ (which naturally implies that the constraint is active). Note that in this case Lemma \ref{lemma.equal} still holds hence the value of $ \nabla f(\mathbf{x}^*)$ is unique regardless the uniqueness of $\mathbf{x}^*$. This assumption directly leads to $\argmax_j \big{|}[ \nabla f(\mathbf{x}^*) ]_j \big{|} - | [ \nabla f(\mathbf{x}^*) ]_i| \geq \lambda, \forall i$. 

When ${\cal X} = \{ \mathbf{x} | \| \mathbf{x} \|_1 \leq R \}$, the FW steps for AFW can be solved in closed-form. We have $\mathbf{v}_{k+1} = [0,\ldots, 0, -{\rm sgn} [\bm{\theta}_{k+1}]_i R,0,\ldots, 0]^\top$, i.e., only the $i$-th entry being nonzero with $i = \argmax_j | [\bm{\theta}_{k+1}]_j | $.

%The constants required in the proof is summarized below for clearance. The norm considered in this subsection for defining $L$ and $D$ is $\|\cdot \|_1$, that is, $\| \nabla f(\mathbf{x}) - \nabla f(\mathbf{y}) \|_\infty \leq L \| \mathbf{x} - \mathbf{y} \|_1$, and $\| \mathbf{x} - \mathbf{y} \|_1 \leq D, \forall \mathbf{x},\forall \mathbf{y} \in \tilde{\cal X}$. Using equivalences of norms, we also assume $\| \nabla f(\mathbf{x}) - \nabla f(\mathbf{y}) \|_2\leq L_2 \| \mathbf{x} - \mathbf{y} \|_2 ,\forall \mathbf{x},\mathbf{y} \in \tilde{\cal X}$ and $\| \mathbf{x} - \mathbf{y} \|_2 \leq D_2, \forall \mathbf{x},\forall \mathbf{y} \in {\cal X}$.

\begin{lemma}\label{lemma.122}
	There exist a constant $T$ (which is irreverent with $k$), whenever $k \geq T$, it is guaranteed to have 
	\begin{align*}
			\|  \mathbf{v}_{k+1} - \mathbf{v}_{k+2} \| = 0
	\end{align*}
\end{lemma}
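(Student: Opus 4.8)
The plan is to show that the closed-form FW vertex $\mathbf{v}_{k+1}$ freezes to a single fixed vector once $k$ is large enough, so that $\mathbf{v}_{k+1}$ and $\mathbf{v}_{k+2}$ coincide \emph{exactly}. The engine of the argument is that $\bm{\theta}_{k+1}$ converges to $\nabla f(\mathbf{x}^*)$, whose maximal-magnitude coordinate is unique and separated from every other coordinate by the gap $\lambda>0$; once $\bm{\theta}_{k+1}$ is close enough to $\nabla f(\mathbf{x}^*)$, its dominant coordinate (index and sign) must agree with that of $\nabla f(\mathbf{x}^*)$.

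First I would argue that the estimate of Lemma \ref{lemma.theta_gradx*}, namely $\|\bm{\theta}_{k+1} - \nabla f(\mathbf{x}^*)\| \leq \frac{C_2}{\sqrt{k+3}-1}$, still holds here. Its proof invokes only the general rate of Theorem \ref{thm.afw_general}, valid on any convex compact $\mathcal{X}$, together with the activeness of the constraint through $\|\nabla f(\mathbf{x}^*)\|^2 \geq G > 0$; taking $G = \|\nabla f(\mathbf{x}^*)\|^2$, which is positive because the unique-argmax hypothesis forces $\nabla f(\mathbf{x}^*) \neq \mathbf{0}$, none of the intermediate steps uses the $\ell_2$-ball closed form. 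Hence the bound transfers verbatim to the $\ell_1$ setting.

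Next, let $i^\star := \argmax_j |[\nabla f(\mathbf{x}^*)]_j|$, unique by assumption, with induced gap $|[\nabla f(\mathbf{x}^*)]_{i^\star}| - |[\nabla f(\mathbf{x}^*)]_i| \geq \lambda$ for all $i \neq i^\star$. Writing $\epsilon_k := \frac{C_2}{\sqrt{k+3}-1}$ and using $\|\cdot\|_\infty \leq \|\cdot\|_2$, each coordinate satisfies $|[\bm{\theta}_{k+1}]_j - [\nabla f(\mathbf{x}^*)]_j| \leq \epsilon_k$. A triangle-inequality sandwich then yields, for every $i \neq i^\star$, the bound $|[\bm{\theta}_{k+1}]_{i^\star}| - |[\bm{\theta}_{k+1}]_i| \geq \lambda - 2\epsilon_k$, while the sign obeys $\mathrm{sgn}[\bm{\theta}_{k+1}]_{i^\star} = \mathrm{sgn}[\nabla f(\mathbf{x}^*)]_{i^\star}$ as soon as $\epsilon_k < |[\nabla f(\mathbf{x}^*)]_{i^\star}|$, which is implied by $\epsilon_k < \lambda \leq |[\nabla f(\mathbf{x}^*)]_{i^\star}|$. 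Thus imposing $\epsilon_k < \lambda/2$, i.e. $k > (1 + 2C_2/\lambda)^2 - 3 =: T$, forces $\argmax_j |[\bm{\theta}_{k+1}]_j| = i^\star$ uniquely and with a fixed sign for every index exceeding $T$.

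Finally, I would substitute this into the closed form $\mathbf{v}_{k+1} = -\mathrm{sgn}[\bm{\theta}_{k+1}]_{i^\star} R\, \mathbf{e}_{i^\star}$, where $\mathbf{e}_{i^\star}$ is the $i^\star$-th standard basis vector: for all $k \geq T$ the vertex equals the single $k$-independent vector $-\mathrm{sgn}[\nabla f(\mathbf{x}^*)]_{i^\star} R\, \mathbf{e}_{i^\star}$, so in particular $\mathbf{v}_{k+1} = \mathbf{v}_{k+2}$ and $\|\mathbf{v}_{k+1} - \mathbf{v}_{k+2}\| = 0$. The main obstacle is the two bookkeeping points just flagged: verifying that Lemma \ref{lemma.theta_gradx*} is genuinely constraint-agnostic rather than tacitly using $\ell_2$ geometry, and confirming that both the \emph{index} and the \emph{sign} of the dominant coordinate freeze simultaneously, so that the vertex is literally constant and not merely Cauchy.
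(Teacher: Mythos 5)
Your proof is correct and takes essentially the same route as the paper's: it invokes Lemma \ref{lemma.theta_gradx*} (which is indeed constraint-agnostic, resting only on Theorem \ref{thm.afw_general} and Assumption \ref{as.4}, with $G=\|\nabla f(\mathbf{x}^*)\|^2>0$ forced by the unique-argmax hypothesis), uses the gap $\lambda$ to freeze $\argmax_j |[\bm{\theta}_{k+1}]_j|$ at $i^\star$ for all $k\geq T$ with $T={\cal O}\big((C_2/\lambda)^2\big)$, and concludes via the closed-form $\ell_1$ FW step — the paper requires $\epsilon_k\leq\lambda/3$ where you require $\epsilon_k<\lambda/2$, an immaterial difference in constants. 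Your explicit check that the \emph{sign} of $[\bm{\theta}_{k+1}]_{i^\star}$ freezes simultaneously (so the vertex is literally constant, not just supported on a fixed coordinate) is a detail the paper's proof leaves implicit, and it is needed to conclude $\mathbf{v}_{k+1}=\mathbf{v}_{k+2}$ exactly.
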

\begin{proof}
	In the proof, we denote $i = \argmax_{j} | [\nabla f(\mathbf{x}^*)]_j|$ for convenience. It can be seen that Lemma \ref{lemma.theta_gradx*} still holds. 
	
	We show that there exist $T = (\frac{3C_2}{\lambda}+1)^2 - 3$, such that for all $k \geq T$, we have $\argmax_j | [\bm{\theta}_{k+1}]_j | = i$, which further implies only the $i$-th entry of $\mathbf{v}_{k+1}$ is non-zero. Since Lemma \ref{lemma.theta_gradx*} holds, one can see whenever $k \geq T$, it is guaranteed to have $\| \bm{\theta}_{k+1} - \nabla f(\mathbf{x}^*) \| \leq \frac{\lambda}{3}$. Therefore, one must have $\big| | [\bm{\theta}_{k+1}]_j | - | [\nabla f(\mathbf{x}^*)]_j | \big| \leq \frac{\lambda}{3}, \forall j$. Then it is easy to see that $ | [\bm{\theta}_{k+1}]_i | - | [\bm{\theta}_{k+1}]_j |  \geq \frac{\lambda}{3}, \forall j$. Hence, we have $\argmax_j | [\bm{\theta}_{k+1}]_j | = i$. 
	
	Then one can use the closed form solution of FW step to see that when $k \geq T$, we have $\mathbf{v}_{k+1} -\mathbf{v}_{k+2}= \mathbf{0}$. The proof is thus completed.
\end{proof}

\begin{lemma}\label{lemma.sssss}
Let $\xi_0 = 0$ and $T$ defined the same as in Lemma \ref{lemma.122}. Denote $\Phi_k^* := \Phi_k(\mathbf{v}_k)$ as the minimum value of $\Phi_k(\mathbf{x})$ over ${\cal X}$, then we have 
	\begin{align*}
		f(\mathbf{x}_k )\leq \Phi_k(\mathbf{v}_k) = 	\Phi_k^* + \xi_k, \forall k\geq 0
	\end{align*}
	where for $k < T+1$, $\xi_{k+1} = (1-\delta_k)\xi_k + \frac{LD^2}{2} \delta_k^2$, and $\xi_{k+1} = (1 - \delta_k)  \xi_k$ for $k \geq T+1$.
\end{lemma}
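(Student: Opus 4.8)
The plan is to run the same induction that establishes Lemma~\ref{lemma.fw_fx_phi}, and to swap in the two $\ell_1$-specific bounds on $\|\mathbf{v}_{k+1}-\mathbf{v}_k\|^2$ only at the very last step. First I would check the base case: since $\Phi_0(\mathbf{x})\equiv f(\mathbf{x}_0)$ and $\mathbf{v}_0=\mathbf{x}_0$, we have $\Phi_0^*=\Phi_0(\mathbf{v}_0)=f(\mathbf{x}_0)$, so with $\xi_0=0$ the claim $f(\mathbf{x}_0)\le\Phi_0^*+\xi_0$ holds with equality.

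For the inductive step I would assume $f(\mathbf{x}_k)\le\Phi_k^*+\xi_k$ and reproduce the chain (a)--(f) from the proof of Lemma~\ref{lemma.fw_fx_phi} verbatim. The point to stress is that none of those steps uses the geometry of $\mathcal{X}$: step (a) only needs the inductive hypothesis $\Phi_k^*\ge f(\mathbf{x}_k)-\xi_k$; step (b) uses $\langle\bm{\theta}_k,\mathbf{v}_{k+1}-\mathbf{v}_k\rangle\ge0$, which still holds because $\mathbf{v}_k$ minimizes $\langle\bm{\theta}_k,\cdot\rangle$ over an arbitrary compact $\mathcal{X}$; and steps (c)--(f) are convexity and smoothness together with the definitions $\mathbf{x}_{k+1}=(1-\delta_k)\mathbf{x}_k+\delta_k\mathbf{v}_{k+1}$ and $\mathbf{y}_k=(1-\delta_k)\mathbf{x}_k+\delta_k\mathbf{v}_k$ (the latter giving $\|\mathbf{x}_{k+1}-\mathbf{y}_k\|^2=\delta_k^2\|\mathbf{v}_{k+1}-\mathbf{v}_k\|^2$). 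This yields
\[
f(\mathbf{x}_{k+1})\le\Phi_{k+1}^*+(1-\delta_k)\xi_k+\tfrac{L\delta_k^2}{2}\|\mathbf{v}_{k+1}-\mathbf{v}_k\|^2 .
\]

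It then remains to bound the residual term in two regimes, which is exactly what separates this lemma from the general one. For $k<T+1$ I would invoke Assumption~\ref{as.3} to write $\|\mathbf{v}_{k+1}-\mathbf{v}_k\|^2\le D^2$, recovering $\xi_{k+1}=(1-\delta_k)\xi_k+\tfrac{LD^2}{2}\delta_k^2$. For $k\ge T+1$ I would appeal to Lemma~\ref{lemma.122}: setting $m=k-1\ge T$ there gives $\|\mathbf{v}_k-\mathbf{v}_{k+1}\|=0$, so the residual vanishes and $\xi_{k+1}=(1-\delta_k)\xi_k$. Substituting the applicable case into the displayed inequality closes the induction, with $\Phi_k^*=\Phi_k(\mathbf{v}_k)$ holding by definition of $\mathbf{v}_k$.

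I expect the main (though minor) obstacle to be the index bookkeeping around $T$: Lemma~\ref{lemma.122} is phrased as $\|\mathbf{v}_{k+1}-\mathbf{v}_{k+2}\|=0$ for $k\ge T$, so one must verify that the shift $m=k-1$ makes $\|\mathbf{v}_{k+1}-\mathbf{v}_k\|=0$ hold precisely for the indices $k\ge T+1$ feeding the recursion, matching the case split in the statement. A secondary point worth a sentence is that the $D^2$ bound used for $k\le T$ needs no uniqueness of $\mathbf{v}_{k+1}$ (it holds for any feasible point), whereas the vanishing of the residual for large $k$ does rely on the singleton, closed-form FW solution guaranteed by Lemma~\ref{lemma.122}; beyond these two inputs no $\ell_2$ structure or strong convexity is invoked.
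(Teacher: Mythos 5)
Your proposal is correct and follows essentially the same route as the paper, which likewise reruns the induction of Lemma~\ref{lemma.fw_fx_phi}, bounds $\|\mathbf{v}_{k+1}-\mathbf{v}_k\|^2\leq D^2$ for $k<T+1$, and invokes Lemma~\ref{lemma.122} to kill the residual for $k\geq T+1$. Your care with the index shift ($m=k-1\geq T$ giving $\mathbf{v}_k=\mathbf{v}_{k+1}$ exactly for $k\geq T+1$) is a detail the paper glosses over, and it confirms the case split in the statement.
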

\begin{proof}
	The proof for $k < T+1$ is similar as that in Lemma \ref{lemma.fw_fx_phi}, hence it is omitted here. For $k \geq T+1$, using similar argument as in Lemma \ref{lemma.fw_fx_phi}, we have
	\begin{align*}
		\Phi_{k+1}^* 
		& \geq f(\mathbf{x}_{k+1}) + \frac{L \delta_k^2 }{2} \| \mathbf{v}_{k+1} - \mathbf{v}_k \|^2 - (1-\delta_k) \xi_k  \\
		&  = f(\mathbf{x}_{k+1}) - (1 - \delta_k)  \xi_k 
	\end{align*}
	where the last equation is because of Lemma \ref{lemma.122}.
\end{proof}

\begin{theorem}\label{thm.l1}
	Consider ${\cal X}$ is an $\ell_1$ norm ball. If $\argmax_j \big{|}[ \nabla f(\mathbf{x}^*) ]_j \big{|}$ has cardinality $1$, and Assumptions \ref{as.1} - \ref{as.3} are satisfied, AFW guarantees that
	\begin{align*}
		f(\mathbf{x}_k) - f(\mathbf{x}^*) = {\cal O}\Big(\frac{1}{k^2}\Big).
	\end{align*}
\end{theorem}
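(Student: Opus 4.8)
The plan is to combine the estimate-sequence machinery of Lemma \ref{lemma.est_seq_conv} with the two-phase bound on the error term $\xi_k$ supplied by Lemma \ref{lemma.sssss}, whose decisive ingredient---that the $\ell_1$ FW vertex freezes after finitely many steps---is already furnished by Lemma \ref{lemma.122}. Since $\big(\{\Phi_k\},\{\lambda_k\}\big)$ is an estimate sequence by Lemma \ref{lemma.es_fw} and Lemma \ref{lemma.sssss} gives $f(\mathbf{x}_k)\leq \Phi_k^*+\xi_k$, I would invoke Lemma \ref{lemma.est_seq_conv} directly to obtain
\begin{align*}
f(\mathbf{x}_k)-f(\mathbf{x}^*)\leq \lambda_k\big(f(\mathbf{x}_0)-f(\mathbf{x}^*)\big)+\xi_k,
\end{align*}
where with $\delta_k=\frac{2}{k+3}$ the weight unrolls to $\lambda_k=\prod_{j=0}^{k-1}(1-\delta_j)=\frac{2}{(k+1)(k+2)}={\cal O}(1/k^2)$. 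The first term is thus already at the target rate, and the whole argument reduces to proving $\xi_k={\cal O}(1/k^2)$.

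The crux is the bound on $\xi_k$, which I would split at the constant $T$ from Lemma \ref{lemma.122}. For $k\leq T$ the recursion in Lemma \ref{lemma.sssss} coincides with the general-case one, so unrolling it yields
\begin{align*}
\xi_{T+1}=\frac{L}{2}\sum_{\tau=0}^{T}\delta_\tau^2\,\|\mathbf{v}_{\tau+1}-\mathbf{v}_\tau\|^2\prod_{j=\tau+1}^{T}(1-\delta_j)\leq \frac{LD^2}{2}\sum_{\tau=0}^{T}\delta_\tau^2\prod_{j=\tau+1}^{T}(1-\delta_j),
\end{align*}
a finite sum that is independent of $k$ and hence a constant. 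For $k\geq T+1$ the recursion becomes purely contractive, $\xi_{k+1}=(1-\delta_k)\xi_k$, so $\xi_k=\xi_{T+1}\prod_{j=T+1}^{k-1}(1-\delta_j)$. Writing $1-\delta_j=\frac{j+1}{j+3}$, the product telescopes to $\frac{(T+2)(T+3)}{(k+1)(k+2)}$, whence $\xi_k={\cal O}(1/k^2)$ with the $k$-independent factor $\xi_{T+1}(T+2)(T+3)$ absorbed into the constant.

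Plugging both pieces into the estimate-sequence bound gives
\begin{align*}
f(\mathbf{x}_k)-f(\mathbf{x}^*)\leq \frac{2\big(f(\mathbf{x}_0)-f(\mathbf{x}^*)\big)}{(k+1)(k+2)}+\frac{{\cal O}(1)}{(k+1)(k+2)}={\cal O}\Big(\frac{1}{k^2}\Big),
\end{align*}
which completes the proof. I expect the genuinely hard part of the development to lie \emph{not} in this final assembly but in Lemma \ref{lemma.122}: one must show that the cardinality-one condition on $\argmax_j |[\nabla f(\mathbf{x}^*)]_j|$, together with the decay $\|\bm{\theta}_{k+1}-\nabla f(\mathbf{x}^*)\|\to 0$ from Lemma \ref{lemma.theta_gradx*}, forces $\argmax_j |[\bm{\theta}_{k+1}]_j|$ to agree with it for all large $k$, so the closed-form $\ell_1$ vertex stops moving and the troublesome $\delta_k^2\|\mathbf{v}_{k+1}-\mathbf{v}_k\|^2$ terms vanish. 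Granting that lemma, the only points requiring care here are confirming that $\xi_{T+1}$ is truly $k$-independent and that the telescoping product is evaluated correctly.
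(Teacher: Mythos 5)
Your proposal is correct and follows essentially the same route as the paper: both invoke Lemma \ref{lemma.122} to freeze the $\ell_1$ vertex after $T$ steps, feed the two-phase recursion of Lemma \ref{lemma.sssss} into the estimate-sequence bound of Lemma \ref{lemma.est_seq_conv}, and conclude $\xi_k = {\cal O}(T/k^2)$ — the paper unrolls the recursion in one sum with a piecewise forcing term and obtains $\xi_k \leq \frac{2LD^2(T+1)}{(k+2)(k+3)}$, while you split at $T+1$ and telescope the contractive tail, which is a presentational variant of the identical computation. Your telescoping $\prod_{j=T+1}^{k-1}(1-\delta_j) = \frac{(T+2)(T+3)}{(k+1)(k+2)}$ and the weight $\lambda_k = \frac{2}{(k+1)(k+2)}$ are both evaluated correctly, and you rightly locate the substantive content in Lemma \ref{lemma.122} rather than in this assembly.
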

\begin{proof}
Let $T$ be defined the same as in Lemma \ref{lemma.122}. For convenience denote $\xi_{k+1} = (1-\delta_k)\xi_k + \zeta_k$. When $k < T+1$, we have $\zeta_k = \frac{LD^2}{2} \delta_k^2$; when $k \geq T+1$, we have $\zeta_k = 0$. Then we can write 
	\begin{align*}
		\xi_{k+1} & = (1-\delta_k) \xi_k +  \theta_k = \sum_{\tau=0}^k \theta_\tau \prod_{j=\tau+1}^k (1-\delta_j)  =  \sum_{\tau=0}^k \theta_\tau \frac{(\tau+2)(\tau+3)}{(k+2)(k+3)}  \nonumber \\
		& = \sum_{\tau=0}^{T} \frac{LD^2}{2} \delta_{\tau}^2 \frac{(\tau+2)(\tau+3)}{(k+2)(k+3)}   = \frac{2LD^2 (T+1)}{(k+2)(k+3)} .
	\end{align*}

	Finally, applying Lemma \ref{lemma.est_seq_conv}, we have
	\begin{align*}
		f(\mathbf{x}_k) - f(\mathbf{x}^*) \leq \frac{2 \big[ f(\mathbf{x}_0) - f(\mathbf{x}^*)\big]}{(k+1)(k+2)} + \xi_k.
	\end{align*}
	Plugging $\xi_k$ in completes the proof.
\end{proof}

\subsection{$\ell_p$ norm ball}\label{apdx.ellp}
In this subsection we focus on AFW with an active $\ell_p$ norm ball constraint ${\cal X}:= \{ \mathbf{x}| \| \mathbf{x} \|_p \leq R\}$, where $p \in (1, +\infty)$ and $p\neq 2$. We show that if the magnitude of every entry in $\nabla f(\mathbf{x}^*)$ is bounded away from $0$, i.e., $ | [\nabla f(\mathbf{x}^*)]_i  | = \lambda > 0, \forall i$, then AFW converges at ${\cal O}(\frac{1}{k^2})$.

In such cases, the FW step in AFW can be solved in closed-form, that is, the $i$-th entry of $\mathbf{v}_{k+1}$ can be obtained via
\begin{align}\label{eq.lp_solution}
	[\mathbf{v}_{k+1}]_i & = - {\rm sgn} \big([\bm{\theta}_{k+1}]_i \big) \frac{ \big|  [\bm{\theta}_{k+1}]_i \big|^{q-1} }{ \| \bm{\theta}_{k+1} \|_q^{q-1}}  \cdot R = - [\bm{\theta}_{k+1}]_i \frac{ \big|  [\bm{\theta}_{k+1}]_i \big|^{q-2} }{ \| \bm{\theta}_{k+1} \|_q^{q-1}}  \cdot R
\end{align}
where $1/p + 1/q = 1$. For simplicity we will emphasis on the $k$ dependence only and use ${\cal O}$ notation in this subsection. We will also use $\theta_k^i$ to replace $[\bm{\theta}_k]_i$ for notational simplicity. In other words, $\theta_k^i$ denotes the $i$-th entry of $\bm{\theta}_k$.

First according to Lemma \ref{lemma.theta_gradx*}, and use the equivalence of norms, we have $\| \bm{\theta}_k - \nabla f(\mathbf{x}^*) \|_q = {\cal O}(\frac{1}{\sqrt{k}})$. Hence, there must exist $T_1$, such that $\| \bm{\theta}_k \|_q \leq 2G, \forall k \geq T_1$. Next using similar arguments as the first part of Lemma \ref{lemma.last_lemma}, there must exist $T_2$, such that $\| \bm{\theta}_k \|_q \geq G/2, \forall k \geq T_2$. In addition, using again similar arguments as the first part of Lemma \ref{lemma.last_lemma}, we can find that there exist $T_3$, such that $| \theta_k^i| > \frac{\lambda}{2}, \forall k\geq T_3$.

Let $T:= \max \{T_1, T_2 ,T_3\}$. Next we will show that $\|\mathbf{v}_{k+1} - \mathbf{v}_k\|^2 = {\cal O}(  \frac{1}{k} ), \forall k \geq T $. To start, using \eqref{eq.lp_solution}, one can have
\begin{align*}
	& ~~~~~ v_{k+1}^i - v_k^i  =  \frac{R}{ \| \bm{\theta}_{k+1} \|_q^{q-1} \| \bm{\theta}_k \|_q^{q-1}} \bigg[  -\theta_{k+1}^i |\theta_{k+1}^i|^{q-2} \| \bm{\theta}_k \|_q^{q-1} + \theta_k^i |\theta_k^i|^{q-2} \| \bm{\theta}_{k+1} \|_q^{q-1} \bigg] \nonumber \\
	& =  \frac{R}{ \| \bm{\theta}_{k+1} \|_q^{q-1} \| \bm{\theta}_k \|_q^{q-1}} \bigg[  \theta_{k+1}^i |\theta_{k+1}^i|^{q-2}\Big( \| \bm{\theta}_{k+1} \|_q^{q-1} - \| \bm{\theta}_k \|_q^{q-1} \Big) + \| \bm{\theta}_{k+1} \|_q^{q-1} \Big(  \theta_k^i |\theta_k^i|^{q-2} -  \theta_{k+1}^i |\theta_{k+1}^i|^{q-2}\Big) \bigg].
\end{align*}
Next using $G/2 \leq \| \bm{\theta}_{k+1} \|_q \leq 2G, \forall k \geq T$, and $| \theta_{k+1}^i| \leq   \| \bm{\theta}_{k+1} \|_q $, we have
\begin{align}\label{eq.abs_vk-vk+1}
	|  v_{k+1}^i - v_k^i | = {\cal O} \Bigg( \Big| \| \bm{\theta}_{k+1} \|_q^{q-1} - \| \bm{\theta}_k \|_q^{q-1} \Big| +   \Big|  \theta_k^i |\theta_k^i|^{q-2} -  \theta_{k+1}^i |\theta_{k+1}^i|^{q-2}\Big| \Bigg).
\end{align}
 
We first bound the first term in RHS of \eqref{eq.abs_vk-vk+1}. Let $h(x) = (x)^{q-1}$. Then by mean value theorem we have $h(y) = h(x)+ \nabla h(x) (y-x) + \nabla^2 h(z) \| x- y \|^2$, where $z = (1-\alpha)x + \alpha y$ for some $\alpha \in [0,1]$. Taking $x = \| \bm{\theta}_k \|_q$ and $y = \| \bm{\theta}_{k+1} \|_q$, and using the fact $G/2\leq \| \bm{\theta}_k \|_q \leq 2G$ for $k\geq T$, we have	
\begin{align}\label{eq.argggg}
	\| \bm{\theta}_{k+1}\|_q^{q-1} = \| \bm{\theta}_k \|_q^{q-1}  + {\cal O} ( \big| \|\bm{\theta}_k \|_q - \|\bm{\theta}_{k+1} \|_q \big| +  \big| \|\bm{\theta}_k \|_q - \|\bm{\theta}_{k+1} \|_q \big|^2) = \| \bm{\theta}_k \|_q^{q-1}  + {\cal O} \big( \frac{1}{\sqrt{k}}\big) 	
\end{align}
Hence, one can find that the first term on the RHS of \eqref{eq.abs_vk-vk+1} is bounded by ${\cal O}\big( \frac{1}{\sqrt{k}}\big) $. 

Next we focus on the second term of \eqref{eq.abs_vk-vk+1} by considering whether $\theta_k^i$ and $\theta_{k+1}^i$ have different signs.

\textit{Case 1:  $\theta_k^i$ and $\theta_{k+1}^i$ have the same sign.} Then we have
\begin{align}
	\Big|  \theta_k^i |\theta_k^i|^{q-2} -  \theta_{k+1}^i |\theta_{k+1}^i|^{q-2}\Big| = \Big|  |\theta_k^i|^{q-1} -  |\theta_{k+1}^i|^{q-1}\Big| \leq {\cal O} \big( \frac{1}{\sqrt{k}}\big) 	
\end{align}
where the last inequality uses the same mean-value-theorem argument as \eqref{eq.argggg} and the fact $|\theta_k^i| \geq \frac{\lambda }{2}$.

\textit{Case 2:  $\theta_k^i$ and $\theta_{k+1}^i$ have different signs.} We assume $\theta_{k+1}^i \geq 0$ w.l.o.g. In this case, by the update manner of $\bm{\theta}_{k+1}$, we have $| \theta_{k+1}^i| \leq |\delta_k  [\nabla f(\mathbf{y}_k)]_i | = {\cal O}(\delta_k) = {\cal O}(\frac{1}{k})$. This is impossible given the fact $| \theta_{k+1}^i| > \frac{\lambda}{2}$ when $k \geq T$.

Therefore, we have the second term in \eqref{eq.abs_vk-vk+1} bounded by ${\cal O}( \frac{1}{\sqrt{k}})$. Hence, it is easy to see that
\begin{align*}
	\| \mathbf{v}_{k+1} - \mathbf{v}_k \|^2 = {\cal O}\Big(  \frac{1}{k} \Big).
\end{align*}

Applying the same argument in the proof of Theorem \ref{thm.afw_acc}, we have that when $k \geq T$, $\xi_{k+1} = \tilde{\cal O} (\frac{1}{k^2})$. This further implies $f(\mathbf{x}_k) - f(\mathbf{x}^*)  = \tilde{\cal O} (\frac{1}{k^2})$ as well.

%%%%%%%%%%%%%%%%%%%%%%%%%%%%%%%%%%%%%%%%%%%%%%%%%%%%%%%%%%%%%%%%%%%%%%%%%%%%%%%%%%%%%%%%%%%
\section{Numerical tests}\label{apdx.last}
All numerical experiments are performed using Python 3.7 on an Intel i7-4790CPU @3.60 GHz (32 GB RAM) desktop.

\subsection{Binary classification}

The datasets used for logistic regression are listed in Table. \ref{tab.dataset}. 

\begin{table}[H]
\centering 
\caption{A summary of datasets used in numerical tests}\label{tab.dataset}
 \begin{tabular}{ c*{3}{|c}} 
    \hline
Dataset  & $d$  & $n$ (train)  & nonzeros  \\ \hline
%%%%%%%  %%%%%%%%
\textit{a9a}  & $123$ & $32,561$ &  $11.28\%$ \\ \hline
%%%%%%%  %%%%%%%%
\textit{covtype} &  $54$  & $406,709$  &  $22.12\%$ \\ \hline
%%%%%%%  %%%%%%%%
\textit{mushroom} &   $122$ & $8,124$ & $18.75\%$ \\ \hline
%%%%%%%  %%%%%%%%
\textit{mnist} (digit $4$) &   $784$ & $60,000$ & $12.4\%$ \\ \hline
%%%%%%%  %%%%%%%%
\end{tabular} 
\end{table}

\subsection{Matrix completion}\label{apdx.mtrx_completion}
Besides the projection-free property, FW and AFW are more suitable for problem \eqref{eq.mtrx_complete_relax} compared to GD/AGM because they also guarantee ${\rm rank }(\mathbf{X}_k) \leq k+1$ \citep{harchaoui2015,freund2017}. Take FW in Alg. \ref{alg.fw} for example. First it is clear that $\nabla f(\mathbf{X}_k) = (\mathbf{X}_k - \mathbf{A})_{\cal K}$. Suppose the SVD of $\nabla f(\mathbf{X}_k)$ is given by $\nabla f(\mathbf{X}_k) = \mathbf{P}_k\mathbf{\Sigma}_k\mathbf{Q}_k^\top$. Then the FW step can be solved easily by
\begin{align}\label{eq.fw_step_mtrx}
	\bm{V}_{k+1} = - R \mathbf{p}_k \mathbf{q}_k^\top
\end{align}
where $\mathbf{p}_k$ and $\mathbf{q}_k$ denote the left and right singular vectors corresponding to the largest singular value of $\nabla f(\mathbf{X}_k)$, respectively. Clearly $\bm{V}_{k+1}$ in \eqref{eq.fw_step_mtrx} has rank at most $1$. Hence it is easy to see $\mathbf{X}_{k+1} = (1-\delta_k)\mathbf{X}_{k}+ \delta_k \bm{V}_{k+1}$ has rank at most $k+2$ if $\mathbf{X}_k$ is a rank-$(k+1)$ matrix (i.e., $\mathbf{X}_0$ has rank $1$).  
Using similar arguments, AFW also ensures ${\rm rank }(\mathbf{X}_k) \leq k+1$. Therefore, the low rank structure is directly promoted by FW variants, and a faster convergence in this case implies a guaranteed lower rank $\mathbf{X}_k$.

The dataset used for the test is \textit{MovieLens100K}, where $1682$ movies are rated by $943$ users with $6.30\%$ percent ratings observed. And the initialization and data processing is the same as those used in \citep{freund2017}.

\end{document}